\newcommand\blfootnote[1]{%
  \begingroup
  \renewcommand\thefootnote{}\footnote{#1}%
  \addtocounter{footnote}{-1}%
  \endgroup
}
\title[]{A structure-preserving parametric approximation for anisotropic geometric flows via an $\alpha$-surface energy matrix}
\author[W. Bao]{Weizhu Bao}\thanks{W. Bao (\href{mailto:matbaowz@nus.edu.sg}{\texttt{matbaowz@nus.edu.sg}}), \textsc{Department of Mathematics, National University of Singapore, 119076, Singapore.}}
\author[Y. Li]{Yifei Li}\thanks{Y. Li (\href{mailto:yifei.li@mnf.uni-tuebingen.de}{\texttt{yifei.li@mnf.uni-tuebingen.de}}), \textsc{Mathematisches Institut, Universit\"{a}t T\"{u}bingen, Auf der Morgenstelle 10, 72076, T\"{u}bingen, Germany.}}
\author[W. Ying]{Wenjun Ying}\thanks{W. Ying (\href{mailto:wying@sjtu.edu.cn}{\texttt{wying@sjtu.edu.cn}}), \textsc{School of Mathematical Sciences and Institute of Natural Sciences, Shanghai Jiao Tong University, Shanghai 200240, China.}}
\author[Y. Zhang]{Yulin Zhang}\thanks{Y. Zhang (\href{mailto:yulin.zhang@sjtu.edu.cn}{\texttt{yulin.zhang@sjtu.edu.cn}}), \textsc{School of Mathematical Sciences and Institute of Natural Sciences, Shanghai Jiao Tong University, Shanghai 200240, China.}}
\begin{document}

\begin{abstract}
	In this paper, we propose a structure-preserving parametric approximation for curvature flows with general anisotropic effects. By introducing a hyperparameter $\alpha$, we construct a surface energy matrix $\hat{\boldsymbol{G}}_k^\alpha(\theta)$, which encompasses all existing and potential formulations into a unified form. A fully discrete parametric approximation for anisotropic curvature flows, which exactly preserves the area decay rate, is proposed based on this unified construction. A local energy estimate-based analytical framework is adopted to provide a comprehensive proof of energy stability for all variants of the fully discrete schemes, and it is shown that $\alpha=-1$ is the unique choice that achieves the optimal energy stability condition $3\hat{\gamma}(\theta)\geq\hat{\gamma}(\theta-\pi)$. A novel perspective is proposed for general anisotropic curvature flow by interpreting the normal velocity as a mapping dependent on both geometric quantities and the underlying curve. This viewpoint gives rise to a natural and unified discretization framework in which energy stability is consistently ensured. 
\end{abstract}

\blfootnote{
\textbf{\textit{Keywords---}}geometric flows, parametric finite element method, anisotropy surface energy, structure-preserving, area conservation, energy-stable.
}

\blfootnote{\textbf{\textit{MSC}}:	65M60, 65M12, 35K55, 53C44
}

 \normalsize 

\maketitle


\section{Introduction} 

Curvature-driven evolution of curves and surfaces is fundamental to applications in image processing \cite{alvarez1993axioms,clarenz2000anisotropic,sapiro1994affine}, materials science \cite{fonseca2014shapes,einstein2015equilibrium,gurtin2002interface,randolph2007controlling,taylor1994linking} and solid-state physics \cite{jiang2012phase,thompson2012solid,jiang2016solid,jiang2018solid,ye2010mechanisms}. In crystalline materials, the underlying lattice structure naturally leads to the direction-dependent surface energy density. Such anisotropic effects are especially important when studying the evolution of crystal shapes or thin films, where the interfacial dynamics are strongly influenced by the material's internal symmetry. Understanding and characterizing such anisotropic effects on the evolution of curves and surfaces is therefore crucial for both theoretical analysis and practical applications.

\begin{figure}[htbp]
  \centering
  \includegraphics[width=0.618\textwidth]{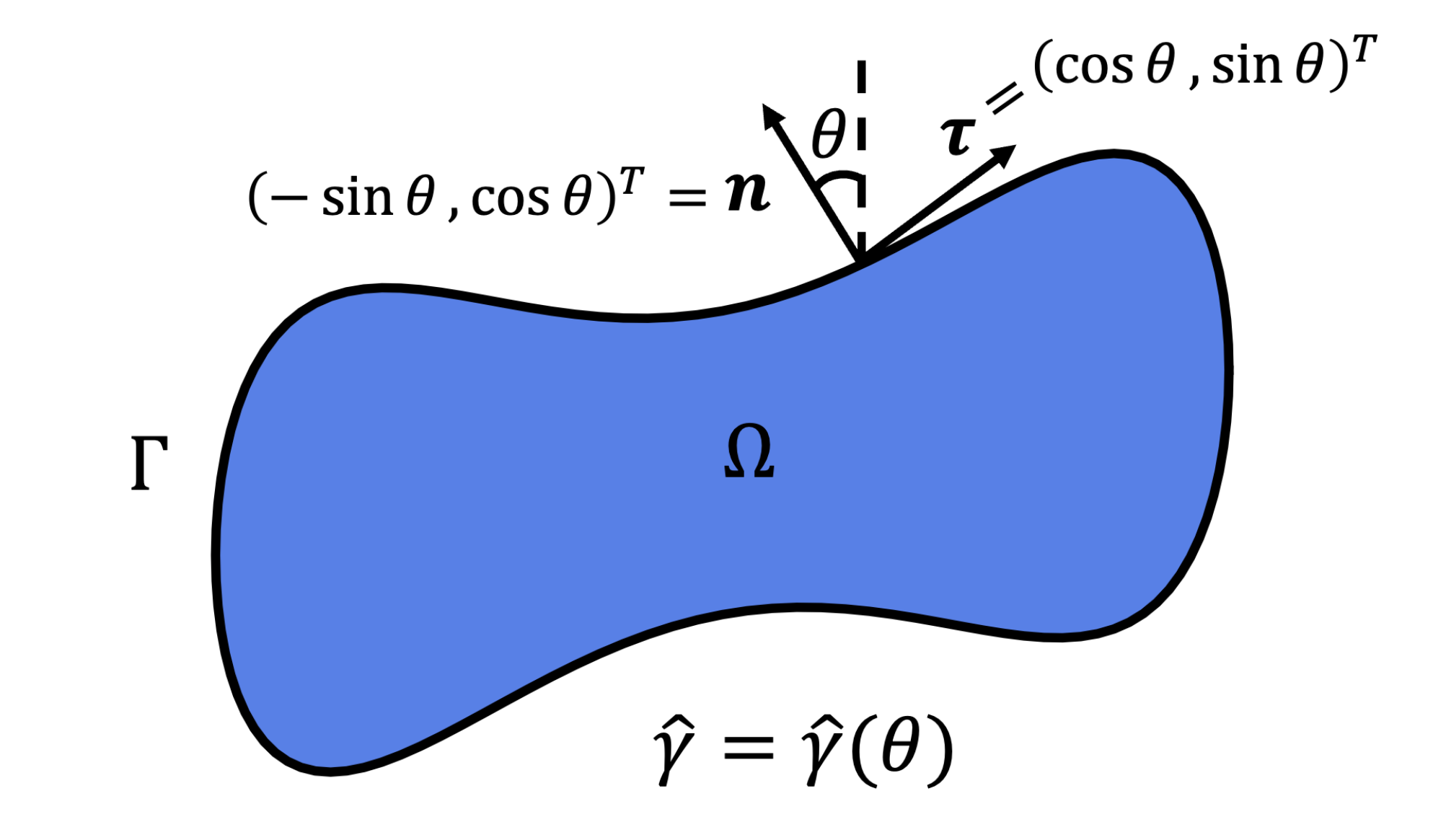}
  \caption{An illustration of an evolving closed curve with an anisotropic surface energy density $\hat{\gamma}(\theta)$. Here, $\theta$ denotes the angle between the $y$-axis and the unit outward normal vector $\boldsymbol{n}=\boldsymbol{n}(\theta)\coloneqq(-\sin\theta,\cos\theta)^T$. The unit tangent vector is $\boldsymbol{\tau}=\boldsymbol{\tau}(\theta)\coloneqq(\cos\theta,\sin\theta)^T$.}
  \label{fig:evolvingcurve}
\end{figure}

As illustrated in Figure~\ref{fig:evolvingcurve}, suppose $\Gamma\coloneqq\Gamma(t)\subset\mathbb{R}^2$ is an evolving closed two-dimensional (2D) curve associated with a given anisotropic surface energy density $\hat{\gamma}(\theta)>0$, where $\theta\in 2\pi\mathbb{T}\coloneqq\mathbb{R}/2\pi\mathbb{Z}$ is the angle between the $y$-axis and the unit outward normal vector $\boldsymbol{n}=\boldsymbol{n}(\theta)\coloneqq(-\sin\theta,\cos\theta)^T$. The evolution is driven by the weighted curvature $\mu\coloneqq\kappa_{\gamma}=\left[\hat{\gamma}(\theta)+\hat{\gamma}^{\prime\prime}(\theta)\right]\kappa$ given by \cite{taylor1992ii}, where $\kappa$ is the classical curvature. This weighted curvature can also be viewed as the first variation of the total free energy $W(\Gamma)$ defined by
\begin{equation}
  W(\Gamma)\coloneqq\int_{\Gamma}\hat{\gamma}(\theta)\,\mathrm{d}s,
\end{equation}
that is,
\begin{equation}\label{eqn:weighted curvature}
  \mu=\frac{\delta W(\Gamma)}{\delta\Gamma}=\lim_{\varepsilon\to0}\frac{W(\Gamma^\varepsilon)-W(\Gamma)}{\varepsilon},
\end{equation}
where $\Gamma^\varepsilon$ is a small perturbation of $\Gamma$. When there is no anisotropic effect, i.e. $\hat{\gamma}(\theta)\equiv\text{const}$, $\mu$ reduces to $\mu=\kappa$.

Consider the anisotropic geometric evolution of closed curves in $\mathbb{R}^2$ with normal velocity $V_n$. Several well-known anisotropic geometric flows, including the anisotropic curvature flow, area-conserved anisotropic curvature flow, and anisotropic surface diffusion, are given by: \begin{equation}\label{eqn:geo flows}
  V_n=\left\{\begin{array}{ll}
    -\mu, & \text{anisotropic curvature flow},\\
    -\mu+\lambda, & \text{area-conserved anisotropic curvature flow},\\
    \partial_{ss}\mu, & \text{anisotropic surface diffusion},
  \end{array}\right.
\end{equation} where $\lambda\coloneqq\int_{\Gamma}\mu\,\mathrm{d}s/|\Gamma|$ is the Lagrange multiplier ensuring that the area of the region enclosed by $\Gamma$ is conserved. The anisotropic geometric flows are related to gradient flows of anisotropic energy functionals such as $W(\Gamma)$, and therefore typically exhibit geometric properties such as energy dissipation and area conservation. Given the wide and profound applications of anisotropic geometric flows, developing a systematic framework of structure-preserving numerical schemes becomes particularly important.

Various numerical methods for curvature-driven problems have been conducted in the past few decades. For example, the level-set method \cite{burger2007level,osher2001level,maxwell2025level}, the phase-field method \cite{du2020phase,brassel2011modified}, the marker particle method \cite{du2010tangent,wang2016modeling}, the finite element method \cite{deckelnick2005computation,deckelnick2005fully}, the evolving surface finite element method (ESFEM) \cite{kovacs2019convergent,kovacs2021convergent,hu2022evolving}, and the parametric finite element method (PFEM) \cite{dziuk1990algorithm,barrett2007variational,li2021energy,bao2017parametric,dziuk1994convergence}. Among these approaches, the energy-stable PFEM (ES-PFEM) proposed by Barrett, Garcke, and N{\"u}rnberg \cite{barrett2007variational,barrett2008parametric}, commonly referred to as the BGN method, has gained significant attention owing to its unconditional energy stability and favorable mesh quality. The BGN method was successfully applied to a wide variety of isotropic curvature-driven problems, including the mean curvature flow \cite{barrett2008parametric}, the surface diffusion \cite{barrett2007parametric}, the multiphase flow \cite{garcke2023structure,garcke2025variational}, the Stefan problem \cite{eto2025parametric}, the Mullins-Sekerka problem \cite{barrett2010stable}, and the evolution of open curves in solid-state dewetting \cite{zhao2021energy,bao2023energy}, consistently demonstrating robust computational performance. A key factor behind the success of BGN-type methods is, instead of approximating the curvature $\kappa$ itself, they work with the {\textit{curvature vector}} $\kappa\boldsymbol{n}$ through the geometric identity
\begin{equation}\label{eqn:curvature identity}
  \kappa\boldsymbol{n}=-\partial_{ss}\boldsymbol{X} = -\partial_s\Bigl(\boldsymbol{I}_d\partial_s\boldsymbol{X}\Bigr),
\end{equation}
here $\boldsymbol{I}_d$ is the $d$-dimensional identity matrix. This approach provides a natural framework for achieving energy stability in isotropic curvature-driven problems.  For more detailed discussions of the BGN-type methods, we refer the reader to the comprehensive review \cite{barrett2020parametricbook} by Barrett et al. 


There have been numerous attempts to extend the ES-PFEM for isotropic geometric flows to the anisotropic setting. A common feature of these approaches is introducing a suitable surface energy matrix in place of $\boldsymbol{I}_d$ in \eqref{eqn:curvature identity}, yielding an analogous identity for the weighted curvature vector $\mu\boldsymbol{n}$. Barrett et al.\ first achieved this for Riemannian-like surface energies $\hat{\gamma}(\theta)=\left(\boldsymbol{n}(\theta)\cdot G\boldsymbol{n}(\theta)\right)^{1/2}$ using the matrix $\frac{1}{\hat{\gamma}(\theta)}\text{det}(G)\,G^{-1}$ \cite{barrett2008numerical,barrett2010parametric}. In \cite{li2021energy}, Li and Bao constructed a surface energy matrix $\hat{\boldsymbol{G}}(\theta)=\hat{\gamma}(\theta)I_2-\boldsymbol{n}(\theta)\boldsymbol{\xi}(\theta)^T+\boldsymbol{\xi}(\theta)\boldsymbol{n}(\theta)^T$ based on the Cahn-Hoffman $\boldsymbol{\xi}$-vector, achieving the first extension to general surface energies, though with restrictive conditions on $\hat{\gamma}(\theta)$. In \cite{bao2023symmetrized2D}, Bao, Jiang, and Li introduced a stabilizing function $k(\theta)$ and a symmetric surface energy matrix $\hat{\boldsymbol{Z}}_k(\theta)=\hat{\gamma}(\theta)I_2-\boldsymbol{n}\boldsymbol{\xi}^T-\boldsymbol{\xi}\boldsymbol{n}^T+k(\theta)\boldsymbol{n}\boldsymbol{n}^T$, proving unconditional energy stability under the symmetry condition $\hat{\gamma}(\theta)=\hat{\gamma}(\theta-\pi)$. Through refined analysis \cite{bao2025structure,li2025structure}, this condition was later relaxed to $3\hat{\gamma}(\theta)\geq\hat{\gamma}(\theta-\pi)$. Similar stabilization techniques were also applied to $\hat{\boldsymbol{G}}(\theta)$, significantly improving its original stability conditions \cite{bao2024structure,zhang2025stabilized}.

The above studies reveal that different formulations of the surface energy matrix lead to different analyses. Moreover, the energy stability conditions for the resulting discrete schemes vary significantly and remain to be further refined. The main objective of this paper is to provide a unified analysis for the energy stability of SP-PFEMs that apply to all possible formulations of the surface energy matrix, through which we derive optimal energy stability conditions, and to conduct a systematic comparison of their computational performance. Our main contributions are as follows:
\begin{itemize}
  \item \textbf{Unified surface energy matrix.} We introduce a hyperparameter $\alpha\in\mathbb{R}$ and construct the unified $\alpha$-surface energy matrix
  \begin{equation}
    \hat{\boldsymbol{G}}_k^\alpha(\theta)\coloneqq\hat{\gamma}(\theta)I_2-\boldsymbol{n}\boldsymbol{\xi}^T+\alpha\boldsymbol{\xi}\boldsymbol{n}^T+k(\theta)\boldsymbol{n}\boldsymbol{n}^T,
  \end{equation}
  which encompasses all existing formulations \cite{bao2025unified,zhang2025stabilized,bao2023symmetrized2D,li2025structure} as special cases and exhausts all potential constructions (see Remark~\ref{eqn:unity of surf mat}).
  
  \item \textbf{Optimal energy stability conditions.} We establish that the symmetric choice $\alpha=-1$ is the only formulation achieving unconditional energy stability under the necessary and sufficient condition (see Remark~\ref{rmk:optimality of lee}).
  \begin{equation}\label{eqn:common cond}
   3\hat{\gamma}(\theta)-\hat{\gamma}(\theta-\pi)\geq 0,\qquad\forall\theta\in 2\pi\mathbb{T}.
  \end{equation}
  All other formulations require the strictly stronger condition $3\hat{\gamma}(\theta)-\hat{\gamma}(\theta-\pi)>0$ unless additional constraints (e.g., $\hat{\gamma}^\prime(\theta^*)=0$) are imposed.
  
  \item \textbf{Unified velocity discretization.} We extend the SP-PFEM framework to general anisotropic curvature-driven problems with normal velocity of the form
  \begin{equation}
    V_n=\mathfrak{F}(\mu),
  \end{equation}
  where $\mathfrak{F}$ is a mapping depending on the weighted curvature $\mu$. This formulation naturally encompasses all velocities in \eqref{eqn:geo flows} and yields a unified discretization framework that ensures energy stability.
\end{itemize}
In addition, we conduct extensive numerical experiments to demonstrate the computational efficiency of the proposed method and investigate the effect of the parameter $\alpha$. The results demonstrate that the method effectively captures anisotropic curve evolution with robustness across different values of $\alpha$. Moreover, the experiments reveal several interesting phenomena in anisotropic geometric flows.

The structure of this paper is as follows: In section~\ref{sec:geo evo eqn}, we introduce a hyperparameter $\alpha$ to establish a unified construction for all possible surface energy matrices and derive a conservative variational formulation for the anisotropic curvature flow. A full discretization by SP-PFEM is proposed in section~\ref{sec:SP-PFEM}. Concurrently, we state the structure-preserving property of the method. Section~\ref{sec:proof of energy stability} offers a proof of the energy stability of SP-PFEM. Extensions to other anisotropic curvature-driven problems are discussed in section~\ref{sec:generalization}. We report extensive numerical experiments in section~\ref{sec:numer} to validate the accuracy, efficiency, structure-preserving property and robustness of the proposed SP-PFEM. Finally, we conclude the paper in section~\ref{sec:conclusion}.

\section{Anisotropic curvature flow and its variational formulation}\label{sec:geo evo eqn} 

\subsection{The geometric PDE}

Suppose the evolving curve $\Gamma(t)$ is parameterized as $\Gamma(t):\boldsymbol{X}(s,t)=(x(s,t),y(s,t))^T\in\mathbb{R}^2$, where $s$ is the time-dependent arc-length parameter. Then the geomertic evolution equation of the anisotropic curvature flow in \eqref{eqn:geo flows} can be described as follows: \begin{subequations}\label{eqn:original geo evo eqn}
  \begin{align}
    &\partial_t\boldsymbol{X}=-\mu\boldsymbol{n},\qquad 0<s<L(t),\qquad 0\leq t\leq T,\\
    &\mu=\left[\hat{\gamma}(\theta)+\hat{\gamma}^{\prime\prime}(\theta)\right]\kappa.
  \end{align}
\end{subequations} Here, $L(t)=|\Gamma(t)|$ denotes the length of $\Gamma(t)$, $T$ represents the maximum existing time. 

It is noted that during the curve evolution, the velocity component in the tangential direction only affects the parameterization of the curve, without altering its geometric shape. Consequently, it suffices to prescribe the normal velocity $V_n=\boldsymbol{n}\cdot\partial_t\boldsymbol{X}$ in the normal direction. Building on this observation, by allowing tangential motion, an equivalent formulation of the anisotropic curvature flow to that in \eqref{eqn:original geo evo eqn} can be stated as the following geometric PDE: \begin{subequations}\label{eqn:geo PDE}
  \begin{align}
    &\boldsymbol{n}\cdot\partial_t\boldsymbol{X}=-\mu,\qquad 0<s<L(t)\qquad 0\leq t\leq T,\label{eqn:aniso curvature flow}\\
    &\mu=\left[\hat{\gamma}(\theta)+\hat{\gamma}^{\prime\prime}(\theta)\right]\kappa.\label{eqn:repres of weighted curvature}
  \end{align}
\end{subequations} 

\subsection{A unified $\alpha$-surface energy matrix}

To derive a conservative formulation for the anisotropic curvature flow \eqref{eqn:geo PDE}, the following unified $\alpha$-surface energy matrix is introduced: \begin{equation}\label{eqn:surf energy mat}
  \hat{\boldsymbol{G}}_k^\alpha(\theta)\coloneqq\hat{\gamma}(\theta)I_2-\boldsymbol{n}(\theta)\boldsymbol{\xi}(\theta)^T+\alpha\boldsymbol{\xi}(\theta)\boldsymbol{n}(\theta)^T+k(\theta)\boldsymbol{n}(\theta)\boldsymbol{n}(\theta)^T,
\end{equation} with $\boldsymbol{\xi}(\theta)=\hat{\gamma}(\theta)\boldsymbol{n}(\theta)-\hat{\gamma}^\prime(\theta)\boldsymbol{\tau}(\theta)$ being the Cahn-Hoffman $\boldsymbol{\xi}$-vector, $\alpha\in\mathbb{R}$ and $k\colon 2\pi\mathbb{T}\to\mathbb{R}$ is a pre-determined stablizing function.

\begin{lemma}
  For the weighted curvature $\mu$ defined in \eqref{eqn:weighted curvature}, the following geomertic identity holds: \begin{equation}\label{eqn:weighted curvature identity}
    \mu\boldsymbol{n}+\partial_s\Bigl(\hat{\boldsymbol{G}}_k^\alpha(\theta)\partial_s\boldsymbol{X}\Bigr)=\boldsymbol{0},\qquad\forall \alpha\in\mathbb{R}.
  \end{equation}
\end{lemma}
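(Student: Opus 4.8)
The plan is to verify \eqref{eqn:weighted curvature identity} as a pointwise identity along the smooth curve, working in the arc-length parametrization; no variational argument beyond the representation $\mu=[\hat{\gamma}(\theta)+\hat{\gamma}^{\prime\prime}(\theta)]\kappa$ of \eqref{eqn:repres of weighted curvature} is needed. The geometric facts I would use are $\partial_s\boldsymbol{X}=\boldsymbol{\tau}$, the Frenet-type relations $\partial_s\boldsymbol{\tau}=-\kappa\boldsymbol{n}$ and $\partial_s\boldsymbol{n}=\kappa\boldsymbol{\tau}$, the angle derivative $\partial_s\theta=-\kappa$ (all consistent with $\kappa\boldsymbol{n}=-\partial_{ss}\boldsymbol{X}$ of \eqref{eqn:curvature identity} and the sign convention $\boldsymbol{n}=(-\sin\theta,\cos\theta)^T$, $\boldsymbol{\tau}=(\cos\theta,\sin\theta)^T$ of Fig.~\ref{fig:evolvingcurve}), together with $\partial_\theta\boldsymbol{\tau}=\boldsymbol{n}$ and $\partial_\theta\boldsymbol{n}=-\boldsymbol{\tau}$.

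The first step is to collapse $\hat{\boldsymbol{G}}_k^\alpha(\theta)\partial_s\boldsymbol{X}$. Since $\partial_s\boldsymbol{X}=\boldsymbol{\tau}$, I would apply \eqref{eqn:surf energy mat} termwise: because $\boldsymbol{n}^T\boldsymbol{\tau}=0$ by orthonormality, the summands $\alpha\boldsymbol{\xi}\boldsymbol{n}^T$ and $k(\theta)\boldsymbol{n}\boldsymbol{n}^T$ annihilate $\boldsymbol{\tau}$ --- which is exactly why the identity holds for every $\alpha$ and every stabilizing function $k$. For the remaining $-\boldsymbol{n}\boldsymbol{\xi}^T$ term, $\boldsymbol{\xi}(\theta)^T\boldsymbol{\tau}=(\hat{\gamma}\boldsymbol{n}-\hat{\gamma}^\prime\boldsymbol{\tau})^T\boldsymbol{\tau}=-\hat{\gamma}^\prime(\theta)$, so one is left with the compact expression $\hat{\boldsymbol{G}}_k^\alpha(\theta)\partial_s\boldsymbol{X}=\hat{\gamma}(\theta)\boldsymbol{\tau}+\hat{\gamma}^\prime(\theta)\boldsymbol{n}$, valid for all $\alpha\in\mathbb{R}$.

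The second step is to differentiate this in $s$ and recognize $-\mu\boldsymbol{n}$. By the chain rule, $\partial_s(\hat{\gamma}\boldsymbol{\tau}+\hat{\gamma}^\prime\boldsymbol{n})=(\partial_s\theta)\,\partial_\theta(\hat{\gamma}\boldsymbol{\tau}+\hat{\gamma}^\prime\boldsymbol{n})$; expanding $\partial_\theta(\hat{\gamma}\boldsymbol{\tau})=\hat{\gamma}^\prime\boldsymbol{\tau}+\hat{\gamma}\boldsymbol{n}$ and $\partial_\theta(\hat{\gamma}^\prime\boldsymbol{n})=\hat{\gamma}^{\prime\prime}\boldsymbol{n}-\hat{\gamma}^\prime\boldsymbol{\tau}$, the $\boldsymbol{\tau}$-components cancel and the $\boldsymbol{n}$-components sum to $[\hat{\gamma}(\theta)+\hat{\gamma}^{\prime\prime}(\theta)]\boldsymbol{n}$. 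Hence $\partial_s(\hat{\boldsymbol{G}}_k^\alpha(\theta)\partial_s\boldsymbol{X})=(\partial_s\theta)[\hat{\gamma}+\hat{\gamma}^{\prime\prime}]\boldsymbol{n}=-\kappa[\hat{\gamma}+\hat{\gamma}^{\prime\prime}]\boldsymbol{n}=-\mu\boldsymbol{n}$ by \eqref{eqn:repres of weighted curvature}, which rearranges to \eqref{eqn:weighted curvature identity}.

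I do not expect a genuine obstacle here: the computation is short and elementary. The only point requiring care is keeping the sign conventions consistent --- in particular $\partial_s\theta=-\kappa$ rather than $+\kappa$, and the signs in $\partial_\theta\boldsymbol{n}$ and $\partial_\theta\boldsymbol{\tau}$ --- since an inconsistent choice flips the sign of the final term. As a sanity check one may also observe that $\hat{\gamma}(\theta)\boldsymbol{\tau}+\hat{\gamma}^\prime(\theta)\boldsymbol{n}$ is the image of the Cahn-Hoffman vector $\boldsymbol{\xi}(\theta)$ under the rotation $(x,y)\mapsto(y,-x)$, which gives a coordinate-free shortcut for the first step.
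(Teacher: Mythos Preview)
Your proof is correct and follows essentially the same route as the paper: both reduce the claim to the identity $\hat{\boldsymbol{G}}_k^\alpha(\theta)\partial_s\boldsymbol{X}=\hat{\gamma}(\theta)\boldsymbol{\tau}+\hat{\gamma}^\prime(\theta)\boldsymbol{n}$ via the observation that $\boldsymbol{n}^T\boldsymbol{\tau}=0$ annihilates the $\alpha\boldsymbol{\xi}\boldsymbol{n}^T$ and $k(\theta)\boldsymbol{n}\boldsymbol{n}^T$ terms. The only difference is that the paper then cites the identity $\mu\boldsymbol{n}=-\partial_s(\hat{\gamma}\boldsymbol{\tau}+\hat{\gamma}^\prime\boldsymbol{n})$ from \cite{jiang2019sharp,zhang2025stabilized}, whereas you derive it directly from the Frenet relations and $\partial_s\theta=-\kappa$, making your argument self-contained.
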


\begin{proof}
  From \cite{jiang2019sharp} or \cite[Theorem 2.1]{zhang2025stabilized}, it is known that \begin{equation}\label{eqn:weighted curvature vec}
    \mu\boldsymbol{n}=-\partial_s\Bigl(\hat{\gamma}(\theta)\partial_s\boldsymbol{X}+\hat{\gamma}^\prime(\theta)\boldsymbol{n}\Bigr).
  \end{equation} Thus it remains to prove that \begin{equation}
    \hat{\boldsymbol{G}}_k^\alpha(\theta)\partial_s\boldsymbol{X}=\hat{\gamma}(\theta)\partial_s\boldsymbol{X}+\hat{\gamma}^\prime(\theta)\boldsymbol{n}.
  \end{equation} Noting that $\boldsymbol{n}=(-\sin\theta,\cos\theta)^T,\boldsymbol{\tau}=(\cos\theta,\sin\theta)^T$, we have \begin{equation}\label{eqn:Gk}
    \hat{\boldsymbol{G}}_k^\alpha(\theta)=\hat{\gamma}(\theta)I_2+\hat{\gamma}^\prime(\theta)(\boldsymbol{n}\boldsymbol{\tau}^T-\alpha\boldsymbol{\tau}\boldsymbol{n}^T)+\left(k(\theta)+(\alpha-1)\hat{\gamma}(\theta)\right)\boldsymbol{n}\boldsymbol{n}^T.
  \end{equation}
  
  Combining \eqref{eqn:Gk} with the facts $\partial_s\boldsymbol{X}=\boldsymbol{\tau}$ and $\boldsymbol{n}^T\partial_s\boldsymbol{X}\equiv 0$ yields \begin{equation}\label{eqn:surf mat act on grad X}
    \begin{aligned}
      \hat{\boldsymbol{G}}^\alpha_k(\theta)\partial_s\boldsymbol{X}&=\hat{\gamma}(\theta)\partial_s\boldsymbol{X}+\hat{\gamma}^\prime(\theta)\boldsymbol{n}+\left(-\alpha\hat{\gamma}^\prime(\theta)\boldsymbol{\tau}\boldsymbol{n}^T+\left(k(\theta)+(\alpha-1)\hat{\gamma}(\theta)\right)\boldsymbol{n}\boldsymbol{n}^T\right)\partial_s\boldsymbol{X}\\
      &=\hat{\gamma}(\theta)\partial_s\boldsymbol{X}+\hat{\gamma}^\prime(\theta)\boldsymbol{n}.
    \end{aligned}
  \end{equation} This proves the lemma.
\end{proof}

Applying the identity \eqref{eqn:weighted curvature identity}, a \textit{strong formulation} for the geometric PDE \eqref{eqn:aniso curvature flow}--\eqref{eqn:repres of weighted curvature} is expressed as follows: \begin{subequations}\label{eqn:strong form}
  \begin{align}
    &\boldsymbol{n}\cdot\partial_t\boldsymbol{X}+\mu=0,\qquad 0<s<L(t),\qquad\forall 0\leq t\leq T,\label{eqn:strong form a}\\
    &\mu\boldsymbol{n}+\partial_s\Bigl(\hat{\boldsymbol{G}}_k^\alpha(\theta)\partial_s\boldsymbol{X}\Bigr)=\boldsymbol{0},\label{eqn:strong form b}
  \end{align}
\end{subequations} where $L(t)$ is the length of the evolving curve $\Gamma(t)$.

\begin{remark}
  When $\hat{\gamma}(\theta)\equiv 1$, the weighted curvature $\mu$ reduces to the classical curvature $\kappa$, and by taking $k(\theta)\equiv 1-\alpha$, we have the surface energy matrix $\hat{\boldsymbol{G}}_k^\alpha(\theta)\equiv I_2$. Thus \eqref{eqn:strong form} will reduce to the standard formulation by BGN method for mean curvature flow \cite{barrett2007variational}.
\end{remark}

\begin{remark}
  By selecting different parameter $\alpha$, the strong form \eqref{eqn:strong form} will generate different formulations for the weighted curvature $\mu$. For example, when $\alpha=-1$, it offers the symmetrized formulations in \cite{bao2023symmetrized2D,li2025structure}; by setting $\alpha=0,k(\theta)\equiv 0$, we will obtain the formulation proposed in \cite{barrett2008variational}; and it will lead to the formulations in \cite{li2021energy,bao2024structure,zhang2025stabilized} by choosing $\alpha=1$.
\end{remark}

\begin{remark}\label{eqn:unity of surf mat}
  For any surface energy matrix $\hat{\boldsymbol{G}}(\theta)$ satisfying $\mu\boldsymbol{n}=-\partial_s\left(\hat{\boldsymbol{G}}(\theta)\partial_s\boldsymbol{X}\right)$, it can be obtained by $\hat{\boldsymbol{G}}_k^\alpha(\theta)$ in \eqref{eqn:surf energy mat}. To see this, consider $K(\boldsymbol{\tau})\coloneqq\left\{\boldsymbol{A}\in\mathbb{R}^{2\times 2}\mid \boldsymbol{A}\boldsymbol{\tau}=\boldsymbol{0}\right\}$ with $\text{dim}\,K(\boldsymbol{\tau})=2$ and $\boldsymbol{\xi}\boldsymbol{n}^T,\boldsymbol{n}\boldsymbol{n}^T \in K(\boldsymbol{\tau})$. For the anisotropic case, as $\boldsymbol{\xi}\nparallel\boldsymbol{n}$, $K(\boldsymbol{\tau})=\{\alpha\boldsymbol{\xi}\boldsymbol{n}^T+k\boldsymbol{n}\boldsymbol{n}^T\mid \alpha,k\in\mathbb{R}\}$. Since $\mu \boldsymbol{n}=-\partial_s\left(\hat{\boldsymbol{G}}(\theta)\partial_s\boldsymbol{X}\right) = -\partial_s\left(\hat{\boldsymbol{G}}_0^0(\theta)\partial_s\boldsymbol{X}\right)$, we know that $\hat{\boldsymbol{G}}(\theta)-\hat{\boldsymbol{G}}_0^0(\theta)\in K(\boldsymbol{\tau})$. Therefore, we can deduce that $\hat{\boldsymbol{G}}(\theta)\in \hat{\boldsymbol{G}}_0^0(\theta)+K(\boldsymbol{\tau})=\{\hat{\boldsymbol{G}}_k^\alpha(\theta)\mid \alpha,k\in\mathbb{R}\}$. This shows that all possible surface energy matrix can be expressed in the form of \eqref{eqn:surf energy mat}. 
\end{remark}

\subsection{Variational formulation}

To obtain a variational formulation based on the strong form \eqref{eqn:strong form}, we suppose the evolving $\Gamma(t)$ is parametrized by a time-independent parameter $\rho$ over a fixed domain $\mathbb{I}\coloneqq[0,1]$, i.e. \begin{equation}
  \Gamma(t)\colon \boldsymbol{X}(\rho,t)=(x(\rho,t),y(\rho,t))^T,\qquad \forall \rho\in\mathbb{I},\,\,t\in[0,T].
\end{equation} Thus the arc-length parameterization can be computed as $s(\rho,t)=\int_0^\rho|\partial_r\boldsymbol{X}(r,t)|\,\mathrm{d}r$. In this paper, we make no distinction between $\boldsymbol{X}(s,t)$ and $\boldsymbol{X}(\rho,t)$ and assume the parametrization by $\rho$ is always regular, i.e. $\frac{1}{C}\leq |\partial_rs(\rho,t)|\leq C,\,\,\forall\rho\in\mathbb{I}$ for a constant $C>1$.

For an evolving curve $\Gamma(t)$, the $L^2$-space with respect to $\Gamma(t)$ is defined as follows: \begin{equation}
  L^2(\mathbb{I})\coloneqq\left\{u\colon\mathbb{I}\to\mathbb{R}\mid\int_{\Gamma(t)}|u(s)|^2\,\mathrm{d}s=\int_{\mathbb{I}}|u(s(\rho,t))|^2\partial_\rho s\,\mathrm{d}\rho<+\infty\right\},
\end{equation} equipped with the inner product \begin{equation}
  \Bigl(u,v\Bigr)_{\Gamma(t)}\coloneqq\int_{\Gamma(t)}u(s)v(s)\,\mathrm{d}s=\int_{\mathbb{I}}u(s(\rho,t))v(s(\rho,t))\partial_\rho s\,\mathrm{d}\rho,\qquad\forall u,v\in L^2(\mathbb{I}).
\end{equation} And the corresponding Sobolev spaces are given as \begin{subequations}
  \begin{align}
    &H^1(\mathbb{I})\coloneqq\left\{u\in L^2(\mathbb{I})\mid \partial_{\rho}u\in L^2(\mathbb{I})\right\},\\
    &H^1_p(\mathbb{I})\coloneqq \left\{u\in H^1(\mathbb{I})\mid u(0)=u(1)\right\}.
  \end{align}
\end{subequations}

Multiplying test functions $\varphi\in H^1_p(\mathbb{I})$ to \eqref{eqn:strong form a} and $\boldsymbol{\omega}=(\omega_1,\omega_2)^T\in [H^1_p(\mathbb{I})]^2$ to \eqref{eqn:strong form b}, respectively. Then integrating over $\Gamma(t)$ and applying integration by parts, we obtain the \textit{variational formulation} for the strong form \eqref{eqn:strong form} as follows: Suppose the initial closed curve $\Gamma(0)\coloneqq\boldsymbol{X}(\cdot,0)=(x(\cdot,0),y(\cdot,0))^T\in[H^1_p(\mathbb{I})]^2$ and the initial weighted curvature $\mu(\cdot,0)\coloneqq\mu_0(\cdot)\in H_p^1(\mathbb{I})$, for any $t>0$, find the solution $\left(\boldsymbol{X}(\cdot,t)=(x(\cdot,t),y(\cdot,t))^T,\mu(\cdot,t)\right)\in[H^1_p(\mathbb{I})]^2\times H^1_p(\mathbb{I})$ such that \begin{subequations}\label{eqn:var form}
  \begin{align}
    &\Bigl(\boldsymbol{n}\cdot\partial_t\boldsymbol{X},\varphi\Bigr)_{\Gamma(t)}+\Bigl(\mu,\varphi\Bigr)_{\Gamma(t)}=0,\qquad\forall\varphi\in H^1_p(\mathbb{I}),\label{eqn:var form a}\\
    &\Bigl(\mu\boldsymbol{n},\boldsymbol{\omega}\Bigr)_{\Gamma(t)}-\Bigl(\hat{\boldsymbol{G}}_k^\alpha(\theta)\partial_s\boldsymbol{X},\partial_s\boldsymbol{\omega}\Bigr)_{\Gamma(t)}=0,\qquad\forall\boldsymbol{\omega}\in [H^1_p(\mathbb{I})]^2.\label{eqn:var form b}
  \end{align}
\end{subequations}

\subsection{Properties of the variational formulation}

Denote $A(t)$ as the total area enclosed by the evolving curve $\Gamma(t)$, and $W(t)$ as the total interfacial energy, which are formally defined as \begin{equation}
  A(t)\coloneqq\int_{\Gamma(t)}y(s,t)\partial_sx(s,t)\,\mathrm{d}s,\qquad W(t)\coloneqq\int_{\Gamma(t)}\hat{\gamma}(\theta)\,\mathrm{d}s.
\end{equation}

To derive the area decay rate and the energy dissipation rate, we need the time derivative of the inclination angle $\partial_t\theta$ as well as the transport lemma.

\begin{lemma}
  For the time derivative of the inclination angle $\theta$, the following geometric identity holds: \begin{equation}\label{eqn:inclination angle identity}
    \partial_t\theta=\partial_s(\partial_t\boldsymbol{X})\cdot\boldsymbol{n}.
  \end{equation}
\end{lemma}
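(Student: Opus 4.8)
The plan is to compute $\partial_t\theta$ directly from the defining relation between $\theta$ and the geometry of the curve, using the fact that $\boldsymbol{n}=\boldsymbol{n}(\theta)=(-\sin\theta,\cos\theta)^T$ and $\boldsymbol{\tau}=\boldsymbol{\tau}(\theta)=(\cos\theta,\sin\theta)^T$. First I would observe that differentiating the relation $\partial_s\boldsymbol{X}=\boldsymbol{\tau}(\theta)$ with respect to time gives $\partial_t\partial_s\boldsymbol{X}=\partial_t\boldsymbol{\tau}(\theta)=\boldsymbol{\tau}'(\theta)\,\partial_t\theta=\boldsymbol{n}(\theta)\,\partial_t\theta$, since $\boldsymbol{\tau}'(\theta)=(-\sin\theta,\cos\theta)^T=\boldsymbol{n}(\theta)$. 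Dotting both sides with $\boldsymbol{n}$ and using $\boldsymbol{n}\cdot\boldsymbol{n}=1$ then yields $\partial_t\theta=\boldsymbol{n}\cdot\partial_t\partial_s\boldsymbol{X}$. The remaining issue is that $\partial_s$ itself depends on $t$ through the arc-length element, so one must be careful about whether $\partial_t\partial_s\boldsymbol{X}$ equals $\partial_s\partial_t\boldsymbol{X}$.

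To handle the commutator, I would switch to the fixed parameter $\rho\in\mathbb{I}$, writing $\partial_s = \frac{1}{|\partial_\rho\boldsymbol{X}|}\partial_\rho$. Then $\partial_s\boldsymbol{X} = \frac{\partial_\rho\boldsymbol{X}}{|\partial_\rho\boldsymbol{X}|}=\boldsymbol{\tau}$, and differentiating in $t$ (with $\rho$ fixed, so $\partial_t$ and $\partial_\rho$ commute) gives $\partial_t\boldsymbol{\tau}$ as a combination of $\partial_\rho\partial_t\boldsymbol{X}$ and a term proportional to $\partial_\rho\boldsymbol{X}$, i.e. proportional to $\boldsymbol{\tau}$. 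Taking the inner product with $\boldsymbol{n}$ kills the $\boldsymbol{\tau}$-component, leaving $\boldsymbol{n}\cdot\partial_t\boldsymbol{\tau} = \frac{1}{|\partial_\rho\boldsymbol{X}|}\boldsymbol{n}\cdot\partial_\rho\partial_t\boldsymbol{X} = \boldsymbol{n}\cdot\partial_s(\partial_t\boldsymbol{X})$. On the other hand $\boldsymbol{n}\cdot\partial_t\boldsymbol{\tau} = \boldsymbol{n}\cdot(\boldsymbol{n}\,\partial_t\theta) = \partial_t\theta$, which closes the argument. Equivalently, one can phrase this as: the $\boldsymbol{n}$-component of $\partial_t\boldsymbol{\tau}$ is insensitive to the difference between $\partial_t\partial_s$ and $\partial_s\partial_t$ precisely because that difference is a scalar multiple of $\boldsymbol{\tau}=\partial_s\boldsymbol{X}$.

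The main obstacle, modest as it is, is making the commutator bookkeeping rigorous rather than waving at it — specifically, showing cleanly that $\partial_t(\partial_s\boldsymbol{X}) - \partial_s(\partial_t\boldsymbol{X})$ is parallel to $\boldsymbol{\tau}$ so that it contributes nothing after dotting with $\boldsymbol{n}$. I would record the standard identity $\partial_t(\partial_s\boldsymbol{X}) = \partial_s(\partial_t\boldsymbol{X}) - (\partial_s\boldsymbol{X}\cdot\partial_s\partial_t\boldsymbol{X})\,\partial_s\boldsymbol{X}$, which follows from $\partial_t|\partial_\rho\boldsymbol{X}| = |\partial_\rho\boldsymbol{X}|^{-1}\partial_\rho\boldsymbol{X}\cdot\partial_\rho\partial_t\boldsymbol{X}$, and then the second term is manifestly parallel to $\partial_s\boldsymbol{X}=\boldsymbol{\tau}$ and hence orthogonal to $\boldsymbol{n}$. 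Combining this with $\partial_t\theta = \boldsymbol{n}\cdot\partial_t(\partial_s\boldsymbol{X})$ gives $\partial_t\theta = \boldsymbol{n}\cdot\partial_s(\partial_t\boldsymbol{X})$, which is \eqref{eqn:inclination angle identity}.
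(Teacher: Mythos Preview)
Your proposal is correct and follows essentially the same approach as the paper: both arguments pass to the fixed parameter $\rho$, use the identity $\partial_t|\partial_\rho\boldsymbol{X}| = \partial_s\boldsymbol{X}\cdot\partial_s(\partial_t\boldsymbol{X})\,|\partial_\rho\boldsymbol{X}|$, expand $\partial_\rho\boldsymbol{X}=|\partial_\rho\boldsymbol{X}|(\cos\theta,\sin\theta)^T$, and project onto $\boldsymbol{n}$. The only difference is organizational---you isolate the commutator $\partial_t\partial_s-\partial_s\partial_t$ and argue it is tangential, while the paper expands $\partial_s(\partial_t\boldsymbol{X})$ directly into its $\boldsymbol{\tau}$- and $\boldsymbol{n}$-components; the underlying computation is identical.
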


\begin{proof}
  Firstly, consider the time derivative of $|\partial_\rho\boldsymbol{X}|=\sqrt{(\partial_\rho x)^2+(\partial_\rho y)^2}$. Then \begin{equation}\label{eqn:time derivative of |grad X|}
    \begin{aligned}
      \partial_t|\partial_\rho\boldsymbol{X}|&=\frac{\partial_{\rho} x\partial_t(\partial_{\rho} x)+\partial_{\rho} y\partial_t(\partial_{\rho} y)}{\sqrt{(\partial_\rho x)^2+(\partial_\rho y)^2}}=\frac{\partial_{\rho}\boldsymbol{X}}{|\partial_\rho\boldsymbol{X}|}\cdot\frac{\partial_\rho(\partial_t\boldsymbol{X})}{|\partial_\rho\boldsymbol{X}|}|\partial_\rho\boldsymbol{X}|\\
      &=\partial_s\boldsymbol{X}\cdot\partial_s(\partial_t\boldsymbol{X})|\partial_{\rho}\boldsymbol{X}|.
    \end{aligned}
  \end{equation} Therefore, by \eqref{eqn:time derivative of |grad X|}, \begin{equation}
    \begin{aligned}
      \partial_s(\partial_t\boldsymbol{X})&=\frac{1}{|\partial_\rho\boldsymbol{X}|}\partial_\rho(\partial_t\boldsymbol{X})=\frac{1}{|\partial_\rho\boldsymbol{X}|}\partial_t\left(|\partial_\rho\boldsymbol{X}|(\cos\theta,\sin\theta)^T\right)\\
      &=\partial_s\boldsymbol{X}\cdot\partial_s(\partial_t\boldsymbol{X})(\cos\theta,\sin\theta)^T+(-\sin\theta,\cos\theta)^T\partial_t\theta.
    \end{aligned}
  \end{equation} Combining with the fact that $\boldsymbol{n}=(-\sin\theta,\cos\theta)^T$ gives the desired result.
\end{proof}

\begin{lemma}[Transport lemma, \cite{zhang2025stabilized}]\label{lma:transport lma}
  Suppose $\Gamma(t)$ is a two-dimensional piecewise $C^1$ curve parameterized by $\boldsymbol{X}(\rho,t)$, function $f:\Gamma(t)\times\mathbb{R}^+\to\mathbb{R}$ is differentiable. Then \begin{equation}
    \frac{\mathrm{d}}{\mathrm{d}t}\int_{\Gamma(t)}f\,\mathrm{d}s=\int_{\Gamma(t)}\partial_tf+f\partial_s(\partial_t\boldsymbol{X})\cdot\partial_s\boldsymbol{X}\,\mathrm{d}s.
  \end{equation}
\end{lemma}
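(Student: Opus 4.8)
The statement is the standard transport theorem for an evolving curve, and the plan is to reduce it to a derivative-under-the-integral computation on the fixed reference interval together with the formula for $\partial_t|\partial_\rho\boldsymbol{X}|$ that was already obtained in the proof of the inclination-angle identity. Concretely, I would parametrize $\Gamma(t)$ by the time-independent variable $\rho\in\mathbb{I}=[0,1]$ and use $\mathrm{d}s=|\partial_\rho\boldsymbol{X}|\,\mathrm{d}\rho$ to write
\[
  \int_{\Gamma(t)}f\,\mathrm{d}s=\int_{\mathbb{I}}f\,|\partial_\rho\boldsymbol{X}|\,\mathrm{d}\rho,
\]
where on the right $f$ stands for $f(\boldsymbol{X}(\rho,t),t)$ and, consistently with the paper's convention, $\partial_t f$ denotes the time derivative taken with $\rho$ held fixed (the material derivative).

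Since the parametrization is assumed regular and $f$ is differentiable, the integrand is differentiable in $t$ with a $t$-derivative that is integrable over $\mathbb{I}$, so I may differentiate under the integral sign:
\[
  \frac{\mathrm{d}}{\mathrm{d}t}\int_{\Gamma(t)}f\,\mathrm{d}s
  =\int_{\mathbb{I}}\Bigl(\partial_t f\,|\partial_\rho\boldsymbol{X}|+f\,\partial_t|\partial_\rho\boldsymbol{X}|\Bigr)\,\mathrm{d}\rho.
\]
Inserting the identity $\partial_t|\partial_\rho\boldsymbol{X}|=\partial_s\boldsymbol{X}\cdot\partial_s(\partial_t\boldsymbol{X})\,|\partial_\rho\boldsymbol{X}|$, which is precisely \eqref{eqn:time derivative of |grad X|}, and factoring out $|\partial_\rho\boldsymbol{X}|$ gives
\[
  \frac{\mathrm{d}}{\mathrm{d}t}\int_{\Gamma(t)}f\,\mathrm{d}s
  =\int_{\mathbb{I}}\Bigl(\partial_t f+f\,\partial_s(\partial_t\boldsymbol{X})\cdot\partial_s\boldsymbol{X}\Bigr)|\partial_\rho\boldsymbol{X}|\,\mathrm{d}\rho,
\]
and converting back via $|\partial_\rho\boldsymbol{X}|\,\mathrm{d}\rho=\mathrm{d}s$ yields the asserted formula.

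The only point needing care is the piecewise-$C^1$ hypothesis: I would split $\mathbb{I}$ at the finitely many, time-independent nodes where $\boldsymbol{X}(\cdot,t)$ fails to be $C^1$, apply the above computation on each closed subinterval separately, and add the results. Because the subintervals are fixed in time, no boundary terms are generated, so the piecewise case follows immediately from the smooth one. I do not anticipate any genuine difficulty here — all of the geometric content lies in the already-established expression for $\partial_t|\partial_\rho\boldsymbol{X}|$.
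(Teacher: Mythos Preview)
Your proof is correct and is the standard derivation of the transport formula. Note, however, that the paper itself does not supply a proof of this lemma: it is simply quoted from \cite{zhang2025stabilized} and stated without argument, so there is no ``paper's own proof'' to compare against. Your use of the identity \eqref{eqn:time derivative of |grad X|} for $\partial_t|\partial_\rho\boldsymbol{X}|$, which the paper establishes just before stating the transport lemma, is exactly the right ingredient and makes the argument self-contained within the paper's framework.
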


\begin{proposition}[Area decay rate and energy dissipation]
  Let $(\boldsymbol{X}(\cdot,t),\mu(\cdot,t))$ be the solution to the variational formulation \eqref{eqn:var form}. Then the total area $A(t)$ obeys the following decay rate and the total interficial energy $W(t)$ is dissipative, i.e., \begin{equation}
    \frac{\mathrm{d}A}{\mathrm{d}t}=-\Bigl(\mu,1\Bigr)_{\Gamma(t)},\qquad W(t)\leq W(t^\prime)\leq W(0),\qquad\forall t\geq t^\prime\geq 0.
  \end{equation}
\end{proposition}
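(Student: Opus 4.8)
The plan is to prove the two assertions separately, each by differentiating the relevant geometric quantity in time and then substituting a convenient test function into the variational formulation \eqref{eqn:var form}.

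For the area decay rate I would first establish the kinematic identity $\frac{\mathrm{d}A}{\mathrm{d}t}=\bigl(\boldsymbol{n}\cdot\partial_t\boldsymbol{X},1\bigr)_{\Gamma(t)}$. Writing $A(t)=\int_{\mathbb{I}}y\,\partial_\rho x\,\mathrm{d}\rho$ (using $\partial_s x\,\mathrm{d}s=\partial_\rho x\,\mathrm{d}\rho$), differentiating under the integral sign and integrating by parts in $\rho$ with the periodic boundary conditions gives $\frac{\mathrm{d}A}{\mathrm{d}t}=\int_{\mathbb{I}}\bigl(\partial_t y\,\partial_\rho x-\partial_t x\,\partial_\rho y\bigr)\,\mathrm{d}\rho$; substituting $\partial_\rho\boldsymbol{X}=|\partial_\rho\boldsymbol{X}|(\cos\theta,\sin\theta)^T$ and recognizing $\boldsymbol{n}=(-\sin\theta,\cos\theta)^T$ turns the integrand into $|\partial_\rho\boldsymbol{X}|\,(\partial_t\boldsymbol{X}\cdot\boldsymbol{n})$, i.e.\ $\int_{\Gamma(t)}\partial_t\boldsymbol{X}\cdot\boldsymbol{n}\,\mathrm{d}s$. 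Then taking $\varphi\equiv 1\in H^1_p(\mathbb{I})$ in \eqref{eqn:var form a} immediately yields $\frac{\mathrm{d}A}{\mathrm{d}t}=\bigl(\boldsymbol{n}\cdot\partial_t\boldsymbol{X},1\bigr)_{\Gamma(t)}=-\bigl(\mu,1\bigr)_{\Gamma(t)}$.

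For the energy I would apply the transport lemma (Lemma~\ref{lma:transport lma}) with $f=\hat{\gamma}(\theta)$, obtaining $\frac{\mathrm{d}W}{\mathrm{d}t}=\int_{\Gamma(t)}\hat{\gamma}^\prime(\theta)\,\partial_t\theta+\hat{\gamma}(\theta)\,\partial_s(\partial_t\boldsymbol{X})\cdot\partial_s\boldsymbol{X}\,\mathrm{d}s$. Rewriting $\partial_t\theta=\partial_s(\partial_t\boldsymbol{X})\cdot\boldsymbol{n}$ via the inclination-angle identity \eqref{eqn:inclination angle identity}, the integrand becomes $\partial_s(\partial_t\boldsymbol{X})\cdot\bigl(\hat{\gamma}(\theta)\partial_s\boldsymbol{X}+\hat{\gamma}^\prime(\theta)\boldsymbol{n}\bigr)$, which by \eqref{eqn:surf mat act on grad X} equals $\partial_s(\partial_t\boldsymbol{X})\cdot\bigl(\hat{\boldsymbol{G}}_k^\alpha(\theta)\partial_s\boldsymbol{X}\bigr)$, so $\frac{\mathrm{d}W}{\mathrm{d}t}=\bigl(\hat{\boldsymbol{G}}_k^\alpha(\theta)\partial_s\boldsymbol{X},\partial_s(\partial_t\boldsymbol{X})\bigr)_{\Gamma(t)}$. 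Choosing $\boldsymbol{\omega}=\partial_t\boldsymbol{X}\in[H^1_p(\mathbb{I})]^2$ in \eqref{eqn:var form b} replaces this by $\bigl(\mu\boldsymbol{n},\partial_t\boldsymbol{X}\bigr)_{\Gamma(t)}=\bigl(\mu,\boldsymbol{n}\cdot\partial_t\boldsymbol{X}\bigr)_{\Gamma(t)}$, and then taking $\varphi=\mu\in H^1_p(\mathbb{I})$ in \eqref{eqn:var form a} gives $\bigl(\mu,\boldsymbol{n}\cdot\partial_t\boldsymbol{X}\bigr)_{\Gamma(t)}=-\bigl(\mu,\mu\bigr)_{\Gamma(t)}\le 0$. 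Hence $\frac{\mathrm{d}W}{\mathrm{d}t}=-\bigl(\mu,\mu\bigr)_{\Gamma(t)}\le 0$, and integrating in time from $t^\prime$ to $t$ yields $W(t)\le W(t^\prime)\le W(0)$.

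I expect no serious obstacle: the computations are routine once the geometric identities are in place. The only points requiring care are (i) justifying that $\partial_t\boldsymbol{X}$ is an admissible test function, which follows from the assumed regularity of the solution, and (ii) bookkeeping the chain $\varphi=1$, $\boldsymbol{\omega}=\partial_t\boldsymbol{X}$, $\varphi=\mu$ and applying \eqref{eqn:surf mat act on grad X} with the correct argument so that the $\alpha$- and $k$-dependent terms of $\hat{\boldsymbol{G}}_k^\alpha(\theta)$ cancel against $\partial_s\boldsymbol{X}=\boldsymbol{\tau}$, leaving the manifestly nonpositive dissipation $-\bigl(\mu,\mu\bigr)_{\Gamma(t)}$ independently of $\alpha$ and $k$.
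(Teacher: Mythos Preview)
Your proposal is correct and follows essentially the same approach as the paper: the same test-function choices $\varphi=1$, $\boldsymbol{\omega}=\partial_t\boldsymbol{X}$, $\varphi=\mu$, the same use of the transport lemma and the inclination-angle identity \eqref{eqn:inclination angle identity}, and the same reduction via \eqref{eqn:surf mat act on grad X}. The only cosmetic difference is that for the area identity $\frac{\mathrm{d}A}{\mathrm{d}t}=\bigl(\boldsymbol{n}\cdot\partial_t\boldsymbol{X},1\bigr)_{\Gamma(t)}$ the paper simply invokes Reynolds' transport theorem, whereas you derive it by an explicit integration by parts in $\rho$; both are standard and equivalent.
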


\begin{proof}
  Denote $\Omega(t)$ the region enclosed by $\Gamma(t)$. Applying the Reynolds' transport theorem \cite{reynolds1983papers} and taking $\varphi=1$ in \eqref{eqn:var form a}, \begin{equation}
    \begin{aligned}
      \frac{\mathrm{d}}{\mathrm{d}t}A(t)&=\frac{\mathrm{d}}{\mathrm{d}t}\int_{\Omega(t)}\mathrm{d}x\,\mathrm{d}y=\int_{\Gamma(t)}\boldsymbol{n}\cdot\partial_t\boldsymbol{X}\,\mathrm{d}s\\
      &=\Bigl(\boldsymbol{n}\cdot\partial_t\boldsymbol{X},1\Bigr)_{\Gamma(t)}=-\Bigl(\mu,1\Bigr)_{\Gamma(t)}.
    \end{aligned}
  \end{equation}

  For the energy dissipation, differenting $W(t)$ with respect to $t$ by Lemma~\ref{lma:transport lma}, \begin{equation}
    \begin{aligned}
      \frac{\mathrm{d}}{\mathrm{d}t}W(t)=\int_{\Gamma(t)}\hat{\gamma}^\prime(\theta)\partial_t\theta+\hat{\gamma}(\theta)\partial_s(\partial_t\boldsymbol{X})\cdot\partial_s\boldsymbol{X}\,\mathrm{d}s.
    \end{aligned}
  \end{equation} Combining with the geometric identity \eqref{eqn:inclination angle identity} of $\theta$, we have \begin{equation}\label{eqn:energy time derivative}
    \begin{aligned}
      \frac{\mathrm{d}}{\mathrm{d}t}W(t)&=\int_{\Gamma(t)}\left[\hat{\gamma}(\theta)\partial_s\boldsymbol{X}+\hat{\gamma}^\prime(\theta)\boldsymbol{n}\right]\cdot\partial_s(\partial_t\boldsymbol{X})\,\mathrm{d}s\\
      &=\int_{\Gamma(t)}\hat{\boldsymbol{G}}_k^\alpha(\theta)\partial_s\boldsymbol{X}\cdot\partial_s(\partial_t\boldsymbol{X})\,\mathrm{d}s.
    \end{aligned}
  \end{equation} By setting $\varphi=\mu$ in \eqref{eqn:var form a} and $\boldsymbol{\omega}=\partial_t\boldsymbol{X}$ in \eqref{eqn:var form b}, and considering \eqref{eqn:energy time derivative}, then \begin{equation}
    \begin{aligned}
      \frac{\mathrm{d}}{\mathrm{d}t}W(t)&=\Bigl(\hat{\boldsymbol{G}}_k^\alpha(\theta)\partial_s\boldsymbol{X},\partial_s(\partial_t\boldsymbol{X})\Bigr)_{\Gamma(t)}=\Bigl(\mu\boldsymbol{n},\partial_t\boldsymbol{X}\Bigr)_{\Gamma(t)}=-\Bigl(\mu,\mu\Bigr)_{\Gamma(t)}\leq 0.
    \end{aligned}
  \end{equation}
\end{proof}

\section{A structure-preserving parametric finite element approximation}\label{sec:SP-PFEM}

In this section, a parametric finite element full discretization is proposed based on the variational form \eqref{eqn:var form}, which preserves both the area decay rate and the energy dissipation.

Let $N>2$ be a positive integer and define the mesh size as $h=1/N$. Consider a uniform partition of the interval as $\mathbb{I}=[0,1]\coloneqq\cup_{j=1}^NI_j$ with $I_j=[\rho_{j-1},\rho_j],\rho_j\coloneqq jh$ for $j=0,1,\ldots,N$. The piecewise linear finite element spaces are defined as \begin{subequations}
  \begin{align}
    &\mathbb{K}^h\coloneqq\left\{u^h\in C(\mathbb{I})\mid u^h|_{I_j}\in\mathcal{P}^1(I_j),\,\forall j=1,2,\ldots,N\right\}\subseteq H^1(\mathbb{I}),\\
    &\mathbb{K}_p^h\coloneqq\left\{u^h\in\mathbb{K}^h\mid u^h(0)=u^h(1)\right\}\subseteq H^1_p(\mathbb{I}),
  \end{align}
\end{subequations} where $\mathcal{P}^1(I_j)$ represents the space of all polynomials on the interval $I_j$ with degree at most $1$. Additionally, we define $0=t_0<t_1<\cdots<t_M=T$ be a uniform partition of $[0,T]$ with time steps $t_m=m\tau,\,\,\tau\coloneqq T/M$.

Let $\Gamma^m\coloneqq\boldsymbol{X}^m(\cdot)\in[\mathbb{K}_p^h]^2$ be an approximation of $\Gamma(t_m)=\boldsymbol{X}(\cdot,t_m=m\tau)$, satisfies the following non-degeneracy condition: 
\begin{equation}
  \min_{1\leq j\leq N}|\boldsymbol{h}_j^m|>0,\qquad\forall m=0,1,\ldots,M,
\end{equation} 
where $\boldsymbol{h}_j^m\coloneqq\boldsymbol{X}^m(\rho_j)-\boldsymbol{X}^m(\rho_{j-1})$ and $\displaystyle u(\rho_j^\pm)=\lim_{\rho\to\rho^\pm_j}u(\rho)$. Similarly, $\mu^m\in\mathbb{K}_p^h$ denotes an approximation of $\mu(\cdot,t_m)$. 

The mass-lumped inner product for $u,v\in\mathbb{K}_p^h$ is defined as follows: \begin{equation}
  \Bigl(u,v\Bigr)^h_{\Gamma^m}\coloneqq \sum_{j=1}^N\frac{|\boldsymbol{h}_j^m|}{2}\left(u(\rho_j^-)v(\rho_j^-)+u(\rho_{j-1}^+)v(\rho_{j-1}^+)\right).
\end{equation}
And the discretized derivative $\partial_{s^m}$ on $\Gamma^m$ is defined as \begin{equation}
  \partial_{s^m} f|_{I_j}\coloneqq \frac{f(\rho_j)-f(\rho_{j-1})}{|\boldsymbol{h}_j^m|},\qquad\forall j=1,2,\cdots,N.
\end{equation} 
The above definitions can be directly extended to vector-valued functions. 

Discrete geometric quantities such as the unit tangential vector $\boldsymbol{\tau}^m$, the unit normal vector $\boldsymbol{n}^m$ and the inclination angle $\theta^m$ of the polygonal curve $\Gamma^m$ can be computed as \begin{equation}
  \boldsymbol{\tau}^m|_{I_j}=\frac{\boldsymbol{h}_j^m}{|\boldsymbol{h}_j^m|}\coloneqq\boldsymbol{\tau}_j^m,\qquad\boldsymbol{n}^m|_{I_j}=-\frac{(\boldsymbol{h}_j^m)^\perp}{|\boldsymbol{h}_j^m|}\coloneqq \boldsymbol{n}_j^m,
\end{equation} and \begin{equation}
  \theta^m|_{I_j}=\theta_j^m,\qquad\text{where}\,\,\theta_j^m\,\,\text{satisfying}\,\,(\cos\theta_j^m,\sin\theta_j^m)^T=\boldsymbol{\tau}_j^m.
\end{equation}

Now we are ready to present a structure-preserving parametric finite element approximation for the anisotropic curvature flow \eqref{eqn:aniso curvature flow}--\eqref{eqn:repres of weighted curvature}: 

Suppose $\Gamma^0\in[\mathbb{K}_p^h]^2$ be the initial approximation given by $\boldsymbol{X}^0(\rho_j)=\boldsymbol{X}(\rho_j,0),\,\,j=1,2,\cdots,N$. Find the solution $\left(\boldsymbol{X}^{m+1}(\cdot)=(x^m(\cdot),y^m(\cdot))^T,\mu^{m+1}(\cdot)\right)\in[\mathbb{K}_p^h]^2\times\mathbb{K}_p^h,\,\,m=0,1,\cdots,M-1$, such that \begin{subequations}\label{eqn:SP-PFEM}
  \begin{align}
    &\Bigl(\boldsymbol{n}^{m+\frac{1}{2}}\cdot\frac{\boldsymbol{X}^{m+1}-\boldsymbol{X}^m}{\tau},\varphi^h\Bigr)^h_{\Gamma^m}+\Bigl(\mu^{m+1},\varphi^h\Bigr)^h_{\Gamma^m}=0,\qquad\forall\varphi^h\in\mathbb{K}_p^h,\label{eqn:SP-PFEM a}\\
    &\Bigl(\mu^{m+1}\boldsymbol{n}^{m+\frac{1}{2}},\boldsymbol{\omega}^h\Bigr)^h_{\Gamma^m}-\Bigl(\hat{\boldsymbol{G}}_k^\alpha(\theta^m)\partial_{s^m}\boldsymbol{X}^{m+1},\partial_{s^m}\boldsymbol{\omega}^h\Bigr)^h_{\Gamma^m}=0,\qquad\forall\boldsymbol{\omega}^h\in[\mathbb{K}_p^h]^2,\label{eqn:SP-PFEM b}
  \end{align}
\end{subequations} where \begin{equation}
  \boldsymbol{n}^{m+\frac{1}{2}}\coloneqq-\frac{1}{2}\left(\partial_{s^m}\boldsymbol{X}^{m}+\partial_{s^m}\boldsymbol{X}^{m+1}\right)^\perp=-\frac{1}{2|\partial_\rho\boldsymbol{X}^m|}\left(\partial_\rho\boldsymbol{X}^m+\partial_\rho\boldsymbol{X}^{m+1}\right)^\perp.
\end{equation} 


\begin{remark}
  The choice of $\boldsymbol{n}^{m+\frac{1}{2}}$ is motivated by the area-preserving PFEM proposed by Bao and Zhao for surface diffusion \cite{bao2021structure}. It rigorously characterizes the area difference of a evolving polygonal curve between two discrete time levels, making it crucial for maintaining the area decay rate.
\end{remark}

\begin{remark}
    Although it does not seem easy to prove the unique solvability of the scheme \eqref{eqn:SP-PFEM}, if we replace $\boldsymbol{n}^{m+\frac{1}{2}}$ with $\boldsymbol{n}^m$, the scheme becomes linear. In this case, through an argument similar to that in \cite{barrett2007parametric,barrett2020parametricbook}, we can prove that the linear system admits a unique solution under relatively weak assumptions on $\boldsymbol{n}^m$.
\end{remark}

\begin{remark}
An extension of the proposed SP-PFEM to higher-order finite element spaces is an important topic for future research. Based on the recent strictly structure-preserving isoparametric finite element method for isotropic curvature flow developed in \cite{garcke2025isoparametric}, we expect that such an extension is feasible. In particular, if one adopts a similar discretization of the normal vector $\boldsymbol{n}$ and establishes the corresponding local energy estimates, then the present method may be generalized to higher-order elements while still preserving key structural properties such as area conservation and energy dissipation.
\end{remark}

\subsection{Area decay rate preserving and energy dissipation properties of the SP-PFEM}

Let $A^m$ be the area enclosed by the polygonal curve $\Gamma^m$, and $W^m$ be the total interfacial energy, which are given by \begin{equation}
  A^m\coloneqq\frac{1}{2}\sum_{j=1}^N\left(x^m(\rho_j)-x^m(\rho_{j-1})\right)\left(y^m(\rho_j)+y^m(\rho_{j-1})\right),\qquad W^m\coloneqq \sum_{j=1}^N\hat{\gamma}(\theta_j^m)|\boldsymbol{h}_j^m|.
\end{equation}

Our main result is stated as follows:

\begin{theorem}[structure-preserving]\label{thm:main result}
  The SP-PFEM \eqref{eqn:SP-PFEM} is area decay rate preserving, i.e., \begin{equation}
    \frac{A^{m+1}-A^m}{\tau}=-\Bigl(\mu^{m+1},1\Bigr)^h_{\Gamma^m},\qquad\forall 0\leq m\leq M-1.
  \end{equation}
  
  Moreover, if $\alpha=-1$ and $\hat{\gamma}(\theta)\in C^2(2\pi\mathbb{T})$ satisfies \begin{equation}\label{eqn:energy cond rlx}
    3\hat{\gamma}(\theta)-\hat{\gamma}(\theta-\pi)\geq 0,\qquad\forall\theta\in 2\pi\mathbb{T}.
  \end{equation} 
  Then the SP-PFEM \eqref{eqn:SP-PFEM} is unconditionally energy stable with sufficiently large $k(\theta)$, i.e., \begin{equation}
    W^{m+1}\leq W^m\leq\cdots\leq W^0,\qquad\forall 0\leq m\leq M-1.
  \end{equation} 
  Otherwise, for $\alpha\neq-1$, the energy stability condition \eqref{eqn:energy cond rlx} needs to be strengthened to \begin{equation}\label{eqn:energy cond}
    3\hat{\gamma}(\theta)-\hat{\gamma}(\theta-\pi)>0,\qquad\forall\theta\in2\pi\mathbb{T}.
  \end{equation}
\end{theorem}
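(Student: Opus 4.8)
The plan is to establish the two assertions separately. The area decay rate is purely algebraic: I would compute $A^{m+1}-A^m$ using the trapezoidal-rule formula for the signed area of a polygon and recognize it as $\tau\,(\boldsymbol{n}^{m+\frac12}\cdot(\boldsymbol{X}^{m+1}-\boldsymbol{X}^m)/\tau,1)^h_{\Gamma^m}$; this is precisely the identity motivating the choice of $\boldsymbol{n}^{m+\frac12}$ (cited from \cite{bao2021structure}), so the key step is verifying that $-\tfrac12(\partial_s\boldsymbol{X}^m+\partial_s\boldsymbol{X}^{m+1})^\perp$ reproduces the midpoint-type quadrature that appears when expanding the area difference edge by edge. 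Then taking $\varphi^h=1$ in \eqref{eqn:SP-PFEM a} gives the stated decay rate immediately.

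For energy stability, I would mimic the continuous computation in \eqref{eqn:energy time derivative}: test \eqref{eqn:SP-PFEM a} with $\varphi^h=\mu^{m+1}$ and \eqref{eqn:SP-PFEM b} with $\boldsymbol{\omega}^h=\boldsymbol{X}^{m+1}-\boldsymbol{X}^m$, which yields
\begin{equation*}
  0\geq-\tau\,(\mu^{m+1},\mu^{m+1})^h_{\Gamma^m}=\Bigl(\hat{\boldsymbol{G}}_k^\alpha(\theta^m)\partial_s\boldsymbol{X}^{m+1},\partial_s(\boldsymbol{X}^{m+1}-\boldsymbol{X}^m)\Bigr)^h_{\Gamma^m}.
\end{equation*}
Summing edgewise, it therefore suffices to prove the fully discrete one-edge inequality
\begin{equation*}
  \hat{\gamma}(\theta_j^m)|\boldsymbol{h}_j^{m+1}|-\hat{\gamma}(\theta_j^{m+1})|\boldsymbol{h}_j^{m+1}|\;\leq\;\frac{1}{|\boldsymbol{h}_j^m|}\,(\boldsymbol{h}_j^{m+1})^T\hat{\boldsymbol{G}}_k^\alpha(\theta_j^m)\,(\boldsymbol{h}_j^{m+1}-\boldsymbol{h}_j^m)\;-\;\Bigl(W^{m+1}_j-W^m_j\Bigr)+\text{(tangential-free terms)},
\end{equation*}
or, more precisely, the standard reduction: $W^{m+1}-W^m\leq\sum_j\frac{1}{|\boldsymbol{h}_j^m|}(\boldsymbol{h}_j^{m+1})^T\hat{\boldsymbol{G}}_k^\alpha(\theta_j^m)(\boldsymbol{h}_j^{m+1}-\boldsymbol{h}_j^m)$, since the right-hand side is $\leq 0$ by the tested equations. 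Writing $\boldsymbol{h}_j^m=|\boldsymbol{h}_j^m|(\cos\theta_j^m,\sin\theta_j^m)^T$ and $\boldsymbol{h}_j^{m+1}=|\boldsymbol{h}_j^{m+1}|(\cos\theta_j^{m+1},\sin\theta_j^{m+1})^T$, and using the explicit form \eqref{eqn:Gk} of $\hat{\boldsymbol{G}}_k^\alpha$, the quadratic form $\frac{1}{|\boldsymbol{h}_j^m|}(\boldsymbol{h}_j^{m+1})^T\hat{\boldsymbol{G}}_k^\alpha(\theta_j^m)(\boldsymbol{h}_j^{m+1}-\boldsymbol{h}_j^m)$ becomes an explicit expression in the two lengths and the angle difference $\Delta\theta:=\theta_j^{m+1}-\theta_j^m$; the whole problem collapses to showing a scalar inequality of the form
\begin{equation*}
  \hat{\gamma}(\theta+\Delta\theta)\,L'-\hat{\gamma}(\theta)\,L'\;\leq\;P_\alpha\bigl(\theta,\Delta\theta;L,L',k(\theta)\bigr),
\end{equation*}
where $L=|\boldsymbol{h}_j^m|$, $L'=|\boldsymbol{h}_j^{m+1}|$, for all admissible $L,L'>0$ and all $\Delta\theta\in(-\pi,\pi)$ (genuinely $2\pi\mathbb{T}$, but non-degeneracy excludes $\Delta\theta=\pm\pi$).

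The main obstacle is this scalar minimization. The dependence on the ratio $L'/L$ can be optimized first (the right side is quadratic in $L'$ for fixed $L$), reducing to an inequality in $\theta$ and $\Delta\theta$ alone; the role of $k(\theta)$ is to absorb the $\boldsymbol{n}\boldsymbol{n}^T$-coefficient, and "sufficiently large $k$" means $k(\theta)\geq$ some explicit bound depending on $\max|\hat{\gamma}|,\max|\hat{\gamma}'|,\max|\hat{\gamma}''|$. For $\alpha=-1$ the matrix is symmetric, the cross terms involving $\hat{\gamma}'(\theta_j^m)$ organize into a perfect-square-friendly form, and the residual inequality — after the $L'$-optimization — reduces (by the same Taylor/convexity argument as in \cite{bao2023symmetrized2D,bao2025structure,li2025structure}) to the pointwise condition that a certain function built from $\hat{\gamma}(\theta)$ and $\hat{\gamma}(\theta-\pi)$ be nonnegative, which is exactly $3\hat{\gamma}(\theta)-\hat{\gamma}(\theta-\pi)\geq 0$; equality $\Delta\theta=\pm\pi$ is the limiting case that forces "$\geq 0$" rather than "$>0$" to be exactly what is needed. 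For $\alpha\neq-1$, the antisymmetric part $(\alpha+1)\hat{\gamma}'(\theta)(\boldsymbol{n}\boldsymbol{\tau}^T+\boldsymbol{\tau}\boldsymbol{n}^T)/2$-type contribution does not vanish, so the quadratic form is no longer controlled by a clean square; tracking this extra term through the $L'$-optimization shows the discriminant condition degrades to the strict inequality $3\hat{\gamma}(\theta)-\hat{\gamma}(\theta-\pi)>0$ unless the offending term vanishes (e.g. at a critical point $\hat{\gamma}'(\theta^*)=0$), which establishes the "otherwise" clause. I would structure the energy part as: (i) the testing step and edgewise reduction; (ii) a lemma stating the scalar inequality with explicit $k$; (iii) the $L'$-optimization; (iv) the $\alpha=-1$ case via the square decomposition and the $3\hat\gamma\ge\hat\gamma(\cdot-\pi)$ hypothesis; (v) the $\alpha\neq-1$ case showing strictness is forced. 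The telescoping sum over $m$ then gives $W^{m+1}\leq W^m$.
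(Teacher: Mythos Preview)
Your proposal is correct and follows essentially the same route as the paper: the area identity via \cite{bao2021structure} and $\varphi^h=1$, then the energy argument by testing with $\mu^{m+1}$ and $\boldsymbol{X}^{m+1}-\boldsymbol{X}^m$, reducing to an edgewise scalar inequality (the paper's Lemma~\ref{lma:loc energy est}) whose validity for sufficiently large $k(\theta)$ is analyzed through auxiliary functions $P_{\alpha,a},Q_\alpha$ and an $L'$-optimization exactly as you outline. The paper formalizes your step~(ii) by \emph{defining} the minimal stabilizing function $k_0^\alpha(\theta)$ as the infimum making $4\hat\gamma P_{\alpha,a}\ge Q_\alpha^2$ hold, and then proves its finiteness under \eqref{eqn:energy cond rlx} for $\alpha=-1$ (Theorem~\ref{thm:existence of k0 c=0}) and under \eqref{eqn:energy cond} for general $\alpha$ (Theorem~\ref{thm:existence of k0 c>0}), with the $\alpha\neq-1$ obstruction pinpointed by a first-order Taylor expansion at $\phi=\pi$ yielding the coefficient $8(\alpha+1)\hat\gamma(\theta^*)\hat\gamma'(\theta^*)$---precisely your ``offending term.''
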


We only provide a proof for area decay rate preserving property here, and leave the energy dissipation part to the next section.

\begin{proof}
  Similar to derivations of \cite[Theorem~2.1]{bao2021structure}, we have
  \begin{equation}\label{eqn:area decay rate formula}
    A^{m+1}-A^m=\Bigl(\boldsymbol{n}^{m+\frac{1}{2}}\cdot(\boldsymbol{X}^{m+1}-\boldsymbol{X}^m),1\Bigr)^h_{\Gamma^m}.
  \end{equation} Thus by taking $\varphi^h\equiv 1$ in \eqref{eqn:SP-PFEM a}, we obtain \begin{equation}
    A^{m+1}-A^m=-\tau\Bigl(\mu^{m+1},1\Bigr)^h_{\Gamma^m}.
  \end{equation} This establishes the area decay rate formula.
\end{proof}

\section{Proof of the unconditional energy stability}\label{sec:proof of energy stability}

\subsection{Minimal stabilizing function and unconditional energy stability}\label{subsec:proof of energy stability}

To prove the unconditional energy dissipation property of the SP-PFEM \eqref{eqn:SP-PFEM}, we first introduce a minimal stabilizing function $k^\alpha_\text{min}(\theta)$, which is defined as \begin{equation}\label{eqn:def of min stab func}
  k^\alpha_\text{min}(\theta)\coloneqq k_0^\alpha(\theta)-(\alpha-1)\hat{\gamma}(\theta),\qquad\forall\theta\in 2\pi\mathbb{T},
\end{equation} where $k_0^\alpha(\theta)$ is given by \begin{equation}\label{eqn:def of min stab func b}
  k_0^\alpha(\theta)\coloneqq \inf\left\{a\geq 0\mid 4\hat{\gamma}(\theta)P_{\alpha, a}(\phi,\theta)\geq Q^2_\alpha(\phi,\theta),\,\,\forall\phi\in 2\pi\mathbb{T}\right\}.
\end{equation} Here, $P_{\alpha, a},Q_\alpha$ are two auxiliary functions defined as \begin{subequations}\label{eqn:aux func}
  \begin{align}
    &P_{\alpha, a}(\phi,\theta)\coloneqq \hat{\gamma}(\theta)+\frac{\alpha-1}{2}\hat{\gamma}^\prime(\theta)\sin 2\phi+a\sin^2\phi,\label{eqn:aux func a}\\
    &Q_\alpha(\phi,\theta)\coloneqq \hat{\gamma}(\theta-\phi)+\hat{\gamma}(\theta)\cos\phi+\alpha\hat{\gamma}^\prime(\theta)\sin\phi.\label{eqn:aux func b}
  \end{align}
\end{subequations}

\begin{lemma}\label{asp:stab func well-defined}
  Suppose that $\hat{\gamma}(\theta)\in C^2(2\pi\mathbb{T})$ satisfies the condition \eqref{eqn:energy cond} for $\alpha=-1$, and condition \eqref{eqn:energy cond rlx} otherwise. Then the minimal stabilizing function $k_\text{min}^\alpha(\theta)$ exists, i.e. \begin{equation}\label{eqn:stab func well-defined}
    k_\text{min}^\alpha(\theta)<+\infty,\qquad\forall\theta\in 2\pi\mathbb{T}.
  \end{equation}
\end{lemma}

The proof of Lemma~\ref{asp:stab func well-defined} will be provided in Section~\ref{sec:existence of min stab func}. For the moment, we assume its validity. The following local energy estimate follows from Lemma~\ref{asp:stab func well-defined}: \begin{lemma}[local energy estimate]\label{lma:loc energy est}
  Assume that \eqref{eqn:stab func well-defined} holds true. For any $\boldsymbol{p},\boldsymbol{q}\in\mathbb{R}^2\backslash\{\boldsymbol{0}\}$, let $\boldsymbol{p}=|\boldsymbol{p}|(\cos\varphi,\sin\varphi)^T,\boldsymbol{q}=|\boldsymbol{q}|(\cos\theta,\sin\theta)^T$. Then for sufficiently large $k(\theta)$, \begin{equation}\label{eqn:loc energy est}
    \frac{1}{|\boldsymbol{q}|}\Bigl(\hat{\boldsymbol{G}}_k^\alpha(\theta)\boldsymbol{p}\Bigr)\cdot(\boldsymbol{p}-\boldsymbol{q})\geq \hat{\gamma}(\varphi)|\boldsymbol{p}|-\hat{\gamma}(\theta)|\boldsymbol{q}|.
  \end{equation}
\end{lemma}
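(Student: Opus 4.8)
\textbf{Proof proposal for Lemma~\ref{lma:loc energy est}.}

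The plan is to reduce the vectorial inequality \eqref{eqn:loc energy est} to a scalar inequality in the two angles $\varphi,\theta$ and the length ratio, and then to exploit the definition of $k_0^\alpha(\theta)$ via a completion-of-square (or discriminant) argument. First I would expand the left-hand side using the explicit form \eqref{eqn:Gk} of $\hat{\boldsymbol{G}}_k^\alpha(\theta)$, writing $\boldsymbol{p}=|\boldsymbol{p}|(\cos\varphi,\sin\varphi)^T$ and $\boldsymbol{q}=|\boldsymbol{q}|(\cos\theta,\sin\theta)^T$ and using that $\boldsymbol{\tau}(\theta)=(\cos\theta,\sin\theta)^T$, $\boldsymbol{n}(\theta)=(-\sin\theta,\cos\theta)^T$. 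Since $\boldsymbol{p}=|\boldsymbol{p}|\big(\cos(\varphi-\theta)\boldsymbol{\tau}(\theta)+\sin(\varphi-\theta)\boldsymbol{n}(\theta)\big)$, setting $\phi\coloneqq\theta-\varphi$ the inner products $\boldsymbol{\tau}(\theta)\cdot\boldsymbol{p}$, $\boldsymbol{n}(\theta)\cdot\boldsymbol{p}$, $\boldsymbol{\tau}(\theta)\cdot\boldsymbol{q}$, $\boldsymbol{n}(\theta)\cdot\boldsymbol{q}$ all become elementary trigonometric expressions in $\phi$. After dividing by $|\boldsymbol{q}|$ and collecting terms, I expect the left-hand side to take the form $|\boldsymbol{p}|^2 P_{\alpha,k(\theta)-(\alpha-1)\hat\gamma(\theta)}(\phi,\theta)/|\boldsymbol{q}| \;-\; |\boldsymbol{p}|\,Q_\alpha(\phi,\theta)\;+\;(\text{terms involving }|\boldsymbol{q}|)$, matching precisely the auxiliary functions \eqref{eqn:aux func a}--\eqref{eqn:aux func b}; this is the bookkeeping step that justifies those definitions. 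The right-hand side $\hat{\gamma}(\varphi)|\boldsymbol{p}|-\hat{\gamma}(\theta)|\boldsymbol{q}|$ is already in terms of $\phi$ since $\varphi=\theta-\phi$, so $\hat\gamma(\varphi)=\hat\gamma(\theta-\phi)$.

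Once the inequality is reduced to a scalar statement, the next step is to move everything to one side and view it as a quadratic in the single variable $r\coloneqq|\boldsymbol{p}|/|\boldsymbol{q}|>0$: schematically $P_{\alpha,a}(\phi,\theta)\,r^2 - \big(Q_\alpha(\phi,\theta)+\hat\gamma(\theta-\phi)-\hat\gamma(\theta)+\dots\big)r + \hat\gamma(\theta)\ge 0$ — I will need to be careful about exactly which terms land in the linear coefficient, but the structure \eqref{eqn:def of min stab func b} strongly suggests the relevant combination is $2\hat\gamma(\theta) - Q_\alpha$ after using $\hat\gamma(\theta-\phi)+\hat\gamma(\theta)\cos\phi$ appearing inside $Q_\alpha$. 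A nonnegative quadratic $a r^2 - b r + c$ with $a=P_{\alpha,a}>0$, $c=\hat\gamma(\theta)>0$ holds for all $r>0$ iff $b\le 2\sqrt{ac}$, i.e. $b^2\le 4ac$; taking $a=k_0^\alpha(\theta)$ is exactly the infimal choice making $4\hat\gamma(\theta)P_{\alpha,a}\ge Q_\alpha^2$, which is \eqref{eqn:def of min stab func b}. So with $k(\theta)\ge k_\text{min}^\alpha(\theta)$ (hence $a=k(\theta)-(\alpha-1)\hat\gamma(\theta)\ge k_0^\alpha(\theta)$, using \eqref{eqn:def of min stab func}), and since $P_{\alpha,a}$ is nondecreasing in $a$, the discriminant condition holds for every $\phi$, which gives the claimed inequality for all $\boldsymbol{p},\boldsymbol{q}$.

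The main obstacle, and the step I would do most carefully, is the algebraic reduction: correctly expanding $\big(\hat{\boldsymbol{G}}_k^\alpha(\theta)\boldsymbol{p}\big)\cdot(\boldsymbol{p}-\boldsymbol{q})$ and verifying that the cross terms organize \emph{exactly} into $P_{\alpha,a}$ and $Q_\alpha$ — in particular tracking how the asymmetric parts $\hat\gamma'(\theta)(\boldsymbol{n}\boldsymbol{\tau}^T-\alpha\boldsymbol{\tau}\boldsymbol{n}^T)$ contribute the $\sin2\phi$ term in $P$ and the $\alpha\hat\gamma'(\theta)\sin\phi$ term in $Q$, and confirming that the $\boldsymbol{n}\boldsymbol{n}^T$ coefficient $k(\theta)+(\alpha-1)\hat\gamma(\theta)$ contributes precisely the $a\sin^2\phi$ term. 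A secondary subtlety is the degenerate case when $\boldsymbol{p}\parallel\boldsymbol{q}$ ($\phi=0$ or $\pi$), where $P_{\alpha,a}(\phi,\theta)=\hat\gamma(\theta)$ and the quadratic must still be nonnegative — this should reduce to $(r-1)^2\hat\gamma(\theta)\ge 0$ after the $Q_\alpha$ simplifications, which is automatic, so no extra hypothesis is needed there. Finally, I would remark that Assumption~\ref{asp:stab func well-defined} is exactly what guarantees $k_0^\alpha(\theta)<\infty$ so that a finite $k(\theta)$ suffices.
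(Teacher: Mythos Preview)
Your approach is correct and essentially the same as the paper's: expand $\hat{\boldsymbol{G}}_k^\alpha(\theta)\boldsymbol{p}\cdot\boldsymbol{p}$ and $\hat{\boldsymbol{G}}_k^\alpha(\theta)\boldsymbol{p}\cdot\boldsymbol{q}$ in the $(\boldsymbol{\tau}(\theta),\boldsymbol{n}(\theta))$ frame, recognize $P_{\alpha,a}$ and $Q_\alpha$, and finish with a quadratic-in-$r$ / discriminant argument (the paper phrases the last step as the equivalent elementary inequality $\tfrac{1}{4c}x^2-x\ge -c$). A few bookkeeping corrections you would discover once you carry out the expansion: (i) the subscript of $P$ is $k(\theta)+(\alpha-1)\hat\gamma(\theta)$, not $k(\theta)-(\alpha-1)\hat\gamma(\theta)$, so the comparison with $k_0^\alpha$ reads $k+(\alpha-1)\hat\gamma\ge k_0^\alpha$; (ii) the term $\tfrac{1}{|\boldsymbol{q}|}(\hat{\boldsymbol{G}}_k^\alpha\boldsymbol{p})\cdot\boldsymbol{q}$ equals $|\boldsymbol{p}|\big(Q_\alpha(\phi,\theta)-\hat\gamma(\varphi)\big)$, so there are no leftover ``terms involving $|\boldsymbol{q}|$'' on the left-hand side, and after moving the right-hand side over, the $\hat\gamma(\varphi)|\boldsymbol{p}|$ contributions cancel exactly, leaving the clean quadratic $P_{\alpha,a}(\phi,\theta)\,r^2 - Q_\alpha(\phi,\theta)\,r + \hat\gamma(\theta)\ge 0$ with linear coefficient precisely $Q_\alpha$.
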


\begin{proof}
  Recall the definition \eqref{eqn:surf energy mat} of $\hat{\boldsymbol{G}}_k^\alpha(\theta)$, then \begin{equation}
    \begin{aligned}
      &\frac{1}{|\boldsymbol{q}|}\Bigl(\hat{\boldsymbol{G}}_k^\alpha(\theta)\boldsymbol{p}\Bigr)\cdot\boldsymbol{p}\\
      &=\frac{|\boldsymbol{p}|^2}{|\boldsymbol{q}|}\left(\hat{\boldsymbol{G}}_k^\alpha(\theta)\left(\begin{array}{c}
        \cos\varphi\\
        \sin\varphi
      \end{array}\right)\right)\cdot\left(\begin{array}{c}
        \cos\varphi\\
        \sin\varphi
      \end{array}\right)\\
      &=\frac{|\boldsymbol{p}|^2}{|\boldsymbol{q}|}\left(\hat{\gamma}(\theta)+(\alpha-1)\hat{\gamma}^\prime(\theta)\sin(\theta-\varphi)\cos(\theta-\varphi)+\left(k(\theta)+(\alpha-1)\hat{\gamma}(\theta)\right)\sin^2(\theta-\varphi)\right)\\
      &=\frac{|\boldsymbol{p}|^2}{|\boldsymbol{q}|}\left(\hat{\gamma}(\theta)+\frac{\alpha-1}{2}\hat{\gamma}^\prime(\theta)\sin\left(2(\theta-\varphi)\right)+\left(k(\theta)+(\alpha-1)\hat{\gamma}(\theta)\right)\sin^2(\theta-\varphi)\right)\\
      &=\frac{|\boldsymbol{p}|^2}{|\boldsymbol{q}|}P_{\alpha, k + (\alpha - 1)\hat{\gamma}}(\theta-\varphi,\theta),
    \end{aligned}
  \end{equation} and \begin{equation}
    \begin{aligned}
      \frac{1}{|\boldsymbol{q}|}\Bigl(\hat{\boldsymbol{G}}_k^\alpha(\theta)\boldsymbol{p}\Bigr)\cdot\boldsymbol{q}&=|\boldsymbol{p}|\left(\hat{\boldsymbol{G}}_k^\alpha(\theta)\left(\begin{array}{c}
        \cos\varphi\\
        \sin\varphi
      \end{array}\right)\right)\cdot\left(\begin{array}{c}
        \cos\theta\\
        \sin\theta
      \end{array}\right)\\
      &=|\boldsymbol{p}|\left(\hat{\gamma}(\theta)\cos(\theta-\varphi)+\alpha\hat{\gamma}^\prime(\theta)\sin(\theta-\varphi)\right)\\
      &=|\boldsymbol{p}|\left(Q_\alpha(\theta-\varphi,\theta)-\hat{\gamma}(\varphi)\right).
    \end{aligned}
  \end{equation} Suppose that \eqref{eqn:stab func well-defined} is satisfied. Then for sufficiently large $k(\theta)\geq k_\text{min}^\alpha(\theta)$, we know $k(\theta)+(\alpha-1)\hat{\gamma}(\theta)\geq k_\text{min}^\alpha(\theta) + (\alpha-1)\hat{\gamma}(\theta) = k_0^\alpha(\theta)$. Thus, from \eqref{eqn:def of min stab func b}, we have 
  \begin{equation}
    \begin{aligned}
    4\hat{\gamma}(\theta)P_{\alpha, k + (\alpha - 1)\hat{\gamma}}(\theta-\varphi,\theta) &= 4\hat{\gamma}(\theta)P_{\alpha, k_0^\alpha}(\theta-\varphi,\theta) + 4\hat{\gamma}(\theta)\left(k(\theta) - k_\text{min}^\alpha(\theta)\right)\sin^2(\theta-\varphi) \\
    &\geq Q_\alpha^2(\theta-\varphi,\theta),\qquad\forall\varphi\in 2\pi\mathbb{T}.
    \end{aligned}
  \end{equation} Combining with the fact that $\frac{1}{4c}x^2-x\geq-c,\,\,\forall x\in\mathbb{R},\,\,c>0$ gives 
  \begin{equation}
    \begin{aligned}
      &\frac{1}{|\boldsymbol{q}|}\Bigl(\hat{\boldsymbol{G}}_k^\alpha(\theta)\boldsymbol{p}\Bigr)\cdot(\boldsymbol{p}-\boldsymbol{q})\\
      & = \frac{|\boldsymbol{p}|^2}{|\boldsymbol{q}|}P_{\alpha, k + (\alpha - 1)\hat{\gamma}}(\theta-\varphi,\theta) - |\boldsymbol{p}|\left(Q_\alpha(\theta-\varphi,\theta)-\hat{\gamma}(\varphi)\right) \\
      &\geq \left(\frac{1}{4\hat{\gamma}(\theta)|\boldsymbol{q}|}\left(|\boldsymbol{p}|Q_\alpha(\theta-\varphi,\theta)\right)^2-|\boldsymbol{p}|Q_\alpha(\theta-\varphi,\theta)\right)+\hat{\gamma}(\varphi)|\boldsymbol{p}|\\
      &\geq \hat{\gamma}(\varphi)|\boldsymbol{p}|-\hat{\gamma}(\theta)|\boldsymbol{q}|.
    \end{aligned}
  \end{equation}
\end{proof}

\begin{remark}\label{rmk:optimality of lee}
  Following the analysis in \cite{bao2024structure,zhang2025stabilized,li2025structure}, taking $\boldsymbol{p}=-\boldsymbol{q}$ (i.e., $\varphi=\theta-\pi$) in \eqref{eqn:loc energy est} yields the necessary condition $3\hat{\gamma}(\theta)\geq \hat{\gamma}(\theta-\pi)$ for the local energy estimate. Comparing this with condition \eqref{eqn:energy cond rlx} for $\alpha=-1$, we conclude that \eqref{eqn:energy cond rlx} is both necessary and sufficient for the local energy estimate when $\alpha=-1$, and is therefore optimal.
\end{remark}

\begin{remark}
 For the isotropic case, i.e. $\hat{\gamma}(\theta)\equiv 1$, it is easy to see that $k_0^\alpha(\theta)\equiv 0$ for any $\alpha\in\mathbb{R}$. This indicates that the minimal stabilizing function $k_\text{min}^\alpha(\theta)\equiv 1-\alpha$.
\end{remark}

With the local energy estimate \eqref{eqn:loc energy est}, the unconditional energy dissipation property of SP-PFEM \eqref{eqn:SP-PFEM} can be proven under Assumption~\ref{asp:stab func well-defined}: \begin{proof}
  Suppose $k(\theta)$ is sufficiently large such that $k(\theta)\geq k_\text{min}^\alpha(\theta)$. 
  
  Then for $\forall 0\leq m\leq M-1$, \begin{equation}
    \begin{aligned}
      &\Bigl(\hat{\boldsymbol{G}}_k^\alpha(\theta^m)\partial_{s^m}\boldsymbol{X}^{m+1},\partial_{s^m}(\boldsymbol{X}^{m+1}-\boldsymbol{X}^m)\Bigr)^h_{\Gamma^m}\\
      &=\sum_{j=1}^N\left[|\boldsymbol{h}_j^m|\Bigl(\hat{\boldsymbol{G}}_k^\alpha(\theta^m)\frac{\boldsymbol{h}_j^{m+1}}{|\boldsymbol{h}_j^m|}\Bigr)\cdot\frac{\boldsymbol{h}_j^{m+1}-\boldsymbol{h}_j^m}{|\boldsymbol{h}_j^m|}\right]\\
      &=\sum_{j=1}^N\left[\frac{1}{|\boldsymbol{h}_j^m|}\Bigl(\hat{\boldsymbol{G}}_k^\alpha(\theta^m)\boldsymbol{h}_j^{m+1}\Bigr)\cdot(\boldsymbol{h}_j^{m+1}-\boldsymbol{h}_j^m)\right].
    \end{aligned}
  \end{equation} Applying the local energy estimate \eqref{eqn:loc energy est} by letting $\boldsymbol{p}=\boldsymbol{h}_j^{m+1},\boldsymbol{q}=\boldsymbol{h}_j^m$, \begin{equation}\label{eqn:energy difference}
    \begin{aligned}
      &\Bigl(\hat{\boldsymbol{G}}_k^\alpha(\theta^m)\partial_{s^m}\boldsymbol{X}^{m+1},\partial_{s^m}(\boldsymbol{X}^{m+1}-\boldsymbol{X}^m)\Bigr)^h_{\Gamma^m}\\
      &\geq \sum_{j=1}^N\left[\hat{\gamma}(\theta_j^{m+1})|\boldsymbol{h}_j^{m+1}|-\hat{\gamma}(\theta_j^m)|\boldsymbol{h}_j^m|\right]\\
      &=\sum_{j=1}^N\hat{\gamma}(\theta_j^{m+1})|\boldsymbol{h}_j^{m+1}|-\sum_{j=1}^N\hat{\gamma}(\theta_j^m)|\boldsymbol{h}_j^m|\\
      &=W^{m+1}-W^m.
    \end{aligned}
  \end{equation} Therefore, by taking $\varphi^h=\mu^{m+1}$ and $\boldsymbol{\omega}^h=\boldsymbol{X}^{m+1}-\boldsymbol{X}^m$ in \eqref{eqn:SP-PFEM}, we conclude that \begin{equation}
    \begin{aligned}
      W^{m+1}-W^m&\leq \Bigl(\hat{\boldsymbol{G}}_k^\alpha(\theta^m)\partial_{s^m}\boldsymbol{X}^{m+1},\partial_{s^m}(\boldsymbol{X}^{m+1}-\boldsymbol{X}^m)\Bigr)^h_{\Gamma^m}\\
      &=\Bigl(\mu^{m+1}\boldsymbol{n}^{m+\frac{1}{2}},\boldsymbol{X}^{m+1}-\boldsymbol{X}^m\Bigr)^h_{\Gamma^m}\\
      &=-\tau\Bigl(\mu^{m+1},\mu^{m+1}\Bigr)^h_{\Gamma^m}\leq 0.
    \end{aligned}
  \end{equation} This implies the energy dissipation property of the SP-PFEM \eqref{eqn:SP-PFEM} as claimed.
\end{proof}

\subsection{Existence of the minimal stabilizing function}\label{sec:existence of min stab func}

In this section, we analyze the existence of the minimal stabilizing function $k_\text{min}^\alpha(\theta)$. 

Assume that $\hat{\gamma}(\theta)\in C^2(2\pi\mathbb{T})$ satisfies 
\begin{equation}
  3\hat{\gamma}(\theta)-\hat{\gamma}(\theta-\pi)\geq 0,\qquad\forall\theta\in 2\pi\mathbb{T},
\end{equation}
and let $c\coloneqq\inf\limits_{\theta\in 2\pi\mathbb{T}}\bigl[3\hat{\gamma}(\theta)-\hat{\gamma}(\theta-\pi)\bigr]$. Since $k_\text{min}^\alpha(\theta) = k_0^\alpha(\theta) - (\alpha - 1)\hat{\gamma}(\theta)$, the existence of $k_\text{min}^\alpha(\theta)$ is equivalent to the boundedness of $k_0^\alpha(\theta)$. We therefore focus on the latter. Specifically, we establish a necessary and sufficient condition for the boundedness of $k_0^\alpha(\theta)$ in the critical case where $c=0$, and derive an upper bound estimate for $k_0^\alpha(\theta)$ when $c>0$.

\subsubsection{Boundedness of $k_0^\alpha(\theta)$}

\begin{theorem}\label{thm:existence of k0 c=0}
  If $c=0$, then $k_0^\alpha(\theta)$ exists if and only if one of the following conditions holds:
  \begin{enumerate}
    \item $\alpha=-1$;\label{eqn:cond 1}
    \item $\hat{\gamma}^\prime(\theta^*)=0$ whenever $3\hat{\gamma}(\theta^*)=\hat{\gamma}(\theta^*-\pi)$.\label{eqn:cond 2}
  \end{enumerate}
\end{theorem}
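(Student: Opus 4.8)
The plan is to analyze the defining inequality $4\hat{\gamma}(\theta)P_{\alpha,a}(\phi,\theta)\geq Q_\alpha^2(\phi,\theta)$ in the regime where it is \emph{tightest}, namely near $\phi=\pi$ (equivalently $\varphi=\theta-\pi$), since this is the direction that produced the necessary condition $3\hat{\gamma}(\theta)\geq\hat{\gamma}(\theta-\pi)$ in Remark~\ref{rmk:optimality of lee}. When $c=0$ the infimum $3\hat{\gamma}(\theta)=\hat{\gamma}(\theta-\pi)$ is attained at some $\theta^*$, and the question of whether a \emph{finite} $a$ can still satisfy the inequality for all $\phi$ hinges on matching the behaviour of both sides to second order in $\psi\coloneqq\phi-\pi$ near such $\theta^*$. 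First I would fix $\theta=\theta^*$, expand $P_{\alpha,a}(\pi+\psi,\theta^*)$ and $Q_\alpha(\pi+\psi,\theta^*)$ in powers of $\psi$, and compare the constant, linear, and quadratic coefficients of $4\hat{\gamma}(\theta^*)P_{\alpha,a}-Q_\alpha^2$. Note $\sin 2\phi=\sin 2\psi$ and $\sin^2\phi=\sin^2\psi$, so $P_{\alpha,a}(\pi+\psi,\theta^*)=\hat{\gamma}(\theta^*)+\frac{\alpha-1}{2}\hat{\gamma}'(\theta^*)\sin 2\psi+a\sin^2\psi$, while $Q_\alpha(\pi+\psi,\theta^*)=\hat{\gamma}(\theta^*-\pi+\psi)-\hat{\gamma}(\theta^*)\cos\psi-\alpha\hat{\gamma}'(\theta^*)\sin\psi$.

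The key computation: at $\psi=0$, both sides of the candidate inequality equal (after using $3\hat\gamma(\theta^*)=\hat\gamma(\theta^*-\pi)$, hence $Q_\alpha(\pi,\theta^*)=\hat\gamma(\theta^*-\pi)-\hat\gamma(\theta^*)=2\hat\gamma(\theta^*)$ and $4\hat\gamma(\theta^*)^2=Q_\alpha(\pi,\theta^*)^2$) — so the constant term of $F_a(\psi)\coloneqq 4\hat{\gamma}(\theta^*)P_{\alpha,a}(\pi+\psi,\theta^*)-Q_\alpha(\pi+\psi,\theta^*)^2$ vanishes identically in $a$. Therefore the linear coefficient $F_a'(0)$ must vanish, and this is the crucial obstruction: the $a\sin^2\psi$ term contributes nothing at linear order, so $F_a'(0)$ is \emph{independent of $a$}. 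I would compute it and show $F_a'(0)=0$ forces a relation between $\hat\gamma'(\theta^*-\pi)$, $\hat\gamma'(\theta^*)$, and $\alpha$. Differentiating $3\hat\gamma(\theta)-\hat\gamma(\theta-\pi)$ at the interior minimum $\theta^*$ gives $3\hat\gamma'(\theta^*)=\hat\gamma'(\theta^*-\pi)$; combined with the condition extracted from $F_a'(0)=0$, one finds that either $\alpha=-1$ or $\hat\gamma'(\theta^*)=0$. Roughly: $F_a'(0)$ will come out proportional to $(\alpha+1)\hat\gamma'(\theta^*)$ (after substituting $\hat\gamma'(\theta^*-\pi)=3\hat\gamma'(\theta^*)$ and $Q_\alpha(\pi,\theta^*)=2\hat\gamma(\theta^*)$), so it vanishes exactly under conditions \ref{eqn:cond 1} or \ref{eqn:cond 2}.

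For the converse (sufficiency), under \ref{eqn:cond 1} or \ref{eqn:cond 2} I would show that not only does the linear term vanish at every critical $\theta^*$, but the second-order term $F_a''(0)$ can be made nonnegative by taking $a$ large — because $F_a''(0)$ \emph{does} depend on $a$ (the coefficient of $a$ being $4\hat\gamma(\theta^*)\cdot 2>0$ from $(\sin^2\psi)''|_0=2$) — and then a compactness argument on $2\pi\mathbb{T}$ promotes this local nonnegativity near the (closed) set of critical angles, together with the strict positivity of $3\hat\gamma-\hat\gamma(\cdot-\pi)$ away from that set, to a uniform bound $k_0^\alpha(\theta)\le$ const. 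Concretely, split $2\pi\mathbb{T}$ into a neighbourhood $U$ of $\{\theta:3\hat\gamma(\theta)=\hat\gamma(\theta-\pi)\}$ and its complement; on the complement the problem is the $c>0$ situation, handled by the upcoming upper-bound estimate; on $U$, the Taylor analysis shows $F_a\ge 0$ for $a$ large. The main obstacle I anticipate is making the local-to-global passage rigorous and uniform in $\theta$: one must ensure the $a$ needed to dominate in each small $\psi$-neighbourhood does not blow up as $\theta\to\theta^*$, which requires controlling the remainder in Taylor's theorem uniformly — plausibly via $C^2$ continuity of $\hat\gamma$ and a second-order Taylor expansion with an explicit integral remainder, but the bookkeeping (especially distinguishing the cases $\hat\gamma'(\theta^*)=0$ versus $\alpha=-1$, and the degenerate sub-case where both the linear \emph{and} quadratic terms of $F_a$ vanish independently of $a$) is where the real work lies.
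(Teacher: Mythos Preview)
Your necessity argument is essentially identical to the paper's: expand $4\hat\gamma(\theta^*)P_{\alpha,a}-Q_\alpha^2$ at $\phi=\pi$, use the first-order optimality $3\hat\gamma'(\theta^*)=\hat\gamma'(\theta^*-\pi)$ at a critical $\theta^*$, and observe that the linear coefficient works out to $8(\alpha+1)\hat\gamma(\theta^*)\hat\gamma'(\theta^*)$ and is independent of $a$, forcing $(\alpha+1)\hat\gamma'(\theta^*)=0$. For sufficiency the paper simply defers to \cite{li2025structure} (case $\alpha=-1$) and \cite{zhang2025stabilized} (condition~\ref{eqn:cond 2}) without giving details, so your Taylor-plus-compactness sketch is actually more explicit than what appears here.
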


\begin{proof}
  \textit{Sufficiency.} The proofs for conditions \eqref{eqn:cond 1} and \eqref{eqn:cond 2} follow arguments similar to those in \cite{li2025structure} and \cite{zhang2025stabilized}, respectively, and are omitted for brevity.
  
  \textit{Necessity.} Suppose $3\hat{\gamma}(\theta^*)=\hat{\gamma}(\theta^*-\pi)$ for some $\theta^*\in 2\pi\mathbb{T}$. Then $\hat{\gamma}(\theta-\pi)/\hat{\gamma}(\theta)$ attains its maximum at $\theta=\theta^*$, which implies
  \begin{equation}
    \left.\frac{d}{d\theta}\left(\frac{\hat{\gamma}(\theta-\pi)}{\hat{\gamma}(\theta)}\right)\right|_{\theta=\theta^*}
    =\frac{\hat{\gamma}^\prime(\theta^*-\pi)\hat{\gamma}(\theta^*)-\hat{\gamma}(\theta^*-\pi)\hat{\gamma}^\prime(\theta^*)}{\hat{\gamma}^2(\theta^*)}
    =\frac{\hat{\gamma}^\prime(\theta^*-\pi)-3\hat{\gamma}^\prime(\theta^*)}{\hat{\gamma}(\theta^*)}=0.
  \end{equation}
  Thus
  \begin{equation}\label{eqn:der formula}
    3\hat{\gamma}^\prime(\theta^*)=\hat{\gamma}^\prime(\theta^*-\pi)\quad\text{whenever}\quad 3\hat{\gamma}(\theta^*)=\hat{\gamma}(\theta^*-\pi).
  \end{equation}
  
  Define $F_{\alpha}(\phi,\theta)\coloneqq4\hat{\gamma}(\theta)P_{\alpha, k_\text{min}^\alpha}(\phi,\theta)-Q^2_\alpha(\phi,\theta)$, we have
  \begin{equation}\label{eqn:F geq 0}
    F_{\alpha}(\phi,\theta)\geq 0,\qquad\forall\phi, \theta\in 2\pi\mathbb{T}.
  \end{equation}
  Applying the mean value theorem to $F_{\alpha}(\phi,\theta^*)$ at $\phi=\pi$, we have
  \begin{equation}\label{eqn:expansion of F}
    \begin{aligned}
      F_{\alpha}(\phi,\theta^*)&=\left(3\hat{\gamma}(\theta^*)-\hat{\gamma}(\theta^*-\pi)\right)\left(\hat{\gamma}(\theta^*)+\hat{\gamma}(\theta^*-\pi)\right)\\
      &\quad+\left[4(\alpha-1)\hat{\gamma}(\theta^*)\hat{\gamma}^\prime(\theta^*)+2\left(\hat{\gamma}(\theta^*-\pi)-\hat{\gamma}(\theta^*)\right)\left(\hat{\gamma}^\prime(\theta^*-\pi)+\alpha\hat{\gamma}^\prime(\theta^*)\right)\right](\phi-\pi)\\
      &\quad+O((\phi-\pi)^2).
    \end{aligned}
  \end{equation}
  Substituting $3\hat{\gamma}(\theta^*)=\hat{\gamma}(\theta^*-\pi)$ and \eqref{eqn:der formula} into \eqref{eqn:expansion of F}, we obtain
  \begin{equation}
    \begin{aligned}
      F_{\alpha}(\phi,\theta^*)&=\left[4(\alpha - 1)\hat{\gamma}(\theta^*)\hat{\gamma}^\prime(\theta^*)+2\left(3\hat{\gamma}(\theta^*) - \hat{\gamma}(\theta^*)\right) \left(3\hat{\gamma}^\prime(\theta^*) + \alpha\hat{\gamma}^\prime(\theta^*)\right)\right](\phi-\pi)\\
      & \qquad  + O((\phi-\pi)^2) \\
      &=8(\alpha+1)\hat{\gamma}(\theta^*)\hat{\gamma}^\prime(\theta^*)(\phi-\pi)++ O((\phi-\pi)^2).
    \end{aligned}
  \end{equation}
  
  Therefore, $(\alpha+1)\hat{\gamma}^\prime(\theta^*)=0$, yielding either $\alpha=-1$ or $\hat{\gamma}^\prime(\theta^*)=0$.
\end{proof}

Theorem~\ref{thm:existence of k0 c=0} unifies the main results from \cite{zhang2025stabilized} and \cite{li2025structure}, and reveals that $\alpha=-1$ is the optimal choice among all surface energy matrices. Indeed, as noted in Remark~\ref{rmk:optimality of lee}, condition \eqref{eqn:energy cond rlx} $3\hat{\gamma}(\theta)-\hat{\gamma}(\theta-\pi)\geq 0$ is both necessary and sufficient for the local energy estimate when $\alpha=-1$. In contrast, for $\alpha \neq -1$, ensuring energy stability of SP-PFEM under condition \eqref{eqn:energy cond rlx} requires either additional assumptions such as $\hat{\gamma}^\prime(\theta^*)=0$, or strengthening the condition to \eqref{eqn:energy cond} $3\hat{\gamma}(\theta)-\hat{\gamma}(\theta-\pi)> 0$.


\subsubsection{A global upper bound of $k_0^\alpha(\theta)$}

To establish a global upper bound of $k_0^\alpha(\theta)$, we introduce the following lemmas:

\begin{lemma}[\cite{li2025structure}]\label{lma:derivative est}
  Let $f$ be a non-negative $C^2$ function on $2\pi\mathbb{T}$. Then for any positive constant $C\geq \sup\limits_{2\pi\mathbb{T}}|f^{\prime\prime}|$, we have \begin{equation}\label{eqn:derivative est}
    -f(x) \leq f^\prime(x)y +\frac{C}{2}y^2,\qquad\forall x,y\in 2\pi\mathbb{T}.
  \end{equation}
\end{lemma}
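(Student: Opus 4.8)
The statement to prove is Lemma~\ref{lma:derivative est}: for a non-negative $C^2$ function $f$ on $2\pi\mathbb{T}$ and any constant $C\geq\sup_{2\pi\mathbb{T}}|f''|$, one has $-f(x)\leq f'(x)y+\tfrac{C}{2}y^2$ for all $x,y$. The plan is to use Taylor's theorem with Lagrange remainder. Fix $x$ and $y$, and expand $f$ around $x$: there exists $\eta$ between $x$ and $x+y$ such that
\begin{equation}
  f(x+y)=f(x)+f'(x)y+\tfrac{1}{2}f''(\eta)y^2.
\end{equation}
Since $f\geq 0$ everywhere, $f(x+y)\geq 0$, so rearranging gives
\begin{equation}
  0\leq f(x)+f'(x)y+\tfrac{1}{2}f''(\eta)y^2,
\end{equation}
i.e. $-f(x)\leq f'(x)y+\tfrac{1}{2}f''(\eta)y^2$.

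The only remaining point is to replace $f''(\eta)$ by $C$. Here one must be slightly careful about the sign of $y^2$ versus the sign of $f''(\eta)$: since $y^2\geq 0$ and $f''(\eta)\leq|f''(\eta)|\leq\sup_{2\pi\mathbb{T}}|f''|\leq C$, we get $\tfrac{1}{2}f''(\eta)y^2\leq\tfrac{C}{2}y^2$, and therefore $-f(x)\leq f'(x)y+\tfrac{C}{2}y^2$, which is the claim.

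A small subtlety worth addressing explicitly: the variables live on the torus $2\pi\mathbb{T}=\mathbb{R}/2\pi\mathbb{Z}$, so "between $x$ and $x+y$" should be understood via a lift to $\mathbb{R}$ — pick any real representatives $\tilde x$ of $x$ and $\tilde y$ of $y$, apply the classical one-dimensional Taylor theorem on the interval $[\tilde x,\tilde x+\tilde y]$ (or $[\tilde x+\tilde y,\tilde x]$), and note that $f$, $f'$, $f''$ are all well-defined $2\pi$-periodic functions so their values do not depend on the choice of representative. There is no real obstacle here; the lemma is an elementary consequence of Taylor expansion, and the estimate $\tfrac{1}{4c}t^2-t\geq-c$ used elsewhere in the paper is essentially the same inequality specialized to $f(t)=c$ constant. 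The one thing to state clearly is the direction of the inequality $f''(\eta)\leq C$ (we only need the upper bound on $f''$, the absolute value is used so that a single constant $C$ works uniformly).
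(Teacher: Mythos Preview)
Your proof is correct. The paper does not actually supply its own proof of this lemma --- it is quoted from \cite{li2025structure} --- and your Taylor-expansion-with-Lagrange-remainder argument is the natural (and presumably identical) way to establish it.
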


\begin{lemma}[Estimation of $Q_\alpha(\phi,\theta)$]\label{lma:est of Q}
  Suppose $c>0$. Then for $Q_\alpha(\phi,\theta)$ defined in \eqref{eqn:aux func b}, we have \begin{equation}\label{eqn:est of Q}
    |Q_\alpha(\phi,\theta)|\leq |P_{\alpha, A}(\phi,\theta)+\hat{\gamma}(\theta)|,\qquad\forall\phi\in 2\pi\mathbb{T},
  \end{equation} where $A(\theta)$ is defined as \begin{subequations}
    \begin{align}
      &A(\theta)\coloneqq \frac{\pi^2}{8}\left(C_\alpha\sup_{2\pi\mathbb{T}}|\hat{\gamma}^{\prime\prime}|+(3|\alpha|+2)|\hat{\gamma}^\prime(\theta)|+\hat{\gamma}(\theta)+B(\theta)\right),\\
      &B(\theta)\coloneqq \frac{2}{c}(\alpha+1)^2|\hat{\gamma}^\prime(\theta)|^2,\qquad C_\alpha=\max\{5,\frac{4}{\pi^2}(2|\alpha|+1)^2\}.
    \end{align}
 \end{subequations} 
\end{lemma}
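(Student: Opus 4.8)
The goal is to prove Lemma~\ref{lma:est of Q}, namely that when $c>0$ we have $|Q_\alpha(\phi,\theta)|\leq |P_{\alpha,A}(\phi,\theta)+\hat{\gamma}(\theta)|$ for all $\phi$, with $A(\theta)$ as specified. The plan is to show both quantities on the right are automatically non-negative (so the absolute value bars there are harmless) and then bound $Q_\alpha$ above and below by $P_{\alpha,A}+\hat{\gamma}(\theta)$.

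First I would rewrite $Q_\alpha(\phi,\theta)=\hat{\gamma}(\theta-\phi)+\hat{\gamma}(\theta)\cos\phi+\alpha\hat{\gamma}^\prime(\theta)\sin\phi$ and compare it with $P_{\alpha,a}(\phi,\theta)+\hat{\gamma}(\theta)=2\hat{\gamma}(\theta)+\frac{\alpha-1}{2}\hat{\gamma}^\prime(\theta)\sin 2\phi+a\sin^2\phi$. The natural splitting is
\begin{equation*}
  \bigl(P_{\alpha,a}+\hat{\gamma}(\theta)\bigr)\pm Q_\alpha
  =\hat{\gamma}(\theta)(2\pm\cos\phi)\mp\hat{\gamma}(\theta-\phi)+\frac{\alpha-1}{2}\hat{\gamma}^\prime(\theta)\sin 2\phi\mp\alpha\hat{\gamma}^\prime(\theta)\sin\phi+a\sin^2\phi.
\end{equation*}
The idea is to treat the terms $-\hat{\gamma}(\theta-\phi)$ (for the $+Q$ estimate) and the trigonometric cross terms by Lemma~\ref{lma:derivative est} and elementary inequalities, absorbing everything into the ``slack'' terms $\hat{\gamma}(\theta)(1\mp\cos\phi+\dots)$ and $a\sin^2\phi$. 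Writing $\psi=\phi$ or $\psi=\pi-\phi$ as appropriate and using $1-\cos\phi=2\sin^2(\phi/2)$, $|\sin\phi|\le|\phi|$-type bounds on $2\pi\mathbb{T}$ (represented on $[-\pi,\pi]$), one converts each offending term into a constant multiple of $\psi^2$ or $\sin^2(\psi/2)$. The positivity constant $c$ enters precisely when controlling the term coming from $3\hat{\gamma}(\theta)-\hat{\gamma}(\theta-\pi)\ge c$: near $\phi=\pi$ the combination $\hat{\gamma}(\theta)(2+\cos\phi)-\hat{\gamma}(\theta-\phi)$ is $\ge\hat{\gamma}(\theta)$ minus something controlled by $c$ and $\hat{\gamma}^\prime$, which is where $B(\theta)=\frac{2}{c}(\alpha+1)^2|\hat{\gamma}^\prime(\theta)|^2$ is manufactured via a Young/completing-the-square step $|xy|\le \frac{c}{2\cdot(\text{stuff})}x^2+\frac{(\text{stuff})}{2c}y^2$. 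The factor $\pi^2/8$ in $A(\theta)$ is the standard constant from $\sin^2\psi\ge (4/\pi^2)\psi^2$ type estimates on $|\psi|\le\pi/2$, combined with $\sin^2(\phi/2)\ge\phi^2/\pi^2$; I would be careful to case-split on whether $|\phi|\le\pi/2$ or not and on the sign of $\phi-\pi$, since the linearizations of $Q_\alpha$ behave differently near $\phi=0$ and near $\phi=\pm\pi$.

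For the non-negativity of $P_{\alpha,A}+\hat{\gamma}(\theta)$, I would note that once $|Q_\alpha|\le P_{\alpha,A}+\hat{\gamma}(\theta)$ is established as a two-sided bound the right side is forced to be $\ge|Q_\alpha|\ge0$, so the outer absolute value can be dropped; alternatively one checks directly that $A(\theta)\ge\frac{|\alpha-1|}{2}|\hat{\gamma}^\prime(\theta)|$ so that $P_{\alpha,A}\ge \hat{\gamma}(\theta)-\frac{|\alpha-1|}{2}|\hat{\gamma}^\prime(\theta)|+A(\theta)\sin^2\phi\ge -\hat{\gamma}(\theta)$ after using $A(\theta)$'s lower bound, giving $P_{\alpha,A}+\hat{\gamma}(\theta)\ge 0$. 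The cleanest route is the former.

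The main obstacle will be bookkeeping the constants so that the explicit $A(\theta)$, $B(\theta)$, $C_\alpha$ in the statement come out exactly: one must track how each trigonometric term contributes, make the right choices in the Young inequality that produces $B(\theta)$ from $c$ and $(\alpha+1)^2|\hat{\gamma}^\prime(\theta)|^2$, and verify that $C_\alpha=\max\{5,\frac{4}{\pi^2}(2|\alpha|+1)^2\}$ is large enough to dominate $\sup|\hat{\gamma}^{\prime\prime}|$ after applying Lemma~\ref{lma:derivative est} to $f=\hat{\gamma}$ (this is where the ``$5$'' and the $\alpha$-dependent alternative come from — one for the $\hat{\gamma}(\theta-\phi)$ Taylor remainder and one for reconciling the $\sin\phi$ versus $\phi$ discrepancy when $\alpha$ is large). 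I expect the proof will be a somewhat long but routine estimate once the $\pm$ splitting above and the case analysis on the location of $\phi$ are fixed; the conceptual content is entirely in the observation that the coefficient of the linear-in-$(\phi-\pi)$ term of $F_\alpha$ vanishing (Theorem~\ref{thm:existence of k0 c=0}) is what, when $c>0$, can instead be absorbed quantitatively into a finite $A(\theta)$.
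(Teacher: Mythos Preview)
Your proposal is correct and follows essentially the same approach as the paper: the two-sided $\pm$ splitting, the case analysis on $|\phi|\le\pi/2$ versus $|\phi-\pi|<\pi/2$, the use of Lemma~\ref{lma:derivative est} for the lower bound and for $3\hat{\gamma}(\theta)-\hat{\gamma}(\theta-\pi)-c$ near $\phi=\pi$, the Young/completing-the-square step producing $B(\theta)$, and the $|\psi|\le\frac{\pi}{2}|\sin\psi|$ bounds producing the $\pi^2/8$ and $C_\alpha$ constants are exactly what the paper does. One small slip: in your displayed splitting the sign on $\hat{\gamma}(\theta-\phi)$ should be $\pm$ rather than $\mp$ (so the hard $-\hat{\gamma}(\theta-\phi)$ term appears in the $-Q_\alpha$ direction, i.e.\ the \emph{upper} bound on $Q_\alpha$), but your surrounding discussion makes clear you have the roles straight.
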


\begin{proof}
We begin by proving the lower bound of $Q_a(\phi,\theta)$. Applying Lemma~\ref{lma:derivative est}, we obtain \begin{equation}
    \begin{aligned}
      Q_\alpha(\phi,\theta)+P_{\alpha, 0}(\phi,\theta)+\hat{\gamma}(\theta)&=\hat{\gamma}(\theta-\phi)+\hat{\gamma}(\theta)(2+\cos\phi)+\hat{\gamma}^\prime(\theta)(\alpha\sin\phi+\frac{\alpha-1}{2}\sin 2\phi)\\
      &\geq \hat{\gamma}(\theta)+\hat{\gamma}^\prime(\theta)(\alpha+(\alpha-1)\cos\phi)\sin\phi\\
      &\geq \hat{\gamma}(\theta)-(2|\alpha|+1)|\hat{\gamma}^\prime(\theta)||\sin\phi|\\
      &\geq-\frac{(2|\alpha|+1)^2}{2}\sup_{2\pi\mathbb{T}}|\hat{\gamma}^{\prime\prime}|\sin^2\phi.\\
    \end{aligned}
  \end{equation} Combining with the facts $A(\theta)\geq\frac{(2|\alpha|+1)^2}{2}\sup\limits_{2\pi\mathbb{T}}|\hat{\gamma}^{\prime\prime}|$ and noting $P_{\alpha, A}(\phi,\theta)=P_{\alpha, 0}(\phi,\theta)+A(\theta)\sin^2\phi$, we deduce \begin{equation}
    Q_\alpha(\phi,\theta)\geq -P_{\alpha, 0}(\phi, \theta) - \hat{\gamma}(\theta) - A(\theta)\sin^2\phi = -P_{\alpha, A}(\phi,\theta)-\hat{\gamma}(\theta),\qquad\forall\phi\in 2\pi\mathbb{T}.
  \end{equation}

  The remaining task is to determine the upper bound of $Q_\alpha(\phi,\theta)$. We divide the proof into two cases: 

  Case 1: For $|\phi|\leq\frac{\pi}{2}$. By adopting the mean value theorem to $Q_\alpha(\cdot,\theta)-P_{\alpha, 0}(\cdot,\theta)-\hat{\gamma}(\theta)$ on $[0,\phi]$, then there exists a $\xi\in[0,\phi]$ such that \begin{equation}
    \begin{aligned}
      &Q_\alpha(\phi,\theta)-P_{\alpha, 0}(\phi,\theta)-\hat{\gamma}(\theta)\\
      &=\frac{1}{2}\left(\hat{\gamma}^{\prime\prime}(\theta-\xi)-\hat{\gamma}(\theta)\cos\xi-\alpha\hat{\gamma}^\prime(\theta)\sin\xi+2(\alpha-1)\hat{\gamma}^\prime(\theta)\sin 2\xi\right)\phi^2\\
      &\leq \frac{1}{2}\left(\sup_{2\pi\mathbb{T}}|\hat{\gamma}^{\prime\prime}|+(3|\alpha|+2)|\hat{\gamma}^\prime(\theta)|+\hat{\gamma}(\theta)\right)\left(\frac{\pi^2}{4}\sin^2\phi\right)\\
      &\leq A(\theta)\sin^2\phi.
    \end{aligned}
  \end{equation} The penultimate inequality comes from the fact that $|\phi|\leq \frac{\pi}{2}|\sin\phi|,\,\,\forall|\phi|\leq\frac{\pi}{2}$.

  Case 2: For $|\phi-\pi|<\frac{\pi}{2}$. Again, we apply the mean value theorem to $Q_\alpha(\cdot,\theta)-P_{\alpha, 0}(\cdot,\theta)-\hat{\gamma}(\theta)$ on $[\phi,\pi]$. Then there exists a $\xi\in[\phi,\pi]$ such that \begin{equation}
    \begin{aligned}
      &Q_\alpha(\phi,\theta)-P_{\alpha, 0}(\phi,\theta)-\hat{\gamma}(\theta)\\
      &=\left(\hat{\gamma}(\theta-\pi)-3\hat{\gamma}(\theta)\right)-\left(\hat{\gamma}^\prime(\theta-\pi)+(2\alpha-1)\hat{\gamma}^\prime(\theta)\right)(\phi-\pi)\\
      &\quad+\frac{1}{2}\left(\hat{\gamma}^{\prime\prime}(\theta-\xi)-\hat{\gamma}(\theta)\cos\xi-\alpha\hat{\gamma}^\prime(\theta)\sin\xi+2(\alpha-1)\hat{\gamma}^\prime(\theta)\sin 2\xi\right) (\phi-\pi)^2.
    \end{aligned}
  \end{equation} 

   By the definition of $c$, we have $3\hat{\gamma}(\theta)-\hat{\gamma}(\theta-\pi)\geq c$. Applying Lemma~\ref{lma:derivative est} to $3\hat{\gamma}(\theta)-\hat{\gamma}(\theta-\pi)-c$, we obtain
   \begin{equation}
    \begin{aligned}
      &\hat{\gamma}(\theta-\pi)-3\hat{\gamma}(\theta) \\
      & = -c -\left(3\hat{\gamma}(\theta)-\hat{\gamma}(\theta-\pi)-c\right)\\
      &\leq -c + \left(\hat{\gamma}^\prime(\theta-\pi)-3\hat{\gamma}^\prime(\theta)\right)(\phi-\pi)+\frac{1}{2}\sup\limits_{2\pi\mathbb{T}}|3\hat{\gamma}^{\prime\prime}(\theta)-\hat{\gamma}^{\prime\prime}(\theta-\pi)|(\phi-\pi)^2  \\
      & \leq -c +\left(\hat{\gamma}^\prime(\theta-\pi)-3\hat{\gamma}^\prime(\theta)\right)(\phi-\pi)+2\sup\limits_{2\pi\mathbb{T}}|\hat{\gamma}^{\prime\prime}|(\phi-\pi)^2.
    \end{aligned}
   \end{equation}
   Therefore, \begin{equation}\label{eqn:Q est at pi}
    \begin{aligned}
      &Q_\alpha(\phi,\theta)-P_{\alpha, 0}(\phi,\theta)-\hat{\gamma}(\theta)\\
      &\leq -c +\left(\hat{\gamma}^\prime(\theta-\pi)-3\hat{\gamma}^\prime(\theta)\right)(\phi-\pi)+2\sup\limits_{2\pi\mathbb{T}}|\hat{\gamma}^{\prime\prime}|(\phi-\pi)^2 \\
      & \quad -\left(\hat{\gamma}^\prime(\theta-\pi)+(2\alpha-1)\hat{\gamma}^\prime(\theta)\right)(\phi-\pi)\\
      &\quad+\frac{1}{2}\left(\hat{\gamma}^{\prime\prime}(\theta-\xi)-\hat{\gamma}(\theta)\cos\xi-\alpha\hat{\gamma}^\prime(\theta)\sin\xi+2(\alpha-1)\hat{\gamma}^\prime(\theta)\sin 2\xi\right) (\phi-\pi)^2 \\
      &\leq -c-2(\alpha+1)\hat{\gamma}^\prime(\theta)(\phi-\pi)\\
      &\quad+\frac{1}{2}\left(5\sup_{2\pi\mathbb{T}}|\hat{\gamma}^{\prime\prime}|+(3|\alpha|+2)|\hat{\gamma}^\prime(\theta)|+\hat{\gamma}(\theta)\right)(\phi-\pi)^2.
    \end{aligned}
  \end{equation}

  Since $c>0$, the lower order term in the right-hand side of \eqref{eqn:Q est at pi} can be controlled by a quadratic term, i.e. \begin{equation}
    -c-2(\alpha+1)\hat{\gamma}^\prime(\theta)(\phi-\pi)\leq \frac{1}{c}(\alpha+1)^2|\hat{\gamma}^\prime(\theta)|^2(\phi-\pi)^2.
  \end{equation} Combining with the fact that $|\phi-\pi|\leq \frac{\pi}{2}|\sin\phi|,\,\,\forall|\phi-\pi|\leq\frac{\pi}{2}$, we obtain the desired inequality \begin{equation}\label{eqn:Q upper bound}
    Q_\alpha(\phi,\theta)\leq P_{\alpha, A}(\phi,\theta)+\hat{\gamma}(\theta),\qquad\forall \phi\in 2\pi\mathbb{T}.
  \end{equation} 
\end{proof}

\begin{theorem}\label{thm:existence of k0 c>0}
  If $c>0$. For any $\alpha\in\mathbb{R}$, $k_0^\alpha(\theta)$ given in \eqref{eqn:def of min stab func} admits the following upper bound: \begin{equation}\label{eqn:bound of k0}
    k_0^\alpha(\theta)\leq\frac{1}{4\hat{\gamma}(\theta)}\left[A^2(\theta)+4\hat{\gamma}(\theta)A(\theta)+(\alpha-1)^2|\hat{\gamma}^\prime(\theta)|^2\right]<+\infty,\qquad\forall\theta\in 2\pi\mathbb{T}.
  \end{equation}
\end{theorem}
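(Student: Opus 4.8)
The plan is to bypass the awkward quadratic $Q_\alpha^2$ by invoking Lemma~\ref{lma:est of Q}, and then simply exhibit an explicit admissible value of $a$ in the infimum \eqref{eqn:def of min stab func b}. Since $c>0$, Lemma~\ref{lma:est of Q} gives $|Q_\alpha(\phi,\theta)|\le |P_{\alpha,A}(\phi,\theta)+\hat{\gamma}(\theta)|$, hence $Q_\alpha^2(\phi,\theta)\le \bigl(P_{\alpha,A}(\phi,\theta)+\hat{\gamma}(\theta)\bigr)^2$ for all $\phi$. Consequently, to bound $k_0^\alpha(\theta)$ it suffices to produce a finite $a\ge 0$ for which $4\hat{\gamma}(\theta)P_{\alpha,a}(\phi,\theta)\ge \bigl(P_{\alpha,A}(\phi,\theta)+\hat{\gamma}(\theta)\bigr)^2$ holds for every $\phi\in 2\pi\mathbb{T}$; any such $a$ then satisfies $k_0^\alpha(\theta)\le a$.

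First I would write $P_{\alpha,a}(\phi,\theta)=P_{\alpha,A}(\phi,\theta)+(a-A(\theta))\sin^2\phi$ and expand: a short computation shows $4\hat{\gamma}P_{\alpha,a}-(P_{\alpha,A}+\hat{\gamma})^2 = 4\hat{\gamma}(\theta)(a-A(\theta))\sin^2\phi - (P_{\alpha,A}(\phi,\theta)-\hat{\gamma}(\theta))^2$, so the target inequality is equivalent to $4\hat{\gamma}(\theta)(a-A(\theta))\sin^2\phi\ge (P_{\alpha,A}(\phi,\theta)-\hat{\gamma}(\theta))^2$. Next, using $\sin 2\phi=2\sin\phi\cos\phi$, I would factor
\[
  P_{\alpha,A}(\phi,\theta)-\hat{\gamma}(\theta)=\sin\phi\bigl[(\alpha-1)\hat{\gamma}^\prime(\theta)\cos\phi+A(\theta)\sin\phi\bigr],
\]
so the right-hand side carries an explicit $\sin^2\phi$ factor matching the left. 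For $\sin\phi=0$ both sides vanish; otherwise, cancelling $\sin^2\phi$ reduces the requirement to $4\hat{\gamma}(\theta)(a-A(\theta))\ge \bigl[(\alpha-1)\hat{\gamma}^\prime(\theta)\cos\phi+A(\theta)\sin\phi\bigr]^2$ for all $\phi$.

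Finally, the elementary bound $u\cos\phi+v\sin\phi\le\sqrt{u^2+v^2}$ gives $\bigl[(\alpha-1)\hat{\gamma}^\prime(\theta)\cos\phi+A(\theta)\sin\phi\bigr]^2\le (\alpha-1)^2|\hat{\gamma}^\prime(\theta)|^2+A^2(\theta)$ uniformly in $\phi$, so the choice
\[
  a=A(\theta)+\frac{(\alpha-1)^2|\hat{\gamma}^\prime(\theta)|^2+A^2(\theta)}{4\hat{\gamma}(\theta)}=\frac{1}{4\hat{\gamma}(\theta)}\Bigl[A^2(\theta)+4\hat{\gamma}(\theta)A(\theta)+(\alpha-1)^2|\hat{\gamma}^\prime(\theta)|^2\Bigr]
\]
satisfies the required inequality; this is precisely the right-hand side of \eqref{eqn:bound of k0}. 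Since $\hat{\gamma}\in C^2(2\pi\mathbb{T})$ and $c>0$, the quantity $A(\theta)$ (built from $\sup_{2\pi\mathbb{T}}|\hat{\gamma}^{\prime\prime}|$, $\hat{\gamma}(\theta)$, $\hat{\gamma}^\prime(\theta)$ and $1/c$) is finite and nonnegative, and $\hat{\gamma}(\theta)>0$, so $a$ is a finite admissible value, giving $k_0^\alpha(\theta)\le a<+\infty$.

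The only genuinely delicate point inside this argument is the algebraic reduction — spotting that $(P_{\alpha,A}-\hat{\gamma})^2$ factors through $\sin^2\phi$ so that it is exactly absorbed by the $\sin^2\phi$ appearing in $P_{\alpha,a}-P_{\alpha,A}$ — after which the estimate is a single application of Cauchy–Schwarz. The part where $c>0$ is truly essential (controlling $|Q_\alpha|$ by $|P_{\alpha,A}+\hat{\gamma}|$, via the term $B(\theta)=\tfrac{2}{c}(\alpha+1)^2|\hat{\gamma}^\prime(\theta)|^2$) has already been carried out in Lemma~\ref{lma:est of Q}.
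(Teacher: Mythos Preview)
Your proof is correct and follows essentially the same route as the paper: invoke Lemma~\ref{lma:est of Q} to replace $Q_\alpha^2$ by $(P_{\alpha,A}+\hat{\gamma})^2$, use the identity $4\hat{\gamma}P_{\alpha,a}-(P_{\alpha,A}+\hat{\gamma})^2=4\hat{\gamma}(a-A)\sin^2\phi-(P_{\alpha,A}-\hat{\gamma})^2$, factor $P_{\alpha,A}-\hat{\gamma}=\sin\phi[(\alpha-1)\hat{\gamma}'\cos\phi+A\sin\phi]$, and finish with the bound $|u\cos\phi+v\sin\phi|\le\sqrt{u^2+v^2}$. The algebra and the final choice of $a$ match the paper exactly.
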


\begin{proof}
  By Lemma~\ref{lma:est of Q}, we have \begin{equation}
    Q_\alpha^2(\phi,\theta)\leq \left(P_{\alpha, A}(\phi,\theta)+\hat{\gamma}(\theta)\right)^2,\qquad\forall\phi\in 2\pi\mathbb{T}.
  \end{equation}

  Recall the definition of $P_{\alpha, a}(\phi,\theta)$ in \eqref{eqn:aux func a}, we know that $P_{\alpha, a}(\phi,\theta)=P_{\alpha, 0}(\phi,\theta)+a(\theta)\sin^2\alpha$. Thus, \begin{equation}
    \begin{aligned}
      &4\hat{\gamma}(\theta)P_{\alpha, a}(\phi,\theta)-Q_\alpha^2(\phi,\theta)\\
      &\geq 4\hat{\gamma}(\theta)(a(\theta)-A(\theta))\sin^2\phi+4\hat{\gamma}(\theta)P_{\alpha, A}(\phi,\theta)-\left(P_{\alpha, A}(\phi,\theta)+\hat{\gamma}(\theta)\right)^2\\
      &= 4\hat{\gamma}(\theta)(a(\theta)-A(\theta))\sin^2\phi-\left(P_{\alpha, A}(\phi,\theta)-\hat{\gamma}(\theta)\right)^2\\
      &=\left(4\hat{\gamma}(\theta)(a(\theta)-A(\theta))-((\alpha-1)\hat{\gamma}^\prime(\theta)\cos\phi+A(\theta)\sin\phi)^2\right)\sin^2\phi\\
      &\geq \left(4\hat{\gamma}(\theta)a(\theta)-A^2(\theta)-(a-1)^2|\hat{\gamma}^\prime(\theta)|^2-4\hat{\gamma}(\theta)A(\theta)\right)\sin^2\phi.
    \end{aligned}
  \end{equation} The last inequality is a direct consequence of the bound $|a\cos\phi+b\sin\phi|\leq\sqrt{a^2+b^2},\,\,\forall\phi\in 2\pi\mathbb{T}$.

  Therfore, for any $a(\theta)\geq\frac{1}{4\hat{\gamma}(\theta)}\left[A^2(\theta)+4\hat{\gamma}(\theta)A(\theta)+(\alpha-1)^2|\hat{\gamma}^\prime(\theta)|^2\right]$, we have \begin{equation}
    4\hat{\gamma}(\theta)P_{\alpha, a}(\phi,\theta)-Q_\alpha^2(\phi,\theta)\geq 0,\qquad\forall\phi\in 2\pi\mathbb{T}.
  \end{equation} This gives \eqref{eqn:bound of k0} by the definition of $k_0^a(\theta)$ in \eqref{eqn:def of min stab func b}, and the proof is complete.
\end{proof}

\section{Generalizations to other geometric flows}\label{sec:generalization}

In this section, we consider two specific geometric flows, namely the area-conserved anisotropic curvature flow and the anisotropic surface diffusion, and present their corresponding SP-PFEMs. At the end of this section, based on the proposed analytical framework, we will discuss how to design structure-preserving algorithms for general normal velocity laws.

\subsection{Area-conserved anisotropic curvature flow}

Similar to \eqref{eqn:strong form}, for the area-conserved anisotropic curvature flow in \eqref{eqn:geo flows}, we have the following conservative strong formulation: \begin{subequations}\label{eqn:strong form acmcf}
  \begin{align}
    &\boldsymbol{n}\cdot\partial_t\boldsymbol{X}+\mu-\lambda(t)=0,\\
    &\mu\boldsymbol{n}+\partial_s\Bigl(\hat{\boldsymbol{G}}_k^\alpha(\theta)\partial_s\boldsymbol{X}\Bigr)=\boldsymbol{0},
  \end{align}
\end{subequations} where $\lambda(t)\coloneqq\int_{\Gamma(t)}\mu\,\mathrm{d}s/|\Gamma(t)|$ is the Lagrange multiplier that ensures the enclosed area remains constant over time.

Suppose the initial closed curve $\Gamma(0)=\boldsymbol{X}(\cdot,0)\in[H_p^1(\mathbb{I})]^2$ and the initial weighted curvature $\mu(\cdot,0)=\mu_0(\cdot)\in H_p^1(\mathbb{I})$ is given. Then a variational formulation based on \eqref{eqn:strong form acmcf} is stated as follows: For any $t>0$, find the solution $\left(\boldsymbol{X}(\cdot,t),\mu(\cdot,t)\right)\in[H_p^1(\mathbb{I})]^2\times H_p^1(\mathbb{I})$ such that \begin{subequations}\label{eqn:weak form acmcf}
  \begin{align}
    &\Bigl(\boldsymbol{n}\cdot\partial_t\boldsymbol{X},\varphi\Bigr)_{\Gamma(t)}+\Bigl(\mu-\lambda(t),\varphi\Bigr)_{\Gamma(t)}=0,\qquad\forall\varphi\in H_p^1(\mathbb{I}),\\
    &\Bigl(\mu\boldsymbol{n},\boldsymbol{\omega}\Bigr)_{\Gamma(t)}-\Bigl(\hat{\boldsymbol{G}}_k^\alpha(\theta)\partial_s\boldsymbol{X},\partial_s\boldsymbol{\omega}\Bigr)_{\Gamma(t)}=0,\qquad\forall\boldsymbol{\omega}\in[H_p^1(\mathbb{I})]^2.
  \end{align}
\end{subequations}


Then the SP-PFEM for area-conserved anisotropic curvature flow in \eqref{eqn:geo flows} is as follows: Suppose the initial curve $\Gamma^0$ is given by $\boldsymbol{X}^0(\rho_j)=\boldsymbol{X}(\rho_j,0),\,\,j=1,2,\cdots,N$. For any $m\geq 0$, find the solution $\left(\boldsymbol{X}^{m+1}(\cdot),\mu^{m+1}(\cdot)\right)\in[\mathbb{K}_p^h]^2\times \mathbb{K}_p^h$ such that \begin{subequations}\label{eqn:SP-PFEM acmcf}
  \begin{align}
    &\Bigl(\boldsymbol{n}^{m+\frac{1}{2}}\cdot\frac{\boldsymbol{X}^{m+1}-\boldsymbol{X}^m}{\tau},\varphi^h\Bigr)_{\Gamma^m}^h+\Bigl(\mu^{m+1}-\lambda^{m,*},\varphi^h\Bigr)_{\Gamma^m}^h=0,\qquad\forall\varphi^h\in\mathbb{K}_p^h,\label{eqn:SP-PFEM acmcf a}\\
    &\Bigl(\mu^{m+1}\boldsymbol{n}^{m+\frac{1}{2}},\boldsymbol{\omega}^h\Bigr)_{\Gamma^m}^h-\Bigl(\hat{\boldsymbol{G}}_k^\alpha(\theta^m)\partial_{s^m}\boldsymbol{X}^{m+1},\partial_{s^m}\boldsymbol{\omega}^h\Bigr)_{\Gamma^m}^h=0,\qquad\forall\boldsymbol{\omega}^h\in[\mathbb{K}_p^h]^2,\label{eqn:SP-PFEM acmcf b}
  \end{align}
\end{subequations} where $\lambda^{m,*}\coloneqq\left(\mu^{m+1},1\right)^h_{\Gamma^m}/|\Gamma^m|$.

For the SP-PFEM \eqref{eqn:SP-PFEM acmcf}, we have the following structure-preserving property: \begin{theorem}\label{thm:SP-PFEM acmcf}
  Suppose $\hat{\gamma}(\theta)$ satisfies \eqref{eqn:energy cond} or \eqref{eqn:energy cond rlx} when $a=-1$. Then the SP-PFEM \eqref{eqn:SP-PFEM acmcf} is structure-preserving with sufficiently large $k(\theta)$, i.e. \begin{equation}
    A^{m+1}=A^m=\cdots=A^0,\qquad W^{m+1}\leq W^m\leq\cdots \leq W^0,\qquad\forall m\geq 0.
  \end{equation} 
\end{theorem}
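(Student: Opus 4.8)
The plan is to mirror the proof of Theorem~\ref{thm:main result} (the structure-preserving result for the pure anisotropic curvature flow) with the two modifications forced by the Lagrange multiplier. First I would establish area conservation. Using the same identity as in the proof of the area decay rate, namely
\begin{equation*}
  A^{m+1}-A^m=\Bigl(\boldsymbol{n}^{m+\frac{1}{2}}\cdot(\boldsymbol{X}^{m+1}-\boldsymbol{X}^m),1\Bigr)^h_{\Gamma^m},
\end{equation*}
which is purely geometric and independent of the flow, I would take $\varphi^h\equiv 1$ in \eqref{eqn:SP-PFEM acmcf a}. Because $(\mu^{m+1}-\lambda^{m,*},1)^h_{\Gamma^m}=(\mu^{m+1},1)^h_{\Gamma^m}-\lambda^{m,*}\,(1,1)^h_{\Gamma^m}$ and $(1,1)^h_{\Gamma^m}=\sum_j|\boldsymbol{h}_j^m|=|\Gamma^m|$, the definition $\lambda^{m,*}\coloneqq(\mu^{m+1},1)^h_{\Gamma^m}/|\Gamma^m|$ makes this term vanish exactly, giving $A^{m+1}-A^m=0$ and hence $A^{m+1}=A^m=\cdots=A^0$.

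Next I would prove energy dissipation. The key observation is that \eqref{eqn:SP-PFEM acmcf b} is \emph{identical} to \eqref{eqn:SP-PFEM b}, so the local energy estimate argument from Section~\ref{subsec:proof of energy stability} applies verbatim: for $k(\theta)$ sufficiently large (so that $k(\theta)\geq k_\text{min}^\alpha(\theta)$, which exists by Theorems~\ref{thm:existence of k0 c=0} and~\ref{thm:existence of k0 c>0} under the stated hypothesis on $\hat\gamma$), Lemma~\ref{lma:loc energy est} applied termwise with $\boldsymbol{p}=\boldsymbol{h}_j^{m+1}$, $\boldsymbol{q}=\boldsymbol{h}_j^m$ yields
\begin{equation*}
  W^{m+1}-W^m\leq\Bigl(\hat{\boldsymbol{G}}_k^\alpha(\theta^m)\partial_s\boldsymbol{X}^{m+1},\partial_s(\boldsymbol{X}^{m+1}-\boldsymbol{X}^m)\Bigr)^h_{\Gamma^m}.
\end{equation*}
Then I would take $\boldsymbol{\omega}^h=\boldsymbol{X}^{m+1}-\boldsymbol{X}^m$ in \eqref{eqn:SP-PFEM acmcf b} and $\varphi^h=\mu^{m+1}$ in \eqref{eqn:SP-PFEM acmcf a}. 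The right-hand side above becomes $(\mu^{m+1}\boldsymbol{n}^{m+\frac12},\boldsymbol{X}^{m+1}-\boldsymbol{X}^m)^h_{\Gamma^m}$, and from the first equation this equals $-\tau(\mu^{m+1},\mu^{m+1}-\lambda^{m,*})^h_{\Gamma^m}=-\tau(\mu^{m+1},\mu^{m+1})^h_{\Gamma^m}+\tau\lambda^{m,*}(\mu^{m+1},1)^h_{\Gamma^m}$. Using $(\mu^{m+1},1)^h_{\Gamma^m}=\lambda^{m,*}|\Gamma^m|$, the combination is $-\tau\bigl[(\mu^{m+1},\mu^{m+1})^h_{\Gamma^m}-(\lambda^{m,*})^2|\Gamma^m|\bigr]$, which is $\leq 0$ by the Cauchy--Schwarz inequality $(\mu^{m+1},1)^h_{\Gamma^m}{}^2\leq(\mu^{m+1},\mu^{m+1})^h_{\Gamma^m}(1,1)^h_{\Gamma^m}$ for the mass-lumped inner product. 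Chaining over $m$ gives $W^{m+1}\leq W^m\leq\cdots\leq W^0$.

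The only genuinely new ingredient compared with Theorem~\ref{thm:main result} is the handling of the $\lambda^{m,*}$ terms: the exact cancellation in the area identity and the Cauchy--Schwarz bound that shows the extra $+\tau(\lambda^{m,*})^2|\Gamma^m|$ term does not spoil dissipativity. Neither step is an obstacle—both are short and standard. The part that is doing all the real work, namely the existence of the minimal stabilizing function and the local energy estimate, has already been proved in Sections~\ref{sec:proof of energy stability} and is simply invoked. I would also note the trivial point that the $a=-1$ in the hypothesis of Theorem~\ref{thm:SP-PFEM acmcf} should read $\alpha=-1$, matching the notation elsewhere in the paper, so that the case distinction \eqref{eqn:energy cond} versus \eqref{eqn:energy cond rlx} is exactly the one guaranteeing Assumption~\ref{asp:stab func well-defined} via Theorem~\ref{thm:existence of k0 c=0} and Theorem~\ref{thm:existence of k0 c>0}.
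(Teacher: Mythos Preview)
Your proposal is correct and is exactly the standard argument the paper has in mind; indeed the paper omits the proof entirely, writing only ``The proof is similar to that in \cite[Theorem~4.2]{bao2024structure} and is therefore omitted.'' Your two modifications---the exact cancellation $(\mu^{m+1}-\lambda^{m,*},1)^h_{\Gamma^m}=0$ for area conservation, and the Cauchy--Schwarz inequality in the mass-lumped inner product to absorb the extra $\tau(\lambda^{m,*})^2|\Gamma^m|$ term for energy dissipation---are precisely the ingredients of that reference, and the rest is verbatim Section~\ref{sec:proof of energy stability}.
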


The proof is similar to that in \cite[Theorem 4.2]{bao2024structure} and is therefore omitted.

\subsection{Anisotropic surface diffusion}

Similarly, for anisotropic surface diffusion in \eqref{eqn:geo flows}, the conservative strong form is given as \begin{subequations}\label{eqn:strong form sd}
  \begin{align}
    &\boldsymbol{n}\cdot\partial_t\boldsymbol{X}-\partial_{ss}\mu=0,\\
    &\mu\boldsymbol{n}+\partial_s\Bigl(\hat{\boldsymbol{G}}_k^\alpha(\theta)\partial_s\boldsymbol{X}\Bigr)=\boldsymbol{0}. 
  \end{align}
\end{subequations} And the corresponding variational formulation follows by a similar derivation.

Suppose the initial closed curve $\Gamma^0$ is given by $\boldsymbol{X}^0(\rho_j)=\boldsymbol{X}(\rho_j,0),\,\,j=1,2,\cdots,N$. Then the SP-PFEM for anisotropic surface diffusion in \eqref{eqn:geo flows} can be stated as follows: For $m\geq 0$, find the solution $\left(\boldsymbol{X}^{m+1}(\cdot),\mu^{m+1}(\cdot)\right)\in[\mathbb{K}_p^h]^2\times \mathbb{K}_p^h$ satisfying \begin{subequations}\label{eqn:SP-PFEM sd}
  \begin{align}
    &\Bigl(\boldsymbol{n}^{m+\frac{1}{2}}\cdot\frac{\boldsymbol{X}^{m+1}-\boldsymbol{X}^m}{\tau},\varphi^h\Bigr)_{\Gamma^m}^h+\Bigl(\partial_{s^m}\mu^{m+1},\partial_{s^m}\varphi^h\Bigr)_{\Gamma^m}^h=0,\qquad\forall\varphi^h\in\mathbb{K}_p^h,\label{eqn:SP-PFEM sd a}\\
    &\Bigl(\mu^{m+1}\boldsymbol{n}^{m+\frac{1}{2}},\boldsymbol{\omega}^h\Bigr)_{\Gamma^m}^h-\Bigl(\hat{\boldsymbol{G}}_k^\alpha(\theta^m)\partial_{s^m}\boldsymbol{X}^{m+1},\partial_{s^m}\boldsymbol{\omega}^h\Bigr)_{\Gamma^m}^h=0,\qquad\forall\boldsymbol{\omega}^h\in[\mathbb{K}_p^h]^2.\label{eqn:SP-PFEM sd b}
  \end{align}
\end{subequations}

For the SP-PFEM \eqref{eqn:SP-PFEM sd}, the following structure-preserving property holds: \begin{theorem}\label{thm:SP-PFEM sd}
  Suppose $\hat{\gamma}(\theta)$ satisfies \eqref{eqn:energy cond} or \eqref{eqn:energy cond rlx} when $a=-1$. Then the SP-PFEM \eqref{eqn:SP-PFEM sd} is structure-preserving with sufficiently large $k(\theta)$, i.e. \begin{equation}
    A^{m+1}=A^m=\cdots= A^0,\qquad W^{m+1}\leq W^m\leq\cdots \leq W^0,\qquad\forall m\geq 0.
  \end{equation}
\end{theorem}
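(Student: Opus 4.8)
The plan is to follow the two-part argument used for Theorem~\ref{thm:main result} and Theorem~\ref{thm:SP-PFEM acmcf}, adapting only the choice of test functions to the surface-diffusion form of \eqref{eqn:SP-PFEM sd}. For the area conservation I would first observe that the exact area-difference identity \eqref{eqn:area decay rate formula}, namely $A^{m+1}-A^m=\bigl(\boldsymbol{n}^{m+\frac{1}{2}}\cdot(\boldsymbol{X}^{m+1}-\boldsymbol{X}^m),1\bigr)^h_{\Gamma^m}$, depends only on $\boldsymbol{n}^{m+\frac{1}{2}}$ and therefore holds verbatim for \eqref{eqn:SP-PFEM sd}. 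Taking $\varphi^h\equiv 1$ in \eqref{eqn:SP-PFEM sd a} annihilates the term $\bigl(\partial_s\mu^{m+1},\partial_s\varphi^h\bigr)^h_{\Gamma^m}$ because $\partial_s 1\equiv 0$, so that $\bigl(\boldsymbol{n}^{m+\frac{1}{2}}\cdot(\boldsymbol{X}^{m+1}-\boldsymbol{X}^m),1\bigr)^h_{\Gamma^m}=0$ and hence $A^{m+1}=A^m$; by induction $A^{m+1}=A^m=\cdots=A^0$. Unlike in \eqref{eqn:SP-PFEM}, the area is conserved exactly rather than with a prescribed decay rate, since no velocity term of the form $(\mu^{m+1},\varphi^h)^h_{\Gamma^m}$ appears.

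For the energy dissipation I would reuse the local energy estimate of Lemma~\ref{lma:loc energy est}. Since $\hat{\gamma}\in C^2(2\pi\mathbb{T})$ satisfies \eqref{eqn:energy cond} (equivalently $c>0$) or \eqref{eqn:energy cond rlx} with $\alpha=-1$ (the critical case $c=0$), Theorems~\ref{thm:existence of k0 c=0} and~\ref{thm:existence of k0 c>0} together guarantee $k_0^\alpha(\theta)<+\infty$, hence $k^\alpha_{\min}(\theta)<+\infty$, so Assumption~\ref{asp:stab func well-defined} is in force and \eqref{eqn:loc energy est} is available for $k(\theta)\ge k^\alpha_{\min}(\theta)$. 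Applying \eqref{eqn:loc energy est} edge by edge with $\boldsymbol{p}=\boldsymbol{h}_j^{m+1}$ and $\boldsymbol{q}=\boldsymbol{h}_j^m$ (both nonzero by the non-degeneracy of $\Gamma^m$ and $\Gamma^{m+1}$) and summing over $j$, exactly as in the chain \eqref{eqn:energy difference}, yields $W^{m+1}-W^m\le\bigl(\hat{\boldsymbol{G}}_k^\alpha(\theta^m)\partial_s\boldsymbol{X}^{m+1},\partial_s(\boldsymbol{X}^{m+1}-\boldsymbol{X}^m)\bigr)^h_{\Gamma^m}$. I would then close the estimate using the scheme itself: choosing $\boldsymbol{\omega}^h=\boldsymbol{X}^{m+1}-\boldsymbol{X}^m$ in \eqref{eqn:SP-PFEM sd b} rewrites the right-hand side as $\bigl(\mu^{m+1}\boldsymbol{n}^{m+\frac{1}{2}},\boldsymbol{X}^{m+1}-\boldsymbol{X}^m\bigr)^h_{\Gamma^m}$, while choosing $\varphi^h=\tau\mu^{m+1}$ in \eqref{eqn:SP-PFEM sd a} shows that this quantity equals $-\tau\bigl(\partial_s\mu^{m+1},\partial_s\mu^{m+1}\bigr)^h_{\Gamma^m}\le 0$. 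Therefore $W^{m+1}-W^m\le -\tau\bigl(\partial_s\mu^{m+1},\partial_s\mu^{m+1}\bigr)^h_{\Gamma^m}\le 0$, and iterating gives $W^{m+1}\le W^m\le\cdots\le W^0$.

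The only genuinely substantive ingredient is the applicability of Lemma~\ref{lma:loc energy est}, which amounts to the boundedness of the minimal stabilizing function under the stated condition on $\hat{\gamma}$; this has already been settled in Section~\ref{sec:existence of min stab func} and is exactly what underlies Theorem~\ref{thm:main result}, so no new difficulty arises here. Everything else is a routine substitution of test functions, and the proof can legitimately be presented as a short variant of the proofs of Theorem~\ref{thm:main result} and Theorem~\ref{thm:SP-PFEM acmcf}; the only point that requires a little care is tracking the sign of the velocity contribution $\bigl(\partial_s\mu^{m+1},\partial_s\varphi^h\bigr)^h_{\Gamma^m}$, which is precisely what makes the choice $\varphi^h=\tau\mu^{m+1}$ produce the nonpositive dissipation term $-\tau\bigl(\partial_s\mu^{m+1},\partial_s\mu^{m+1}\bigr)^h_{\Gamma^m}$.
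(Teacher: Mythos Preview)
Your proposal is correct and follows exactly the approach the paper takes elsewhere. The paper omits the proof of this theorem, referring to \cite{li2025structure}, but your argument is precisely the specialization of the paper's own proof of Theorem~\ref{thm:SP-PFEM general} to the surface-diffusion operator $\mathfrak{F}(f,\Gamma)=\partial_{ss}f$, and uses the same test-function choices ($\varphi^h=1$ for area, $\boldsymbol{\omega}^h=\boldsymbol{X}^{m+1}-\boldsymbol{X}^m$ and $\varphi^h=\mu^{m+1}$ for energy) as in the proof of Theorem~\ref{thm:main result}.
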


For the proof we refer the reader to \cite{li2025structure}. Details are omitted here for brevity.

\subsection{General normal velocity laws}



The introduction of surface energy matrices enables us to address anisotropic problems analogously to the way isotropic cases are treated in the BGN-type method. 

Here we consider flows of the form \begin{equation}\label{eqn:general normal velocity law}
  V_n=\mathfrak{F}(\mu),\qquad\text{on}\,\,\Gamma(t),
\end{equation} where $\mathfrak{F}(\cdot)$ is a mapping that maps functions on closed evolving curve $\Gamma(t)$ to functions on $\Gamma(t)$. 

If $\mathfrak{F}$ satisfies \begin{equation}\label{eqn:F property}
  \Bigl(\mathfrak{F}(\mu),\mu\Bigr)_{\Gamma(t)}\leq 0,
\end{equation} then the evolution equation \eqref{eqn:general normal velocity law} exhibits the property of area decay rate and energy dissipation as \begin{equation}
  \frac{\mathrm{d}}{\mathrm{dt}}A(t)=\Bigl(\mathfrak{F}(\mu),1\Bigr)_{\Gamma(t)},\qquad\frac{\mathrm{d}}{\mathrm{d}t}W(t)=\Bigl(V_n,\mu\Bigr)_{\Gamma(t)}=\Bigl(\mathfrak{F}(\mu),\mu\Bigr)_{\Gamma(t)}\leq 0.
\end{equation} 
All the evolution laws discussed above can be incorporated into this equation. For instance, for the anisotropic surface diffusion $V_n=\partial_{ss}\mu$, by taking integration by parts, we have \begin{equation}
  \Bigl(\mathfrak{F}(\mu),\mu\Bigr)_{\Gamma(t)}=\Bigl(\partial_{ss}\mu,\mu\Bigr)_{\Gamma(t)}=-\Bigl(\partial_s\mu,\partial_s\mu\Bigr)_{\Gamma(t)}\leq 0.
\end{equation}

As we have seen earlier, to obtain a structure-preserving numerical scheme for the evolution equation \eqref{eqn:general normal velocity law}, we should formally consider the following strong form: \begin{subequations}\label{eqn:strong form general}
  \begin{align}
    &\boldsymbol{n}\cdot\partial_t\boldsymbol{X}-\mathfrak{F}(\mu)=0,\\
    &\mu\boldsymbol{n}+\partial_s\Bigl(\hat{\boldsymbol{G}}_k^\alpha(\theta)\partial_s\boldsymbol{X}\Bigr)=\boldsymbol{0}.
  \end{align}
\end{subequations} 

While the notation $\mathfrak{F}(\mu)$ is conventional, constructing structure-preserving fully discrete schemes requires careful treatment of the underlying curve $\Gamma(t)$. To make this dependence explicit, we adopt the notation $\mathfrak{F}(\mu,\Gamma) = \mathfrak{F}(\mu)$. This perspective naturally suggests a semi-implicit discretization: treating $\mu$ implicitly at time level $m+1$ while treating $\Gamma$ explicitly at level $m$. As shown below, this strategy yields structure-preserving fully discrete schemes in a unified manner:

Suppose the initial data $\Gamma^0$ is given. For $m\geq 0$, find the solution $\left(\boldsymbol{X}^{m+1}(\cdot),\mu^{m+1}(\cdot)\right)\in[\mathbb{K}_p^h]^2\times \mathbb{K}_p^h$ such that \begin{subequations}\label{eqn:SP-PFEM general}
  \begin{align}
    &\Bigl(\boldsymbol{n}^{m+\frac{1}{2}}\cdot\frac{\boldsymbol{X}^{m+1}-\boldsymbol{X}^m}{\tau},\varphi^h\Bigr)_{\Gamma^m}^h-\Bigl(\mathfrak{F}(\mu^{m+1},\Gamma^m),\varphi^h\Bigr)_{\Gamma^m}^h=0,\qquad\forall\varphi^h\in\mathbb{K}_p^h,\\
    &\Bigl(\mu^{m+1}\boldsymbol{n}^{m+\frac{1}{2}},\boldsymbol{\omega}^h\Bigr)_{\Gamma^m}^h-\Bigl(\hat{\boldsymbol{G}}_k^\alpha(\theta^m)\partial_{s^m}\boldsymbol{X}^{m+1},\partial_{s^m}\boldsymbol{\omega}^h\Bigr)_{\Gamma^m}^h=0,\qquad\forall \boldsymbol{\omega}^h\in[\mathbb{K}_p^h]^2.
  \end{align}
\end{subequations}

For \eqref{eqn:SP-PFEM general}, the following structure-preserving property holds: \begin{theorem}\label{thm:SP-PFEM general}
  Suppose $\hat{\gamma}(\theta)$ satifies \eqref{eqn:energy cond} or \eqref{eqn:energy cond rlx} when $a=-1$. Then the SP-PFEM \eqref{eqn:SP-PFEM general} is structure-preserving with sufficiently large $k(\theta)$, i.e. \begin{equation}
    \frac{A^{m+1}-A^m}{\tau}=-\Bigl(\mathfrak{F}(\mu^{m+1},\Gamma^m),1\Bigr)^h_{\Gamma^m},\qquad W^{m+1}\leq W^m\leq\cdots \leq W^0,\qquad\forall m\geq 0.
  \end{equation}
\end{theorem}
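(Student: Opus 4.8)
The plan is to follow the two-part argument used for Theorem~\ref{thm:main result}, with the weighted-curvature contribution $\mu^{m+1}$ replaced throughout by the general term $\mathfrak{F}(\mu^{m+1},\Gamma^m)$, and to supply the single additional ingredient that this substitution requires, namely the discrete counterpart of the dissipativity assumption \eqref{eqn:F property}.

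For the area identity I would start from the Bao--Zhao area-difference formula \eqref{eqn:area decay rate formula}, which is purely geometric---it depends only on the definition of $\boldsymbol{n}^{m+\frac{1}{2}}$---and therefore carries over verbatim. Taking $\varphi^h\equiv 1$ in the first equation of \eqref{eqn:SP-PFEM general} gives $\bigl(\boldsymbol{n}^{m+\frac{1}{2}}\cdot(\boldsymbol{X}^{m+1}-\boldsymbol{X}^m),1\bigr)^h_{\Gamma^m}=\tau\bigl(\mathfrak{F}(\mu^{m+1},\Gamma^m),1\bigr)^h_{\Gamma^m}$, and combining the two relations produces the claimed area decay rate. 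In the area-conserved case one checks in addition that the discrete Lagrange multiplier $\lambda^{m,*}$ makes the right-hand side vanish, so that $A^{m+1}=A^m$.

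For the energy inequality the backbone is Lemma~\ref{lma:loc energy est}. Under condition \eqref{eqn:energy cond rlx} with $\alpha=-1$, or \eqref{eqn:energy cond} with $\alpha\neq-1$, Theorems~\ref{thm:existence of k0 c=0} and \ref{thm:existence of k0 c>0} guarantee that $k_\text{min}^\alpha(\theta)$ is finite, so Assumption~\ref{asp:stab func well-defined} is satisfied and, for $k(\theta)$ chosen large enough, the local estimate \eqref{eqn:loc energy est} applies. Applying it on each element with $\boldsymbol{p}=\boldsymbol{h}_j^{m+1}$, $\boldsymbol{q}=\boldsymbol{h}_j^m$ and summing exactly as in \eqref{eqn:energy difference} yields $\bigl(\hat{\boldsymbol{G}}_k^\alpha(\theta^m)\partial_s\boldsymbol{X}^{m+1},\partial_s(\boldsymbol{X}^{m+1}-\boldsymbol{X}^m)\bigr)^h_{\Gamma^m}\ge W^{m+1}-W^m$. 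I would then test the second equation of \eqref{eqn:SP-PFEM general} with $\boldsymbol{\omega}^h=\boldsymbol{X}^{m+1}-\boldsymbol{X}^m$ and the first with $\varphi^h=\mu^{m+1}$, which chains into $W^{m+1}-W^m\le\bigl(\mu^{m+1}\boldsymbol{n}^{m+\frac{1}{2}},\boldsymbol{X}^{m+1}-\boldsymbol{X}^m\bigr)^h_{\Gamma^m}=\tau\bigl(\mathfrak{F}(\mu^{m+1},\Gamma^m),\mu^{m+1}\bigr)^h_{\Gamma^m}$; the desired bound $W^{m+1}\le W^m$ then follows once $\bigl(\mathfrak{F}(\mu^{m+1},\Gamma^m),\mu^{m+1}\bigr)^h_{\Gamma^m}\le 0$, and iterating over $m$ gives the full chain $W^{m+1}\le\cdots\le W^0$.

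The real content sits in that last inequality, the discrete analogue of \eqref{eqn:F property}, and this is where the main obstacle lies. The semi-implicit construction---$\mu$ taken at level $m+1$ while $\Gamma$ is frozen at level $m$---is precisely what makes $\bigl(\mathfrak{F}(\mu^{m+1},\Gamma^m),\mu^{m+1}\bigr)^h_{\Gamma^m}$ a genuine quadratic form in the unknown $\mu^{m+1}$ on the fixed polygon $\Gamma^m$, so that its sign can be controlled flow by flow: a discrete Cauchy--Schwarz inequality handles the area-conserving term $-\mu+\lambda$, and discrete integration by parts turns the surface-diffusion term $\partial_{ss}\mu$ into $-\bigl(\partial_s\mu^{m+1},\partial_s\mu^{m+1}\bigr)^h_{\Gamma^m}\le 0$. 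For a fully general $\mathfrak{F}$ this non-positivity must be imposed as the standing structural hypothesis on the discretization; once it is granted, the remainder of the argument is a direct transcription of the proof of Theorem~\ref{thm:main result}.
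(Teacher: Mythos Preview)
Your proposal is correct and follows essentially the same approach as the paper: the area identity via \eqref{eqn:area decay rate formula} and $\varphi^h\equiv 1$, and the energy inequality via the local estimate \eqref{eqn:loc energy est}/\eqref{eqn:energy difference} followed by the test-function choices $\boldsymbol{\omega}^h=\boldsymbol{X}^{m+1}-\boldsymbol{X}^m$ and $\varphi^h=\mu^{m+1}$, concluding with the discrete dissipativity $\bigl(\mathfrak{F}(\mu^{m+1},\Gamma^m),\mu^{m+1}\bigr)^h_{\Gamma^m}\le 0$. The paper's proof simply invokes ``assumption~\eqref{eqn:F property}'' for this last step without the flow-by-flow discussion you give, so your treatment is in fact slightly more careful on that point.
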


\begin{proof}
  For the area decay rate. Similar to \eqref{eqn:area decay rate formula}, we have \begin{equation}
    A^{m+1}-A^m=\Bigl(\boldsymbol{n}^{m+\frac{1}{2}}\cdot(\boldsymbol{X}^{m+1}-\boldsymbol{X}^m),1\Bigr)_{\Gamma^m}^h=-\tau\Bigl(\mathfrak{F}(\mu^{m+1},\Gamma^m),1\Bigr)_{\Gamma^m}^h.
  \end{equation}

  For the energy dissipation, \eqref{eqn:energy difference} still holds. Thus, we have \begin{equation}
    \begin{aligned}
      W^{m+1}-W^m&\leq \Bigl(\hat{\boldsymbol{G}}_k^\alpha(\theta^m)\partial_{s^m}\boldsymbol{X}^{m+1},\partial_{s^m}(\boldsymbol{X}^{m+1}-\boldsymbol{X}^m)\Bigr)^h_{\Gamma^m}\\
      &=\Bigl(\mu^{m+1}\boldsymbol{n}^{m+\frac{1}{2}},\boldsymbol{X}^{m+1}-\boldsymbol{X}^m\Bigr)^h_{\Gamma^m}=-\tau\Bigl(\mathfrak{F}(\mu^{m+1},\Gamma^m),\mu^{m+1}\Bigr)^h_{\Gamma^m}\leq 0.
    \end{aligned}
  \end{equation} The last inequality follows from the assumption \eqref{eqn:F property}.
\end{proof}

Here, we present three examples to illustrate our ideas. For additional examples and numerical discussions, we refer the reader to \cite[section 5]{barrett2020parametricbook} by Barrett et al. as well as the references therein.

\begin{example}
  By choosing $\mathfrak{F}(f,\Gamma)=-f+\int_{\Gamma}f\,\mathrm{d}s/|\Gamma|$, where $f$ is a function on $\Gamma$, we obtain $\mathfrak{F}(\mu^{m+1},\Gamma^{m})=-\mu^{m+1}+\int_{\Gamma^m}\mu^{m+1}\,\mathrm{d}s/|\Gamma^m|$. This leads to the SP-PFEM for the area-conserved anisotropic curvature flow, as shown in \eqref{eqn:SP-PFEM acmcf}.
\end{example}

\begin{example}
  For the surface diffusion, we take $\mathfrak{F}(f,\Gamma)=-\partial_{ss}f$, where $\partial_s$ denotes differentiation with respect to arc-length along $\Gamma$. Applying integration by parts yields $\left(\mathfrak{F}(\mu,\Gamma),\mu\right)_\Gamma=-(\partial_s\mu,\partial_s\mu)_\Gamma\leq 0$, which leads to recovery of the SP-PFEM \eqref{eqn:SP-PFEM sd} and Theorem~\ref{thm:SP-PFEM sd} for anisotropic surface diffusion.
\end{example}

\begin{example}
  An intermediate flow between the area-conserved anisotropic curvature flow and surface diffusion is \begin{equation}\label{eqn:intermediate flow}
    V_n=\partial_{ss}\eta,\qquad\text{where}\,\,\left(-\frac{1}{\xi}\partial_{ss}+\frac{1}{\nu}\right)\eta=\mu,
  \end{equation} and $\xi,\nu\in\mathbb{R}^+$. We define the mapping $\mathfrak{F}(\cdot,\Gamma^m)$ via \begin{equation}
    \Bigl(\mathfrak{F}(f,\Gamma^m),\varphi^h\Bigr)^h_{\Gamma^m}=\Bigl(\partial_{s^m}\eta(f),\partial_{s^m}\varphi^h\Bigr)^h_{\Gamma^m},\qquad\forall\varphi^h\in \mathbb{K}_p^h,
  \end{equation} where $\eta(f)$ represents the solution of \begin{equation}
  \frac{1}{\xi}\Bigl(\partial_{s^m}\eta(f),\partial_{s^m}\psi^h\Bigr)^h_{\Gamma^m}+\frac{1}{\nu}\Bigl(\eta(f),\psi^h\Bigr)^h_{\Gamma^m}=\Bigl(f,\psi^h\Bigr)^h_{\Gamma^m},\qquad\forall\psi^h\in \mathbb{K}_p^h.
  \end{equation} Taking $f=\varphi^h=\mu^{m+1},\psi^h=\mu^{m+1}-\frac{1}{\nu}\eta(\mu^{m+1})$, we obtain \begin{equation}
    \Bigl(\mathfrak{F}(\mu^{m+1},\Gamma^m),\mu^{m+1}\Bigr)^h_{\Gamma^m}=-\frac{1}{\nu}\Bigl(\eta,\eta\Bigr)^h_{\Gamma^m}-\xi\Bigl(\mu^{m+1}-\frac{1}{\nu}\eta,\mu^{m+1}-\frac{1}{\nu}\eta\Bigr)^h_{\Gamma^m}\leq 0.
  \end{equation} As a result, we will derive a structure-preserving discretization of the intermediate flow \eqref{eqn:intermediate flow} under the condition \eqref{eqn:energy cond} or \eqref{eqn:energy cond rlx}. For further discussion of the intermediate flow, we refer the reader to \cite{taylor1994linking,elliott1997diffusional,escher2001limiting} for theoretical aspects and to \cite{bao2022volume,barrett2007variational} for numerical methods.
\end{example}

\section{Numerical results}\label{sec:numer}

In this section, we will report extensive numerical experiments to demonstrate the high performance of the proposed SP-PFEMs. 

\subsection{Error and convergence rate}
To measure the distance between two closed curves $\Gamma_1$ and $\Gamma_2$, we introduce the manifold distance $M(\Gamma_1,\Gamma_2)$ as follows \cite{zhao2021energy,li2021energy}: \begin{equation}
  M(\Gamma_1,\Gamma_2)\coloneqq|(\Omega_1\backslash\Omega_2)\cup(\Omega_2\backslash\Omega_1)|=2|\Omega_1\cup\Omega_2|-|\Omega_1|-|\Omega_2|,
\end{equation} where $\Omega_i\,\,(i=1,2)$ is the interior region enclosed by $\Gamma_i$ and $|\Omega|$ denotes the area of $\Omega$. 

We define the intermediate curve $\Gamma_{h,\tau}(t)$ between $\Gamma^m,\Gamma^{m+1}$ as \begin{equation}
  \Gamma_{h,\tau}(t)\coloneqq\frac{t_{m+1}-t}{\tau}\,\Gamma^m+\frac{t-t_m}{\tau}\,\Gamma^{m+1},\qquad t\in[t_m,t_{m+1}],\,\,m\geq 0.
\end{equation} The numerical error $e^h(t)$ is formally given as \begin{equation}
  e^h(t)\coloneqq M(\Gamma_{h,\tau}(t),\Gamma(t)).
\end{equation} We approximate the exact solution $\Gamma(t)$ by $\Gamma_{h_e,\tau_e}(t)$ with fine meshes $h_e=2^{-8},\tau_e=h_e^2$. 

In the convergence tests, we mainly consider the following two types of surface energies: \begin{itemize}
  \item Case I: $\hat{\gamma}(\theta)=1+\beta\cos 3\theta$ with $|\beta|<1$.
  \item Case II: $\hat{\gamma}(\theta)=\sqrt{\left(\frac{5}{2}+\frac{3}{2}\text{sgn}(n_1)\right)n_1^2+n_2^2}$, where $\boldsymbol{n}=(n_1,n_2)^T=(-\sin\theta,\cos\theta)^T$.
\end{itemize} 

The minimal stabilizing function $k_\text{min}^a(\theta)$ is obtained as follows: for a given $\hat{\gamma}(\theta)$ and $a\in\mathbb{R}$, we solve the optimization problem \eqref{eqn:def of min stab func b} at $\theta_j=-\pi+\frac{j\pi}{10},\,\,\forall 0\leq j\leq 20$ to compute $k_0^a(\theta_j)$, then use linear interpolation to approximate $k_0^a(\theta)$ at the intermediate points and obtain $k_\text{min}^a(\theta)$ by \eqref{eqn:def of min stab func}. In the following convergence tests, the stabilizing functions are always chosen to be $k(\theta)=k_\text{min}^a(\theta)$, unless otherwise stated.

\begin{table}[htbp]
  \captionsetup{skip=12pt}
  \renewcommand{\arraystretch}{1.5}
  \centering
  \begin{tabular}{|c|c|cc|cc|cc|}
  \hline
   $\alpha$ & $(h,\tau)$ & $e^h(t=0.1)$ & order & $e^h(t=0.2)$ & order & $e^h(t=0.3)$ & order \\
   \hline
   \multirow{3}{*}{-1} & $(h_0,\tau_0)$ & 1.30e-1 & - & 1.63e-1 & - & 1.84e-1 & - \\
                       & $(\frac{h_0}{2},\frac{\tau_0}{4})$ & 3.12e-2 & 2.06 & 4.03e-2 & 2.01 & 4.64e-2 & 1.98 \\
                       & $(\frac{h_0}{2^2},\frac{\tau_0}{4^2})$ & 8.02e-3 & 1.96 & 1.05e-2 & 1.94 & 1.21e-2 & 1.94 \\
  \hline
  \multirow{3}{*}{0} & $(h_0,\tau_0)$ & 1.30e-1 & - & 1.56e-2 & - & 1.73e-1 & - \\
                     & $(\frac{h_0}{2},\frac{\tau_0}{4})$ & 3.14e-2 & 2.05 & 4.04e-2  & 1.95 & 4.65e-2 & 1.90 \\
                     & $(\frac{h_0}{2^2},\frac{\tau_0}{4^2})$ & 8.00e-3 & 1.97 & 1.04e-2 & 1.95 & 1.20e-2 &  1.94\\
\hline
\multirow{3}{*}{1} & $(h_0,\tau_0)$ & 1.31e-1 & - & 1.58e-1 & - & 1.76e-1 & - \\
                    & $(\frac{h_0}{2},\frac{\tau_0}{4})$ & 3.15e-2 & 2.05 & 4.05e-2 & 1.97 & 4.66e-2 & 1.92 \\
                    & $(\frac{h_0}{2^2},\frac{\tau_0}{4^2})$ & 7.98e-3 & 1.98 & 1.04e-2 & 1.97 & 1.20e-2 & 1.96 \\
  \hline
  \end{tabular}
  \caption{Error $e^h(t)$ and the convergence rates of SP-PFEM \eqref{eqn:SP-PFEM} with $\alpha=0,\pm 1$. The anisotropy is chosen from the energy density in Case I, with $\beta$ set to $1/9$. Other parameters are chosen as $h_0=2^{-4},\tau_0=h_0^2$.}
  \label{tab:conver rate}
\end{table}

\begin{figure}[htbp]
  \centering
  \includegraphics[width=0.333\textwidth]{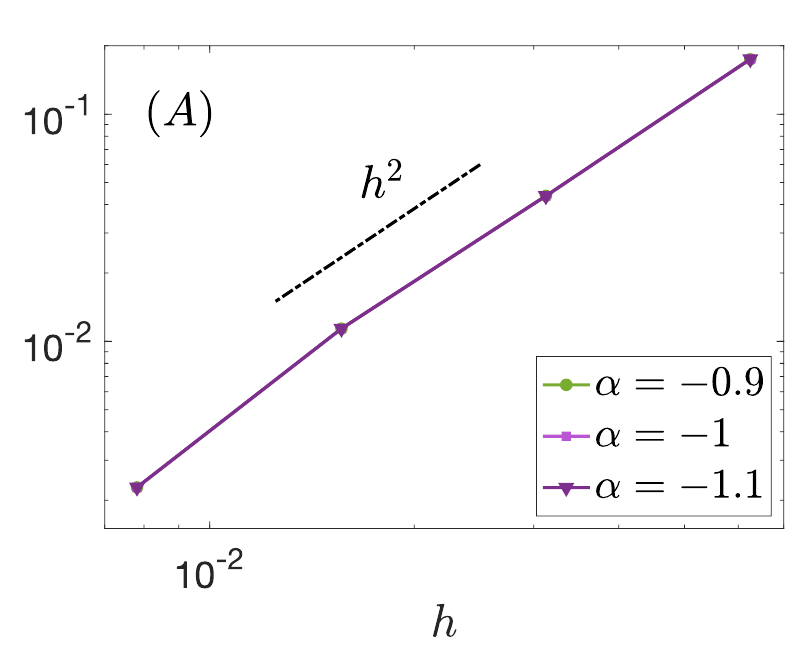}\includegraphics[width=0.333\textwidth]{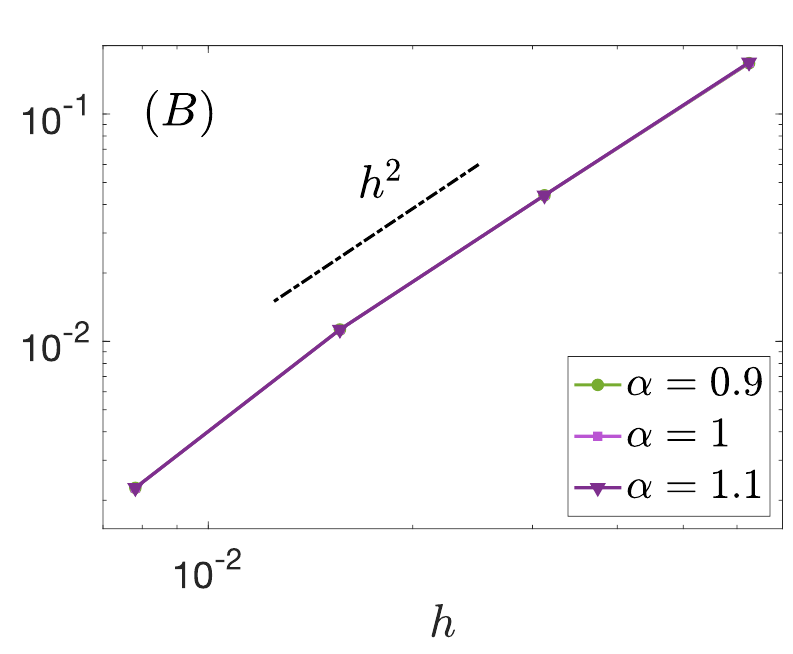}\includegraphics[width=0.333\textwidth]{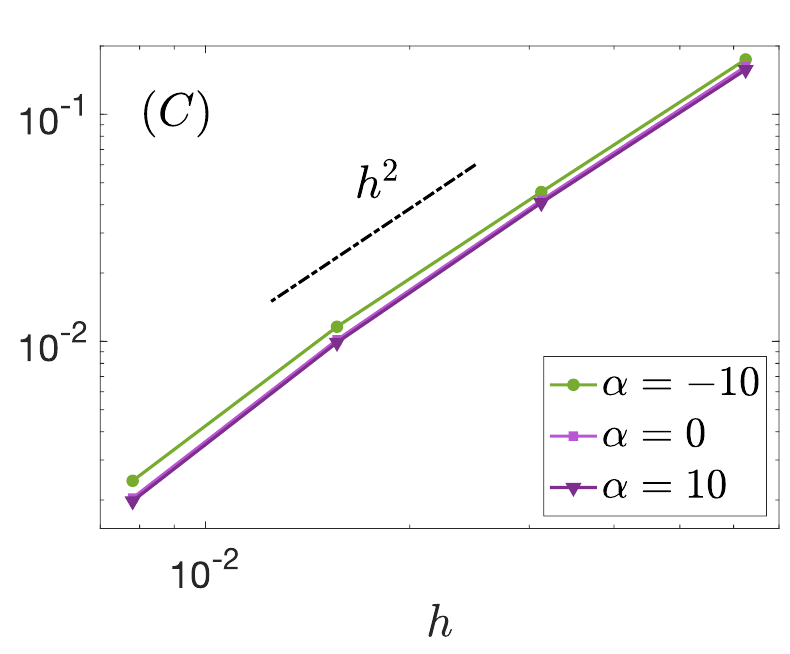}
  \caption{Convergence rates of SP-PFEM \eqref{eqn:SP-PFEM} with different $\alpha$ at $t=0.25$: (A)--(B) Case I with $\beta=1/9$; and (C) Case II.}
  \label{fig: conver rate 2}
\end{figure}

Given that the numerical scheme formally exhibits second-order spatial accuracy and first-order temporal accuracy, the time step $\tau$ is consistently chosen as $\tau=h^2$, unless otherwise specified. In this subsection, the initial curve is chosen as an ellipse with a major axis of 4 and a minor axis of 1. The tolerance value for the Newton's iteration is set to be $\text{tol}=10^{-11}$. 

Numerical errors are reported in Table~\ref{tab:conver rate} and Figure~\ref{fig: conver rate 2}. It can be observed that the SP-PFEM \eqref{eqn:SP-PFEM} exhibits second-order spatial accuracy and first-order temporal accuracy. 

\begin{figure}[htbp]
  \centering
  \includegraphics[width=1\textwidth]{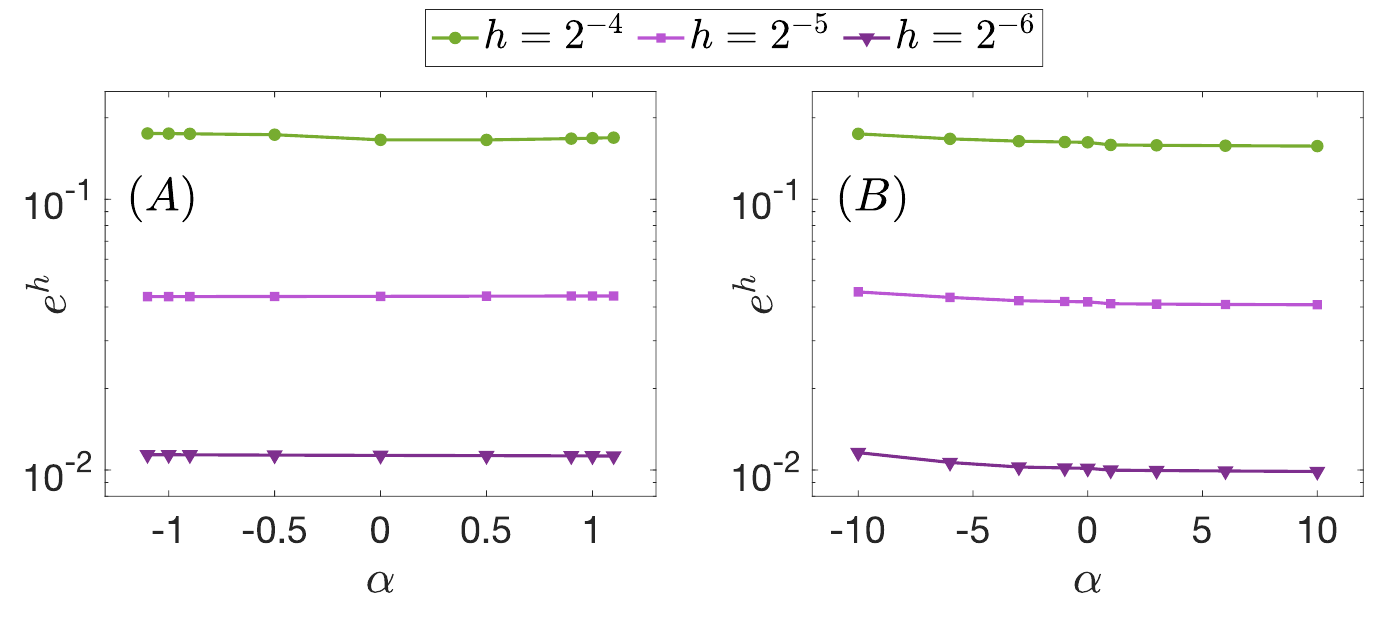}
  \caption{Error comparison of SP-PFEM \eqref{eqn:SP-PFEM} at $t=0.25$: (A) Case I with $\alpha=0,\pm 0.5, \pm 0.9, \pm 1, \pm 1.1$; and (B) Case II with $\alpha=0, \pm 1, \pm 3, \pm 6, \pm 10$.}
  \label{fig:errorcompare}
\end{figure}

We compared the errors corresponding to multiple different values of $\alpha$ at the same time $t=0.25$ for Case I and Case II, as shown in Figure \ref{fig:errorcompare}. Specifically, we tested the baseline values $\alpha=0, \pm 1$, values near the baseline $\alpha=\pm 0.9,\pm 1.1$,  fractional values $\alpha=\pm 0.5$, as well as larger values $\alpha=\pm 3,\pm 6,\pm 10$. The results indicate that the accuracy of the SP-PFEM \eqref{eqn:SP-PFEM} is robust with respect to the choice of $\alpha$. In particular, it was observed that the relative differences among these errors remained below $5\%$ in the majority of cases. 

\subsection{Interation count and CPU time}

\begin{table}
  \captionsetup{skip=12pt}
  \renewcommand{\arraystretch}{1.5}
  \centering
  \begin{tabular}{|c|c|c|c|c|}
  \hline
  & \multicolumn{2}{c|}{$(h,\tau)=(2^{-7},4^{-7})$} & \multicolumn{2}{c|}{$(h,\tau)=(2^{-8},4^{-8})$} \\
  \hline
   $\alpha$  & Iteration count & CPU time (s) & Iteration count & CPU time (s) \\
   \hline
   -10 & 3278 & 17.9999 & 15856 & 162.2915 \\
   -5 &  3278 & 18.0083 & 13265 & 148.6691 \\
   -1  & 3278 & 18.5907 & 13294 & 151.7395 \\
   0  & 3278 & 18.6436 & 13304 & 152.5232 \\
   1 & 3278 & 18.3653 & 13313 & 151.2127 \\
   5 & 3278 & 18.6596 & 13496 & 151.3283 \\
   10 & 3278 & 17.1996 & 18070 & 166.7016\\
   \hline
  \end{tabular}
  \caption{The total number of iterations and average CPU time of SP-PFEM \eqref{eqn:SP-PFEM} for Case I with $\beta=1/7$ at $t=0.1$.}
  \label{tab:iteration count and CPU time}
\end{table}

Table~\ref{tab:iteration count and CPU time} presents the total number of iterations and the average CPU time for the SP-PFEM \eqref{eqn:SP-PFEM} with different values of $\alpha$ at the fixed time $t=0.1$. The reported CPU time is computed as the mean over five independent runs. All computations were performed on a personal laptop equipped with an Apple M1 Pro chip and 16GB of RAM, running macOS Sequoia 15.3.2. The data presented in Table~\ref{tab:iteration count and CPU time} show that the average number of iterations per update step is about $2$, indicating that our scheme can be solved by Newton's method with high efficiency. The average CPU time is approximately 18 seconds for mesh $(h,\tau)=(2^{-7},4^{-7})$ and around 150 seconds for the fine mesh $(h_e,\tau_e)=(2^{-8},4^{-8})$. The results indicate that the SP-PFEM \eqref{eqn:SP-PFEM} exhibits a consistent computational efficiency across different values of $\alpha$.

\subsection{Minimal stabilizing function and mesh quality}

\subsubsection{Minimal stabilizing function}
\begin{figure}[htbp]
  \centering
  \includegraphics[width=1\textwidth]{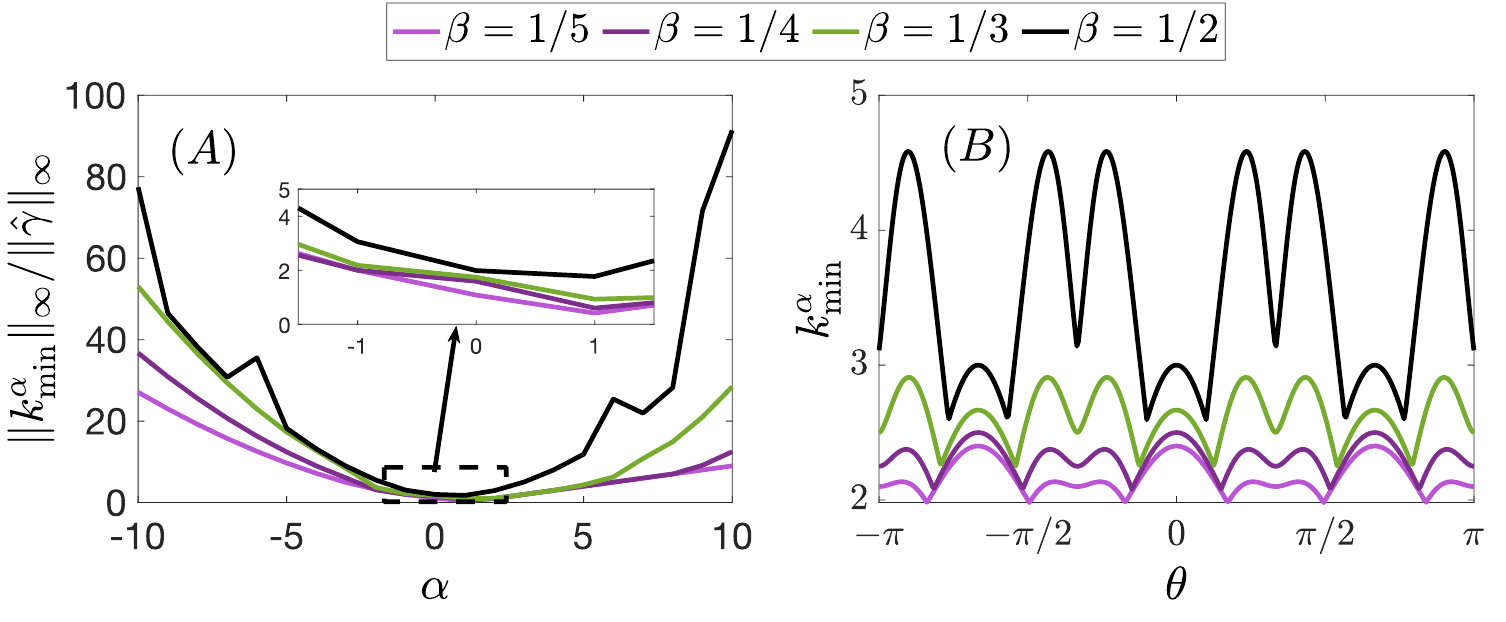}
  \caption{Minimal stabilizing functions and the ratios of their $L^\infty$-norms to the $L^\infty$-norm of the anisotropy in Case I.}
  \label{fig:stabfunccompare}
\end{figure}

Figure~\ref{fig:stabfunccompare} displays the minimal stabilizing functions for the 3-fold anisotropy $\hat{\gamma}(\theta)=1+\beta\cos 3\theta$ and the variation of the ratios of their $L^\infty$-norm to the $L^\infty$-norm of $\hat{\gamma}(\theta)$ with respect to parameter $\alpha$. In the right panel, the plot of $k_0^\alpha$ is provided for $\alpha=-1$. It can be observed that the value of $k^\alpha_{\text{min}}(\theta)$ increases as the anisotropy strength $\beta$ increases. Moreover, in the critical case $\beta=\frac{1}{2}$ of condition \eqref{eqn:energy cond} or \eqref{eqn:energy cond rlx}, $k_{\text{min}}^\alpha(\theta)$ remains finite, which is consistent with Theorem~\ref{thm:existence of k0 c=0}. From the left panel, it can be observed that the supremum of $k_{\text{min}}^\alpha$ increases rapidly as $|\alpha|$ becomes larger. When $-1\leq \alpha\leq 1$, the growth is moderate,  but once $|\alpha|>1$, the stabilizing term $k_{\text{min}}^\alpha(\theta)\boldsymbol{n}(\theta)\boldsymbol{n}(\theta)^T$ in the surface energy matrix $\hat{\boldsymbol{G}}_k^\alpha(\theta)$ becomes significantly larger in magnitude than the other components.

\begin{figure}[htbp]
  \centering
  \includegraphics[width=0.515\textwidth]{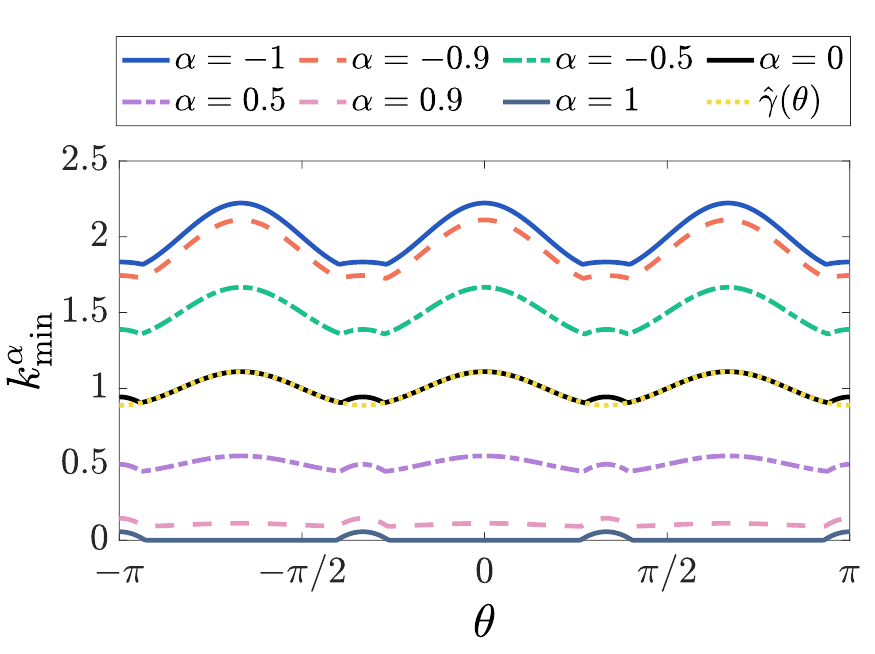}\includegraphics[width=0.485\textwidth]{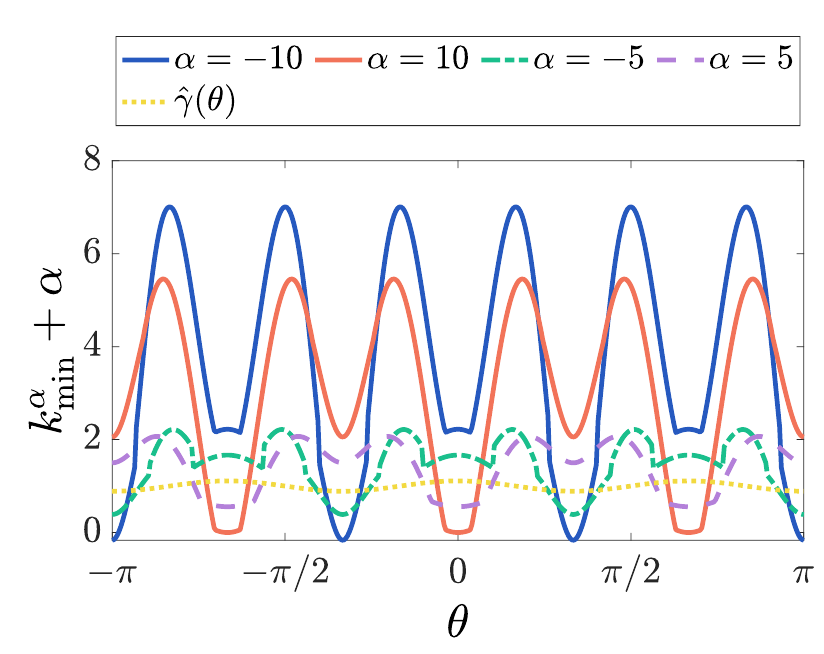}
  \caption{Minimal stabilizing functions for different values of $\alpha$ in Case I with $\beta=1/9$.}
  \label{fig:stab func}
\end{figure}

Plots of $k_{\text{min}}^\alpha(\theta)$ and $k_{\text{min}}^\alpha(\theta)+\alpha$ are given in Figure~\ref{fig:stab func} for different values of $\alpha$. The left panel shows that, for small $\alpha$, $k^\alpha_{\text{min}}(\theta)$ exhibits an approximately linear dependence on $\alpha$. For large values of $|\alpha|$, we use $k_{\text{min}}^\alpha+\alpha$ as an indicator of the magnitude of the regularization term in $\hat{\boldsymbol{G}}_k^\alpha(\theta)$. As shown in the right panel of Figure~\ref{fig:stab func}, the regularization term becomes significantly larger than $\hat{\gamma}(\theta)$, which is consistent with the conclusion drawn from Figure~\ref{fig:stabfunccompare}. 

\subsubsection{Weighted mesh ratio}

\begin{figure}[htbp]
  \centering
  \includegraphics[width=0.5\textwidth]{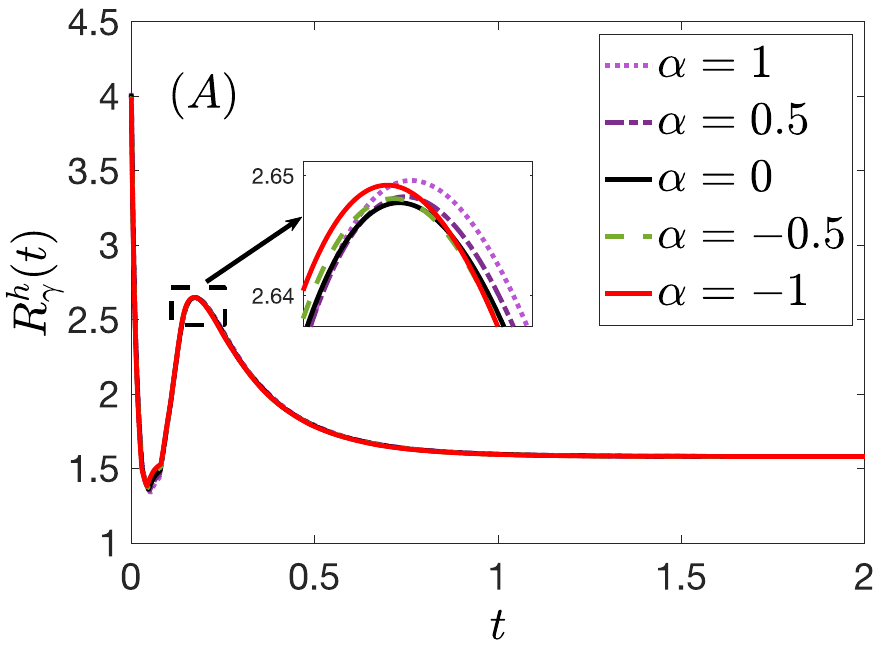}\includegraphics[width=0.5\textwidth]{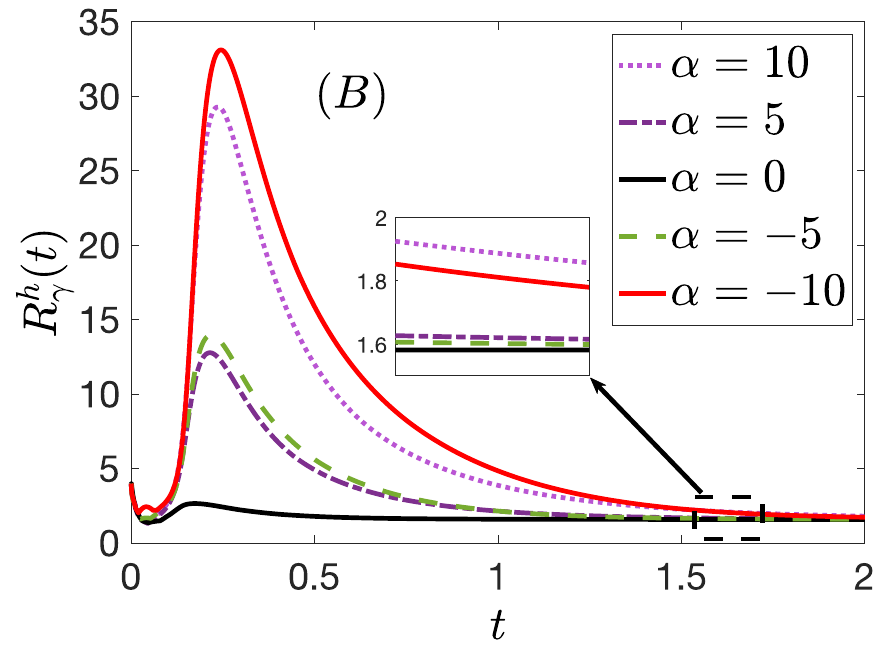}
  \caption{Weighted mesh ratio of the SP-PFEM \eqref{eqn:SP-PFEM sd} of an ellipse with major axis 4 and minor axis 1 under anisotropic surface diffusion with anisotropy $\hat{\gamma}(\theta)=1+\frac{1}{16}\cos 4\theta$ for different values of $\alpha$. The mesh sizes are chosen as $(h,\tau)=(2^{-6},4^{-6})$.}
  \label{fig:mesh}
\end{figure}

To test the mesh quality, we introduce the weighted mesh ratio $R^h_\gamma(t)$ as follows: \begin{equation}
  R^h_\gamma(t)\Bigg|_{t=t_m}\coloneqq\frac{\max\limits_{1\leq j\leq N}\hat{\gamma}(\theta_j)|\boldsymbol{h}_j^m|}{\min\limits_{1\leq j\leq N}\hat{\gamma}(\theta_j)|\boldsymbol{h}_j^m|},\qquad m\geq 0.
\end{equation} As the anisotropic curvature flow causes the curve to collapse into a singularity, we conduct experiments on anisotropic surface diffusion to examine the long-time mesh properties of the proposed method.

Figure~\ref{fig:mesh} shows the weighted mesh ratio $R^h_\gamma(t)$ of the SP-PFEM \eqref{eqn:SP-PFEM sd} for an ellipse with major axis 4 and minor axis 1 under anisotropic surface diffusion with anisotropy $\hat{\gamma}(\theta)=1+\frac{1}{16}\cos 4\theta$. The results indicate that the weighted mesh ratio $R^h_\gamma$ converges to a constant as $t\to\infty$, indicating that the SP-PFEM \eqref{eqn:SP-PFEM sd} achieves an asymptotically quasi-uniform mesh distribution. Moreover, it can be observed that for small values of $|\alpha|$,  the behavior of the weighted mesh ratio $R_\gamma^h$ remains nearly identical. However, when $|\alpha|$ is large, $R^h_\gamma$ can become significantly larger during the early stages of evolution. Therefore, to achieve better mesh quality in practical applications, smaller values of $|\alpha|$ are recommended.

\subsection{Area decay rate and energy dissipation}

\begin{figure}[htbp]
  \centering
  \includegraphics[width=1\textwidth]{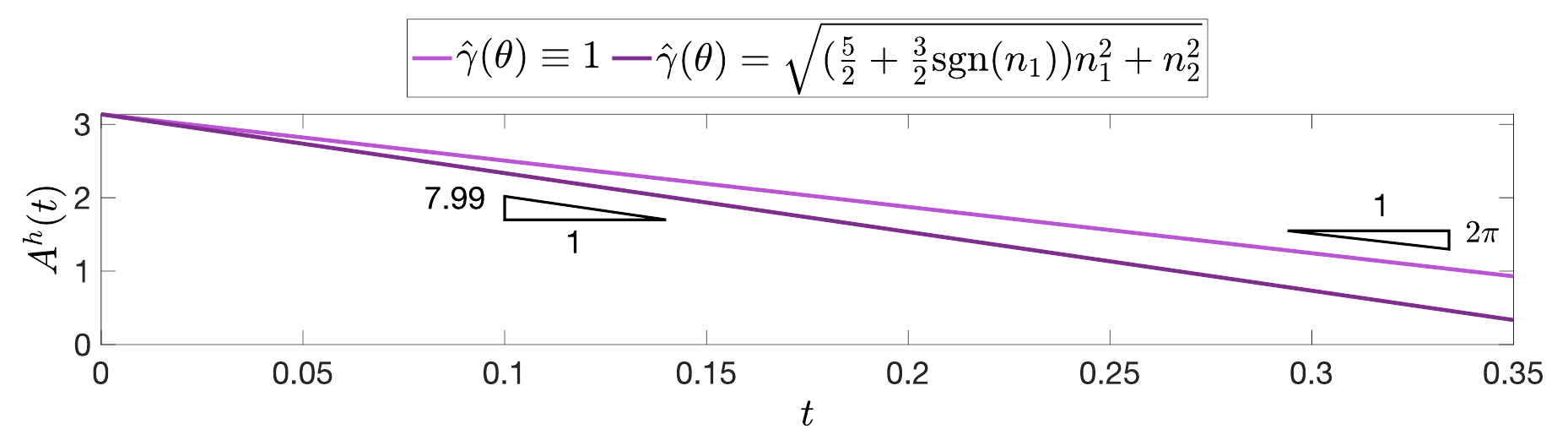}
  \caption{Area and area decay rate of an ellipse with major axis 4 and minor axis 1 under anisotropic curvature flow by SP-PFEM \eqref{eqn:SP-PFEM} with $\alpha=0$.}
  \label{fig:Area-rate}
\end{figure}

Figure~\ref{fig:Area-rate} illustrates the evolution of the area of an ellipse with major axis 4 and minor axis 1 under anisotropic curvature flows by SP-PFEM \eqref{eqn:SP-PFEM} with two surface energy densities: (i) the isotropic surface energy density $\hat{\gamma}(\theta)\equiv 1$; (ii) Case II. It can be observed that the area decreases approximately linearly.

For a simple closed curve evolving under the isotropic curvature flow, the area decay rate $\frac{\mathrm{d}A(t)}{\mathrm{d}t}=\int_{\Gamma}\kappa\,\mathrm{d}s=-2\pi$ remains constant. For anisotropic surface energies, the area decay rate tends to approach a constant value $-\int_{\Gamma}\mu\,\mathrm{d}s$. Here, $\Gamma$ represents the Wulff shape associated with the anisotropy $\hat{\gamma}(\theta)$ \cite{palmer1998stability}, scaled to have the same enclosed area as the initial curve. In particular, for the surface energy density of Case II, we have $-\int_{\Gamma}\mu\,\mathrm{d}s\approx -7.9858$ \cite{bao2024structure}.

\begin{figure}[htbp]
  \centering
  \includegraphics[width=0.5\textwidth]{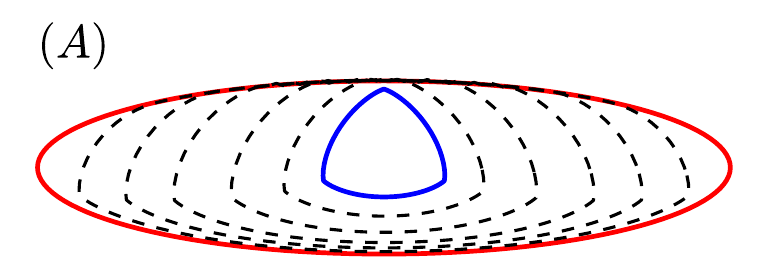}\includegraphics[width=0.5\textwidth]{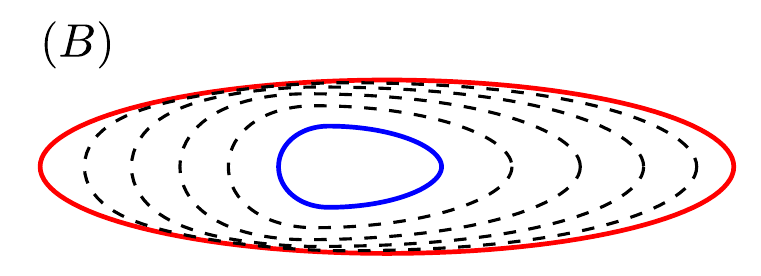}
  \caption{Morphological evolution of an ellipse with major axis 4 and minor axis 1 under anisotropic curvature flow by SP-PFEM \eqref{sec:SP-PFEM} for: (A) Case I with $\beta=1/7$; and (B) Case II. The red lines represent the initial shapes. The blue lines represent the numerical curves at (A) $t=0.45$ and (B) $t=0.35$. The black dashed lines represent the intermediate curves. Parameters are chosen as $\alpha=0,(h,\tau)=(2^{-7},4^{-7})$.}
  \label{fig:morph-evo}
\end{figure}

\begin{figure}[htbp]
  \centering
  \includegraphics[width=0.5\textwidth]{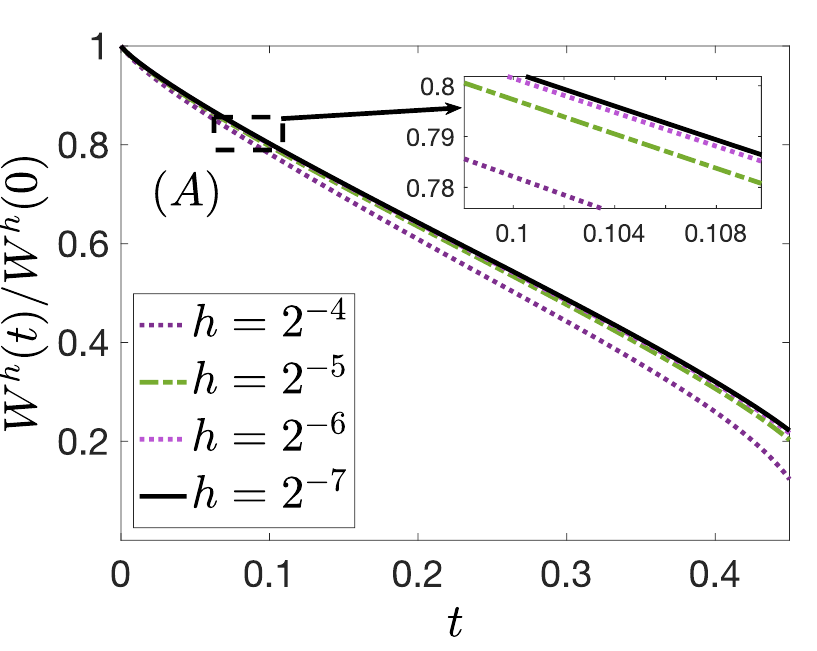}\includegraphics[width=0.5\textwidth]{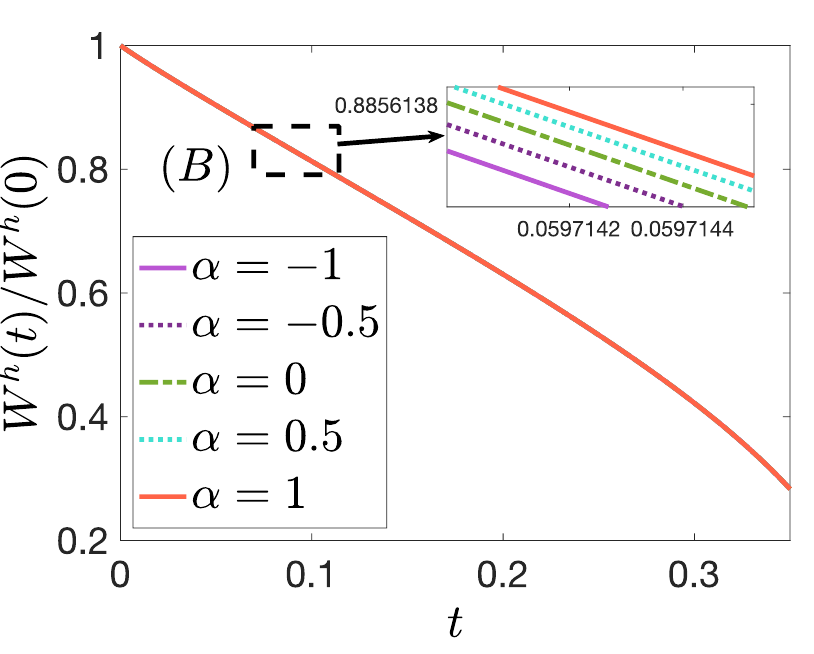}
  \caption{Normalized energy of SP-PFEM \eqref{eqn:SP-PFEM} for (A) Case I with $\beta=1/7$ and $\alpha=0$; and (B) Case II with different values of $\alpha$. The mesh sizes for figure (B) are chosen as $(h,\tau)=(2^{-7},4^{-7})$. }
  \label{fig:energy-evo}
\end{figure}

Figure~\ref{fig:morph-evo}--Figure~\ref{fig:energy-evo} plots the morphological evolution and normalized energy of an ellipse with major axis 4 and minor axis 1 under anisotropic curvature flow by SP-PFEM \eqref{eqn:SP-PFEM}. The anisotropy is chosen from the energy density in (A) Case I with $\beta=1/7$ and (B) Case II. The observation from Figure~\ref{fig:morph-evo}--Figure~\ref{fig:energy-evo} reveals that: \begin{itemize}
  \item The normalized energy is monotonically decreasing when $\hat{\gamma}(\theta)$ satifies the energy stability condition \eqref{eqn:energy cond}.
  \item The energy decay curves of SP-PFEM \eqref{eqn:SP-PFEM} for different values of $\alpha$ exhibit very similar shapes, indicating robustness of the method with respect to the parameter $\alpha$.
\end{itemize}

\subsection{Morphological evolution under anisotropic curvature flow}

In the following, we employ SP-PFEM \eqref{eqn:SP-PFEM} to simulate the morphological evolution of closed curves governed by anisotropic curvature flow. Unless otherwise specified, the mesh size is set to $(h,\tau)=(2^{-7},4^{-7})$.

\begin{figure}[htbp]
  \centering
  \includegraphics[width=0.458\textwidth]{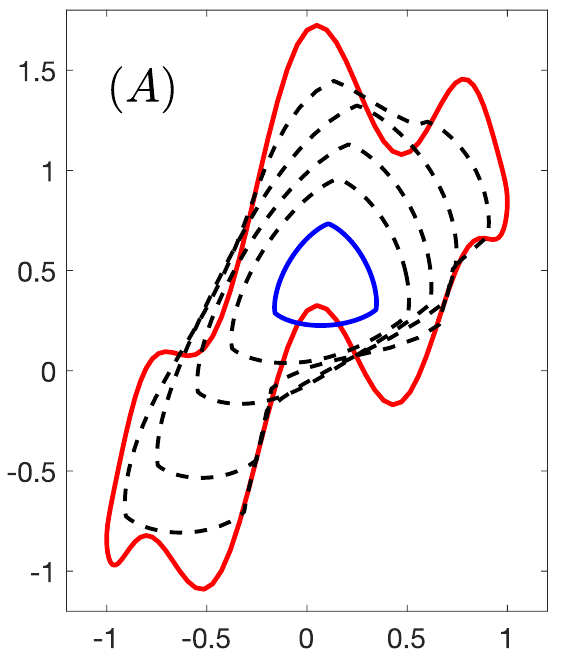}\includegraphics[width=0.45\textwidth]{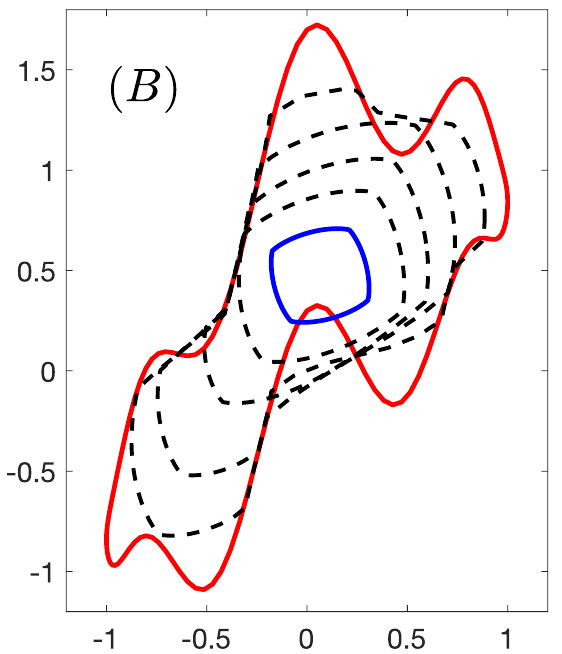}
  \caption{Morphological evolution of a non-convex initial curve with large curvature variations under anisotropic curvature flow by SP-PFEM \eqref{eqn:SP-PFEM} with $\alpha=2$. The red lines represent the initial shapes. The blue lines represent the numerical curves at $t=0.32$. The black dashed lines represent the intermediate curves.}
  \label{fig:morph-evo-1}
\end{figure}

\begin{figure}[htbp]
  \centering
  \includegraphics[width=0.5\textwidth]{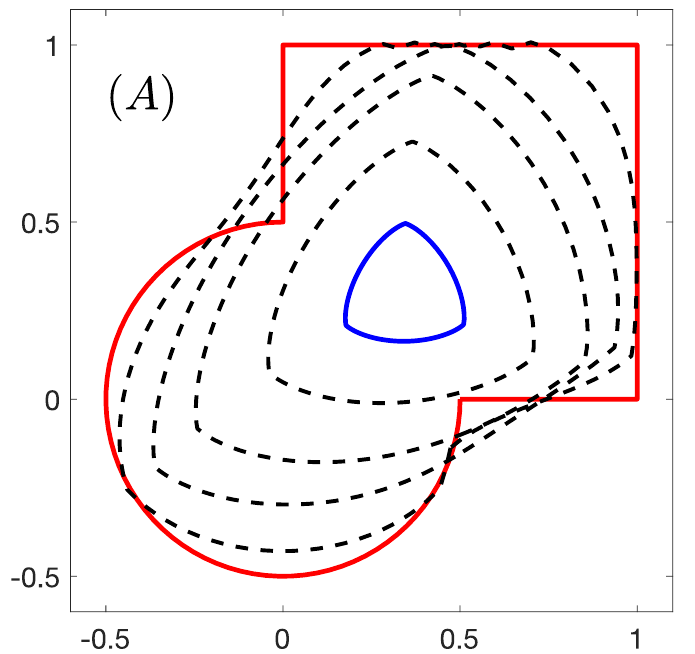}\includegraphics[width=0.5\textwidth]{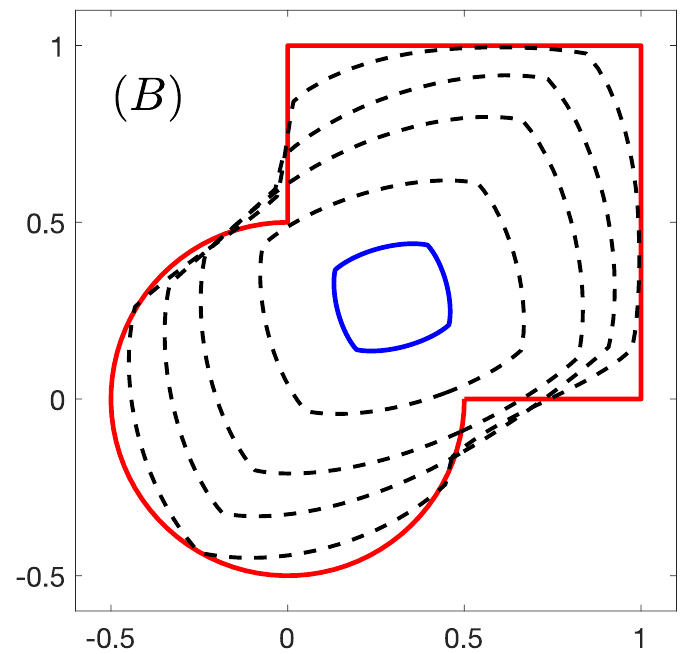}
  \caption{Morphological evolution of an initial curve with $C^0$-smoothness under anisotropic curvature flow by SP-PFEM \eqref{eqn:SP-PFEM} with $\alpha=-2$. The red lines represent the initial shapes. The blue lines represent the numerical curves at $t=0.24$. The black dashed lines represent the intermediate curves.}
  \label{fig:morph-evo-2}
\end{figure}

We consider the following non-convex initial curve with large variations in the curvature \cite{sevcovic2001evolution}: \begin{equation}
  \left\{\begin{array}{l}
    x=\cos (2\pi\rho),\qquad \rho\in [0,1],\\
    y=\frac{1}{2}\sin (2\pi\rho) +\sin(\cos (2\pi\rho))+\sin (2\pi\rho)\left(\frac{1}{5}+\sin (2\pi\rho) \sin^2 (6\pi\rho) \right),\\
  \end{array}\right.
\end{equation} and a $C^0$-smooth initial curve with sharp corners and concavities. Results are displayed in Figure~\ref{fig:morph-evo-1} and Figure~\ref{fig:morph-evo-2}, respectively.

In Figure~\ref{fig:morph-evo-1}--Figure~\ref{fig:morph-evo-2}, two types of surface energy densities are applied in our simulation: (A) $\hat{\gamma}(\theta)=1+\frac{1}{7}\cos 3\theta$; and (B) $\hat{\gamma}(\theta)=1+\frac{1}{12}\cos 4(\theta+\frac{\pi}{6})$. Observation from Figure~\ref{fig:morph-evo-1}--Figure~\ref{fig:morph-evo-2}  reveals that, under the anisotropic curvature flow, the curve will gradually shrink, while its shape tends to evolve toward a common form determined by the same anisotropy.

\subsubsection*{Evolution of a self-intersecting curve}

\begin{figure}[htbp]
  \centering
  \includegraphics[width=1\textwidth]{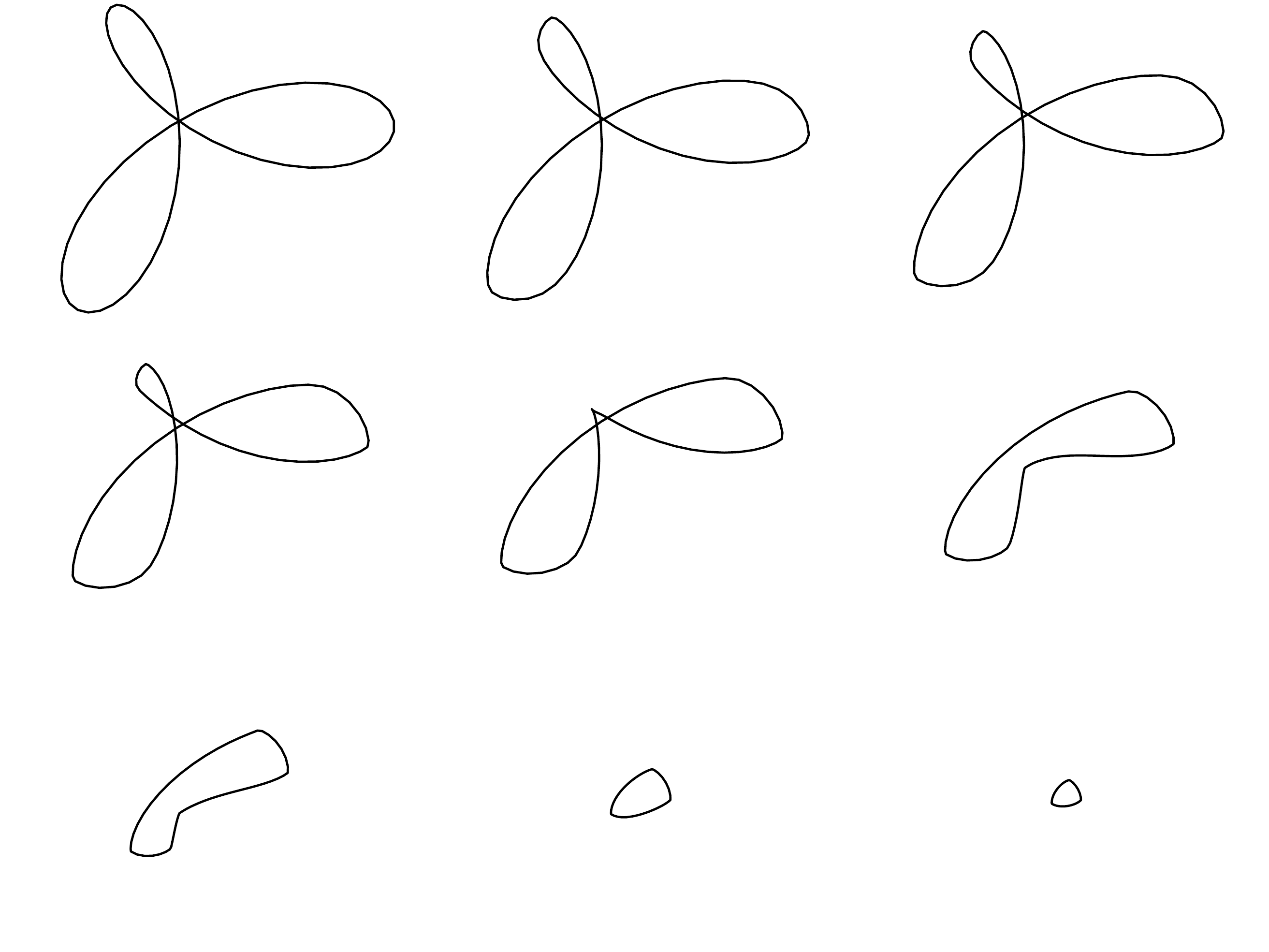}
  \caption{Plots of the anisotropic evolution of a self-intersecting initial curve at times $t=0,0.028,0.056,0.084,0.112,0.2,0.3,0.4,0.42$. A 3-fold anisotropy $\hat{\gamma}(\theta)=1+\frac{1}{9}\cos 3\theta$ is used for the surface energy density. Other parameters are set as $\alpha=-1, h=2^{-7}, \tau=0.005$.}
  \label{fig:morphrose}
\end{figure}

As is theoretically known, self-intersecting curves often develop singularities during curvature-driven evolution. The local geometry around these points becomes highly complex, posing significant challenges for numerical simulation. Figure~\ref{fig:morphrose} illustrates the anisotropic evolution of an initial curve with a triple self-intersection point. A similar example was presented in \cite[Fig 7.2]{dziuk1999discrete} for the isotropic case. The numerical results demonstrate that our method remains effective in capturing cusp singularities even in the presence of anisotropy.

\subsection{Numerical results for other anisotropic flows}

In this part, we present numerical experiments for two types of anisotropic flows that preserve area: the area-conserved anisotropic curvature flow and anisotropic surface diffusion. The following normalized area loss is introduced as an indicator to numerically demonstrate the area conservation property \cite{zhang2025stabilized}: \begin{equation}
 \left.\frac{\Delta A^h(t)}{A^h(0)}\right|_{t=t_m}\coloneqq\frac{A^{m+1}-A^0}{A^0},\qquad \forall m\geq 0.
\end{equation}

\subsubsection{Area-conserved anisotropic curvature flow}

\begin{figure}[htbp]
  \centering
  \includegraphics[width=0.8\textwidth]{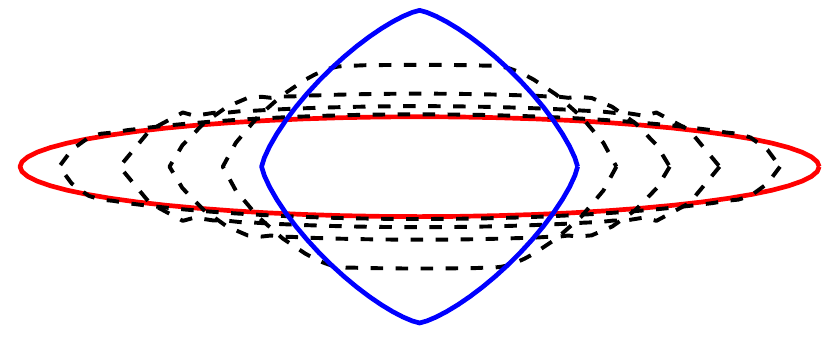}
  \caption{Morphological evolution of an ellipse with major axis 8 and minor axis 1 under area-conserved anisotropic curvature flow by SP-PFEM \eqref{eqn:SP-PFEM acmcf} with the $l^4$-norm metric anisotropy $\hat{\gamma}(\theta)=\sqrt[4]{n_1^4+n_2^4}$, where $\boldsymbol{n}=(n_1,n_2)^T=(-\sin\theta,\cos\theta)^T$. The red and blue lines represent the initial shape and the numerical equilibrium, respectively; and the black dash lines represent the intermediate curves. Parameters are set to $\alpha=2.5,(h,\tau)=(2^{-6},4^{-6})$.}
  \label{fig:acmcf}
\end{figure}

\begin{figure}[htbp]
  \centering
  \includegraphics[width=0.525\textwidth]{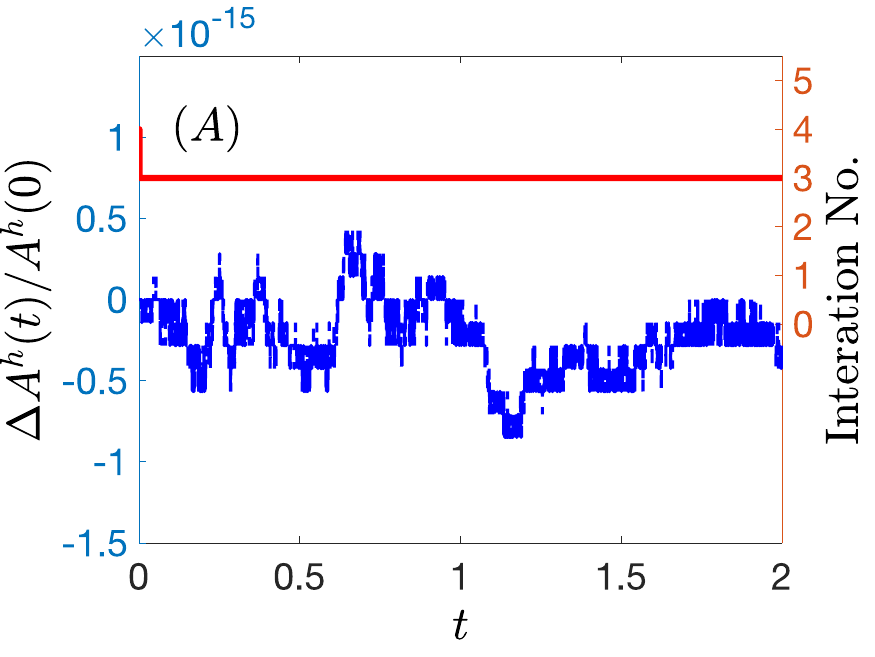}\includegraphics[width=0.475\textwidth]{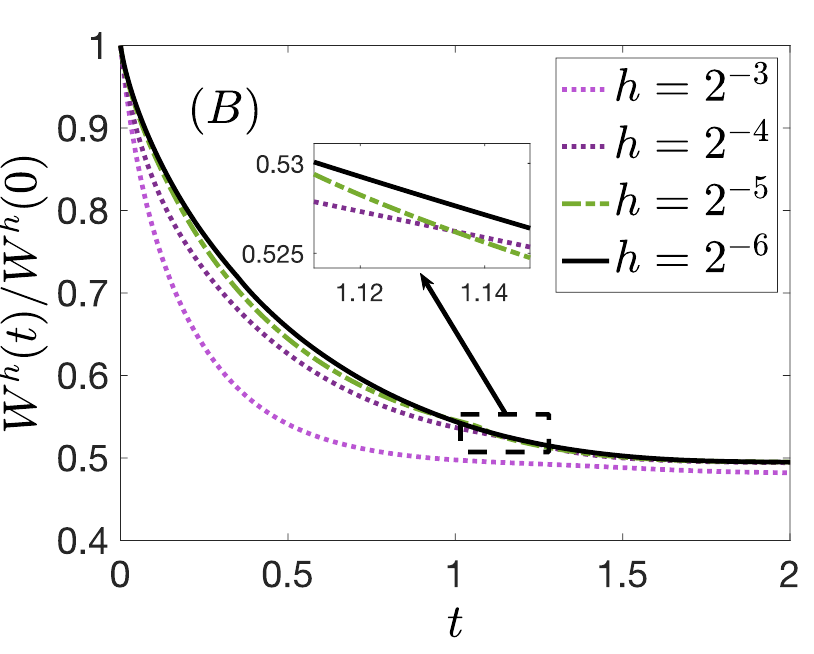}
  \caption{Temporal evolution of (A) normalized area loss (blue dash line) and iteration number (red line); and (B) normalized energy of SP-PFEM \eqref{eqn:SP-PFEM acmcf} for the anisotropic curvature flow in Figure~\ref{fig:acmcf}.}
  \label{fig:dA-energy-acmcf}
\end{figure}

Figure~\ref{fig:acmcf} illustrates the morphological evolution of an ellipse with major axis 8 and minor axis 1 under the area-conserved anisotropic curvature flow. The surface enegrgy density is the so-called $l^4$-norm metric anisotropy: $\hat{\gamma}(\theta)=\sqrt[4]{n_1^4+n_2^4}$, where $\boldsymbol{n}=(n_1,n_2)^T=(-\sin\theta,\cos\theta)^T$ \cite{bao2023symmetrized2D}. The normalized area loss and the number of Newton's iteration are plotted in Figure~\ref{fig:dA-energy-acmcf} (A). The normalized area loss is observed to be on the order of $10^{-15}$, which is very close to rounding error. This indicates that the area is conserved in the sense of machine precision. The number of Newton iterations quickly decreases from 4 at the beginning to 3. And the normalized energy is monotonically decreasing, as shown in Figure~\ref{fig:dA-energy-acmcf} (B). 

\begin{figure}[htbp]
  \centering
  \includegraphics[width=0.5\textwidth]{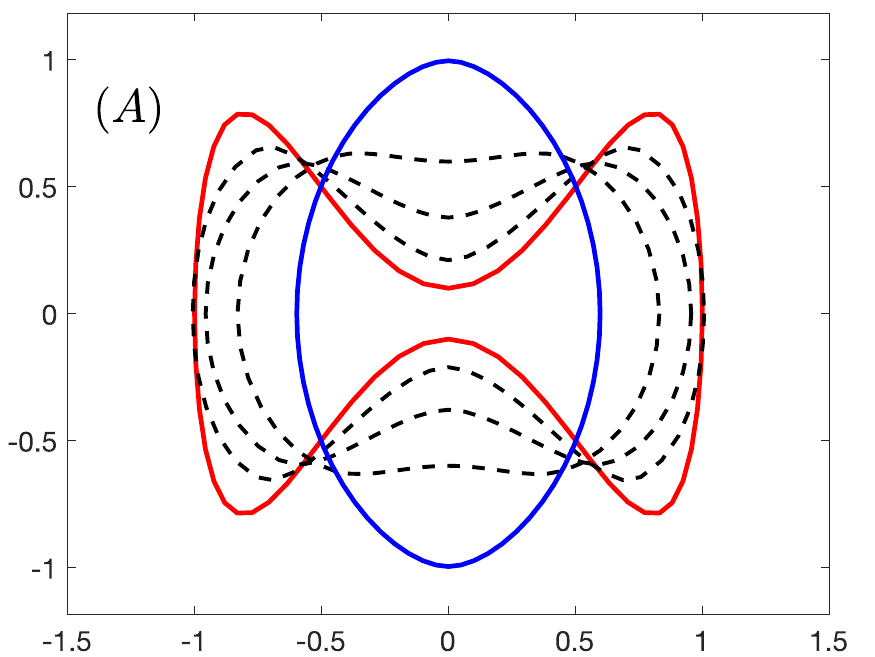}\includegraphics[width=0.5\textwidth]{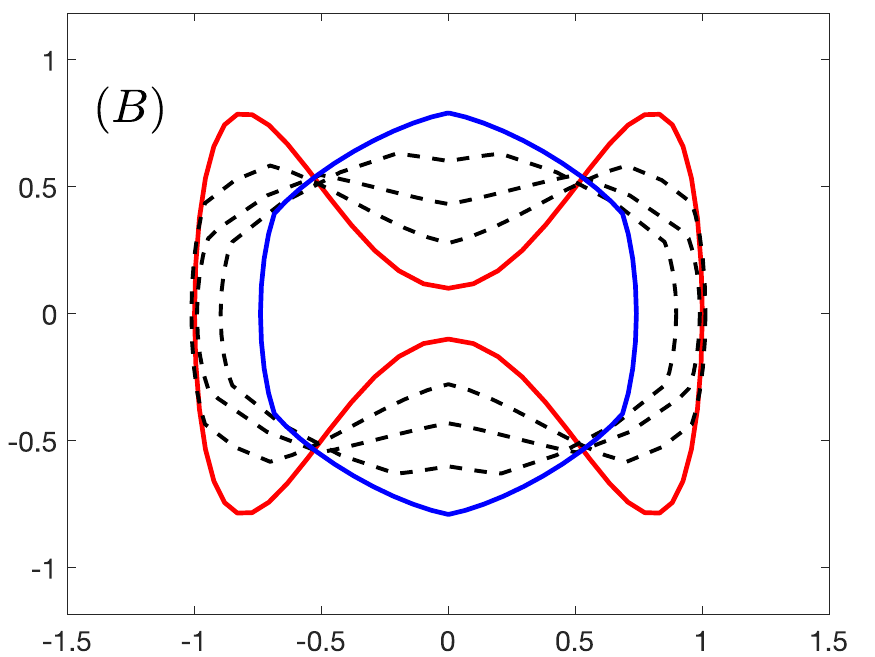}
  \caption{Morphological evolution of a bowtie-shaped curve under area-conserved anisotropic curvature flow with anisotropy (A) $\hat{\gamma}(\theta)=1+\frac{1}{28}\cos 2\theta$; and (B) $\hat{\gamma}(\theta)=1+\frac{1}{30}\cos 6\theta$. The red and blue lines represent the initial shape and the numerical equilibrium, respectively; and the black dash lines represent the intermediate curves. Other parameter are set as $\alpha=0.5, (h,\tau)=(2^{-6},4^{-6})$.}
  \label{fig:acmcf morph}
\end{figure}

\begin{figure}[htbp]
  \centering
  \includegraphics[width=1\textwidth]{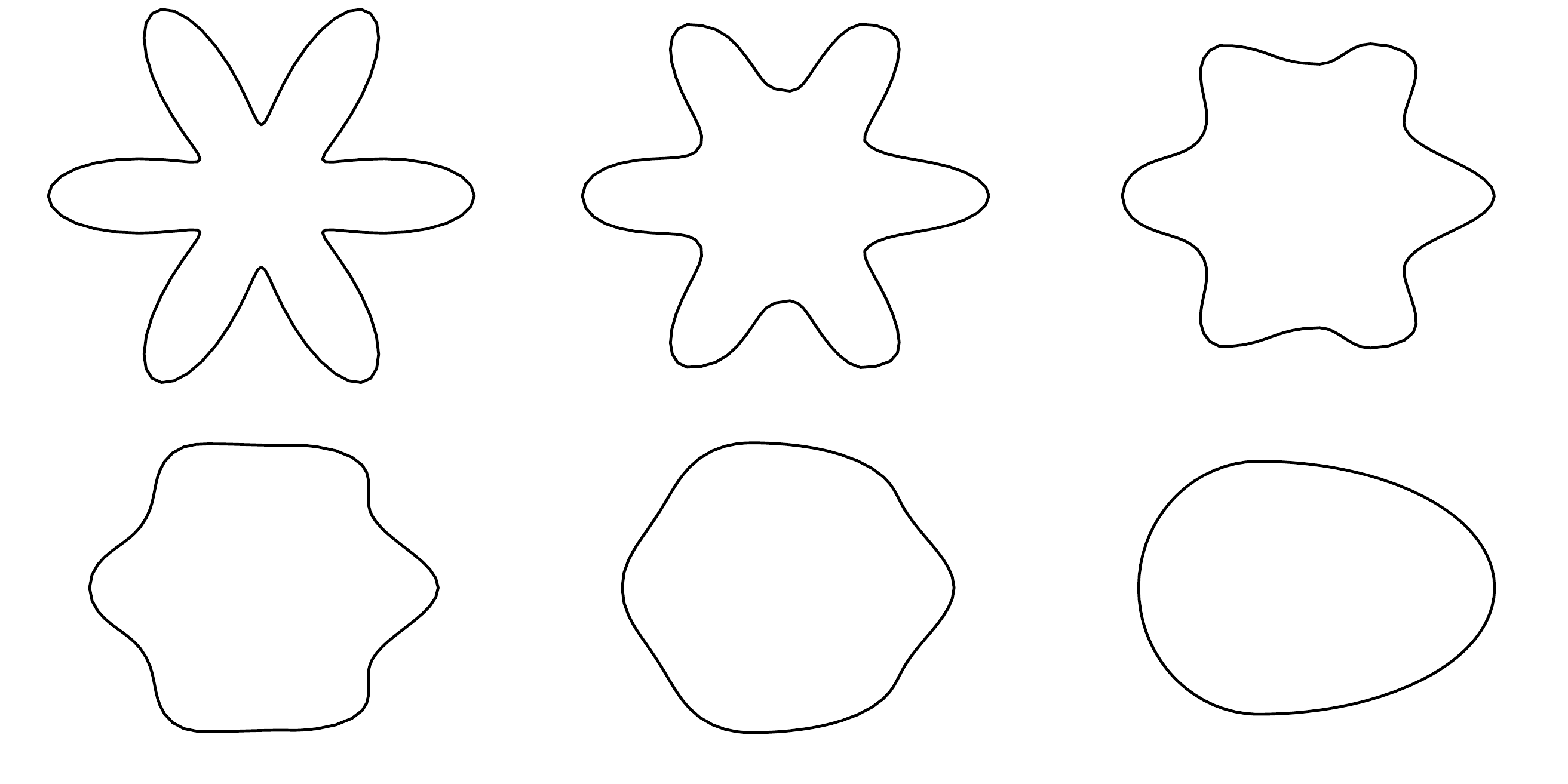}
  \caption{Snapshots of a flower initial curve under area-conserved anisotropic curvature flow with anisotropy Case II at times $t=0,0.05,0.15,0.25,0.4,2.5$. Other parameter are set as $\alpha=-0.5, h=2^{-7}, \tau=0.005$.}
  \label{fig:morphflower}
\end{figure}

We also apply SP-PFEM \eqref{eqn:SP-PFEM acmcf} to simulate the morphological evolution of some complex initial curves governed by area-conserved anisotropic curvature flow. The results are shown in Figure~\ref{fig:acmcf morph}--Figure~\ref{fig:morphflower}. Two types of curves are considered as follows: \begin{itemize}
  \item a bowtie-shaped curve: \begin{equation}
    \left\{\begin{array}{l}
      x=\cos(2\pi\rho),\\
      y=2\sin(2\pi\rho)-1.9\sin^3(2\pi\rho),
    \end{array}\right. \qquad \rho\in [0,1];
  \end{equation}
  \item a flower-shaped curve: \begin{equation}
    \left\{\begin{array}{l}
      x=\left(2+\cos(12\pi\rho)\right)\cos(2\pi\rho),\\
      y=\left(2+\cos(12\pi\rho)\right)\sin(2\pi\rho),
    \end{array}\right. \qquad \rho\in [0,1].
  \end{equation}
\end{itemize} It can be observed that, as theoretically predicted, both curves gradually evolve toward the Wulff shape corresponding to the given anisotropy \cite{yazaki2002area}.

\subsubsection{Anisotropic surface diffusion}

\begin{figure}[htbp]
  \centering
  \includegraphics[width=0.8\textwidth]{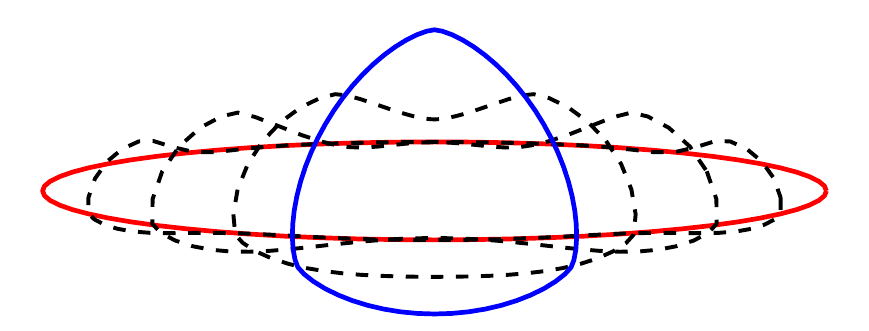}
  \caption{Morphological evolution of an ellipse with major axis 8 and minor axis 1 governed by anisotropic surface diffusion for Case I with $\beta=1/9$. The red and blue lines
    represent the initial shape and the numerical equilibrium, respectively; and the black dash lines represent the intermediate curves. Mesh sizes are set to $(h,\tau)=(2^{-6},4^{-6})$.}
  \label{fig:sd}
\end{figure}

\begin{figure}[htbp]
  \centering
  \includegraphics[width=0.515\textwidth]{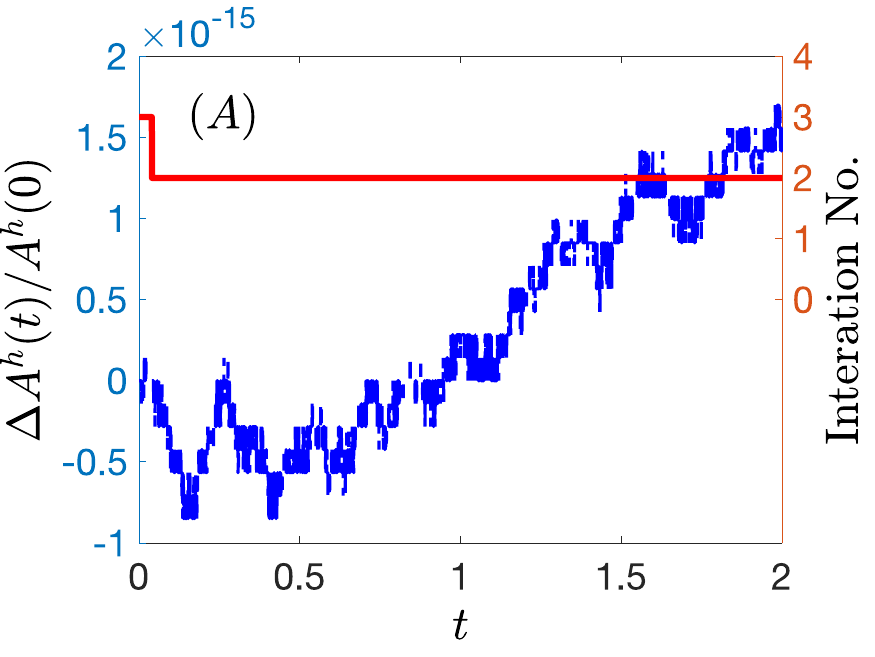}\includegraphics[width=0.485\textwidth]{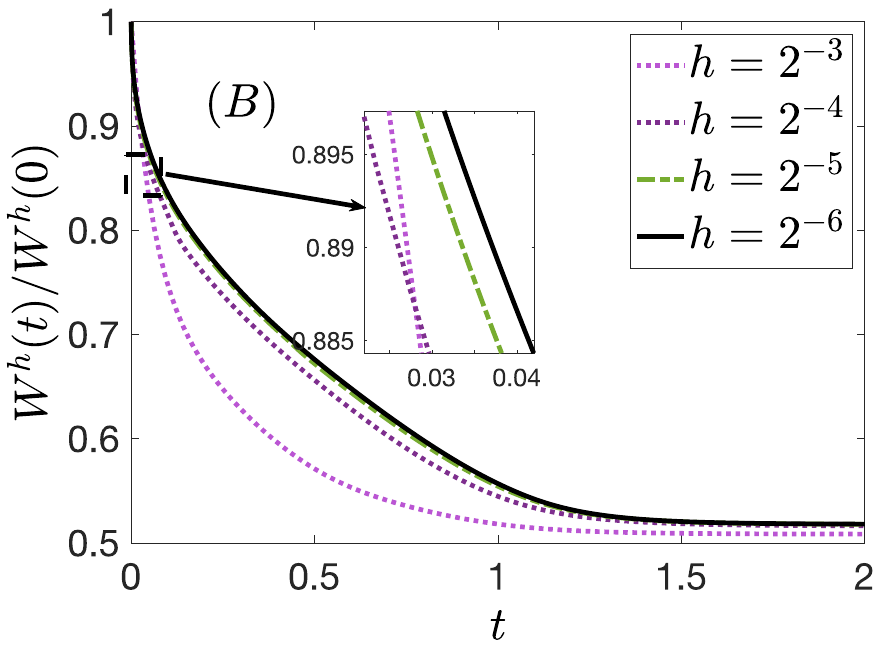}
  \caption{Temporal evolution of (A) normalized area loss (blue dash line) and iteration number (red line); and (B) normalized energy of SP-PFEM \eqref{eqn:SP-PFEM sd} for the anisotropic surface diffusion in Figure~\ref{fig:sd}.}
  \label{fig:dA-energy-sd}
\end{figure}

Figure~\ref{fig:sd}--Figure~\ref{fig:dA-energy-sd} illustrate the evolution of an 8:1 ellipse under anisotropic surface diffusion, along with the corresponding changes in the normalized area, the interation number and the normalized energy. The surface energy density is chosen as in Case I with $\beta=1/9$. Results in  Figure~\ref{fig:dA-energy-sd} confirm that our method is numerically area conservative and energy dissipative when condition \eqref{eqn:energy cond} is satisfied.
 
\begin{figure}[htbp]
  \centering
  \includegraphics[width=1\textwidth]{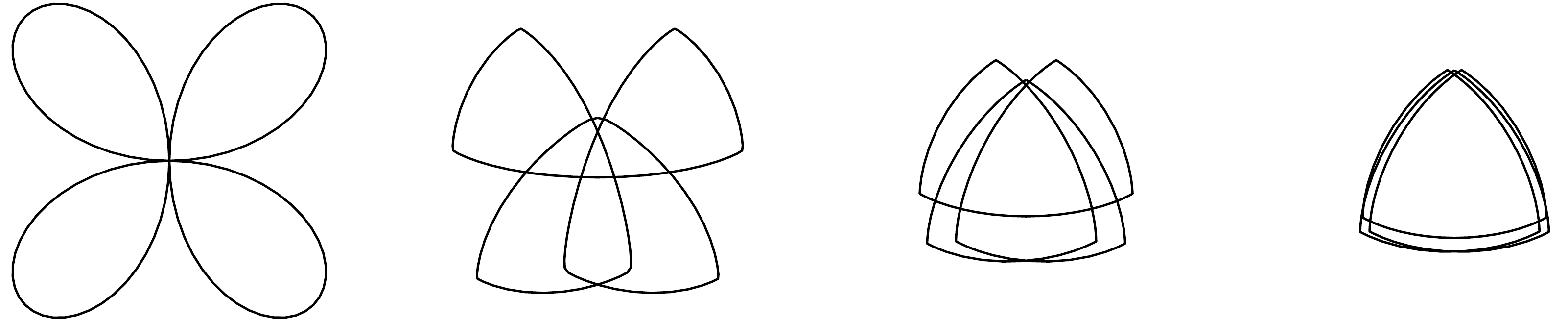}
  \caption{Anisotropic surface diffusion of a quadrifolium with anisotropy Case I at times $t=0,0.15,0.5,0.9$. Parameters are set as $\alpha=0.5, \beta=1/5, h=2^{-7}, \tau=0.001$.}
  \label{fig:morph4leaf}
\end{figure}

Figure~\ref{fig:morph4leaf} presents numerical experiments on a quadrifolium. Escher et al. \cite{escher1998surface} and Barrett et al. \cite{barrett2007parametric} have previously simulated the evolution of the quadrifolium under isotropic surface diffusion, demonstrating that the limiting curve is a triply covered circle. Our results show that the behavior under anisotropic surface diffusion is qualitatively similar: for positive time, the winding number of the curve with respect to the origin remains unchanged, and the limiting shape becomes a triply covered Wulff shape corresponding to the given anisotropy (cf. Figure~\ref{fig:morph4leaf}). 

\begin{figure}[htbp]
  \centering
  \includegraphics[width=1\textwidth]{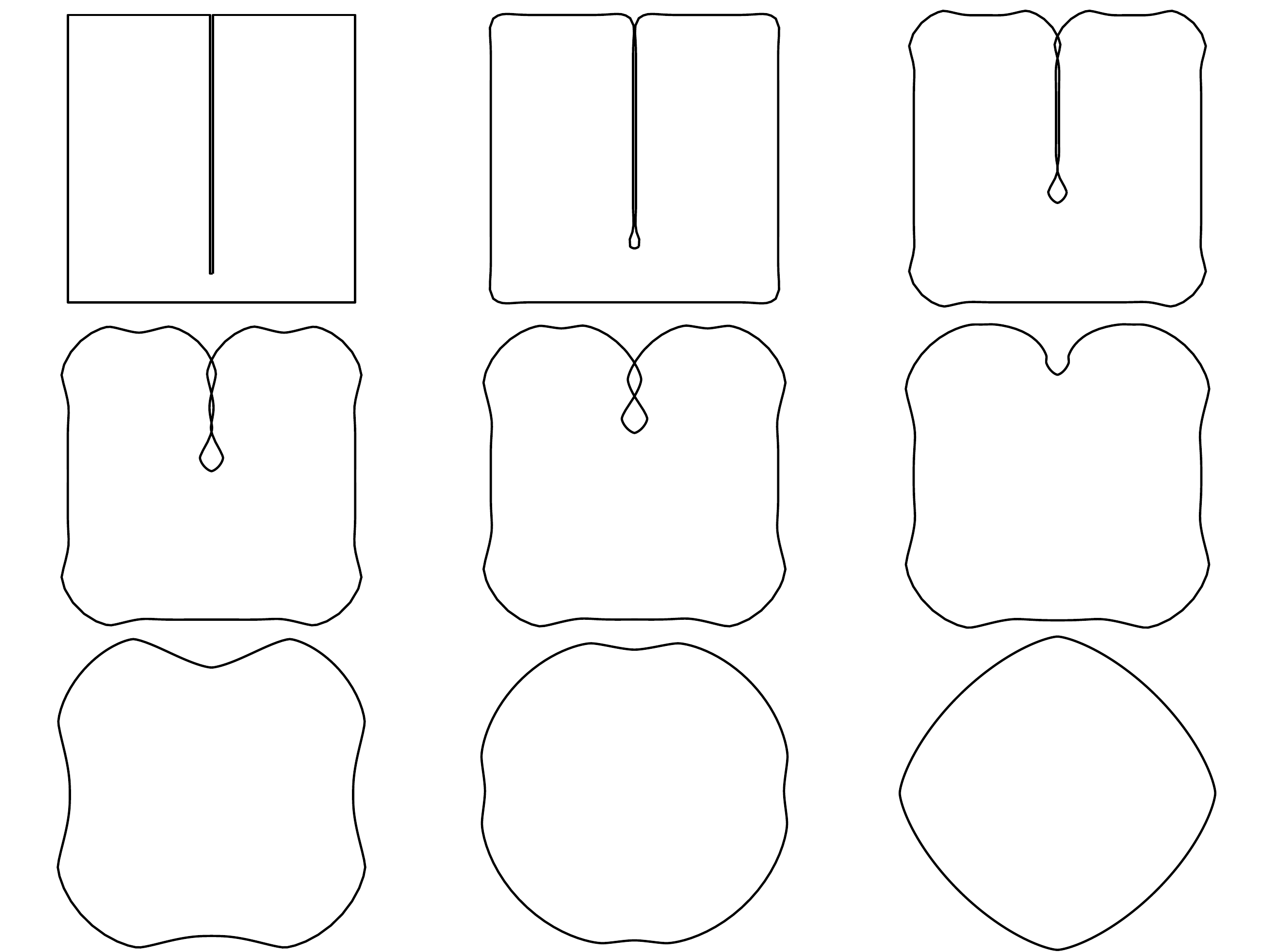}
  \caption{Anisotropic surface diffusion of an almost slit domain with 4-fold anisotropy $\hat{\gamma}(\theta)=1+\frac{1}{16}\cos 4\theta$ at times $t=0,5\times 10^{-6},7\times 10^{-5}, 2.5\times 10^{-4},5\times 10^{-4},7.5\times 10^{-4},0.002,0.02, 0.1$. Parameters are set as $a=1, h=1/288, \tau=1\times 10^{-6}$.}
  \label{fig:morphslit}
\end{figure}

The initial curve in Figure~\ref{fig:morphslit} is given by a $2\times 2$ square minus a thin rectangle ($0.02\times 1.8$). The shape was described by B{\"a}nsch \cite{bansch2005finite} as an \textit{almost slit domain}. Previous studies \cite{bansch2005finite, barrett2007parametric} have been shown that this curve undergoes a pinch-off phenomenon under isotropic surface diffusion due to a curve crossing. Our numerical simulations reveal that such behavior persists under anisotropic surface diffusion as well. Notably, compared to the methods of \cite{bansch2005finite,barrett2007parametric}, our approach guarantees exact area conservation during the entire evolution process.

\begin{figure}[htbp]
  \centering
  \includegraphics[width=1\textwidth]{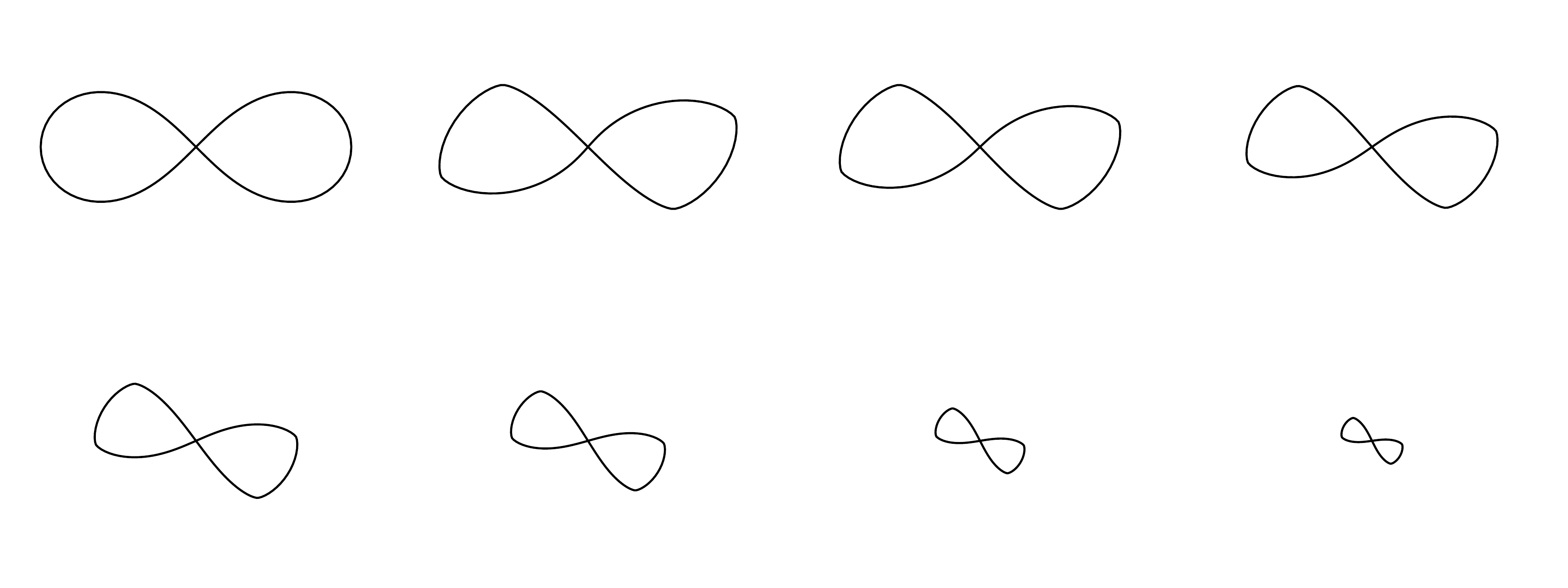}
  \includegraphics[width=1\textwidth]{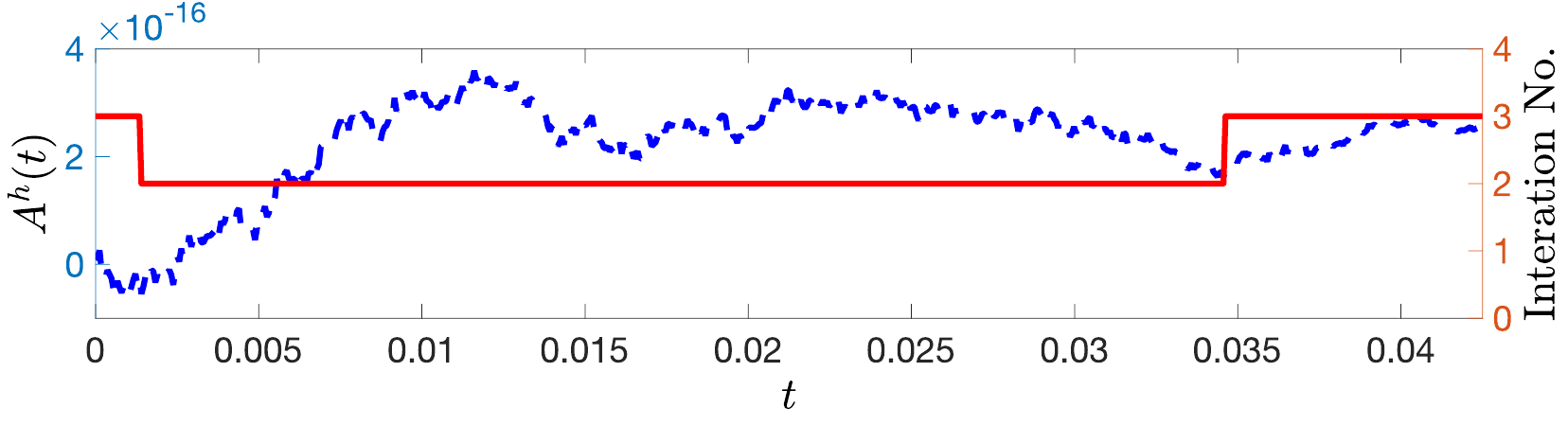}
  \caption{Evolution of the Bernoulli's lemniscate governed by anisotropic surface diffusion with anisotropy Case I at times $t=0,0.008,0.015,0.025,0.035,0.04,0.0425$. Parameters are chosen as $\beta=1/9, (h,\tau)=(2^{-7},4^{-7})$.}
  \label{fig:morph8curve}
\end{figure}

We also conducted experiments on the lemniscate of Bernoulli, the results can be found in Figure~\ref{fig:morph8curve}. As is known, the signed area enclosed by the Bernoulli's lemniscate is identically zero. Since anisotropic surface diffusion decreases the total energy (i.e. the weighted perimeter) while preserving area, it is reasonable to expect the curve to shrink to a point. Our numerical experiments support this conjecture: the observed enclosed area remains at the order of $10^{-16}$ throughout the simulation, effectively zero within machine precision. Meanwhile, each lobe of the curve gradually approaches the corresponding Wulff shape before ultimately collapsing into a single point. Interestingly, the entire curve exhibits a slow rotational motion during the evolution due to the anisotropic effects. Such a phenomenon does not occur in the isotropic setting, where the curve shrinks in place without changing orientation as shown in \cite{escher1998surface}.

\subsection{Evolution of long thin films under anisotropic surface diffusion}

The morphological evolution of crystal-grown thin films under anisotropic surface diffusion has attracted significant attention in materials science and solid-state physics \cite{thompson2012solid}, with profound applications in heterogeneous catalysis \cite{randolph2007controlling}, quantum dot manufacturing \cite{fonseca2014shapes} and solid-state dewetting \cite{xue2011pattern}. According to studies \cite{dornel2006surface,jiang2012phase}, once the island aspect ratio exceeds a certain critical threshold, the structure becomes unstable and undergoes pinch-off, resulting in the formation of multiple separate islands. In this section, we apply the SP-PFEM \eqref{eqn:SP-PFEM sd} to simulate the pinch-off phenomenon of long thin films under anisotropic surface diffusion. 

\begin{figure}[htbp]
  \centering
  \includegraphics[width=1\textwidth]{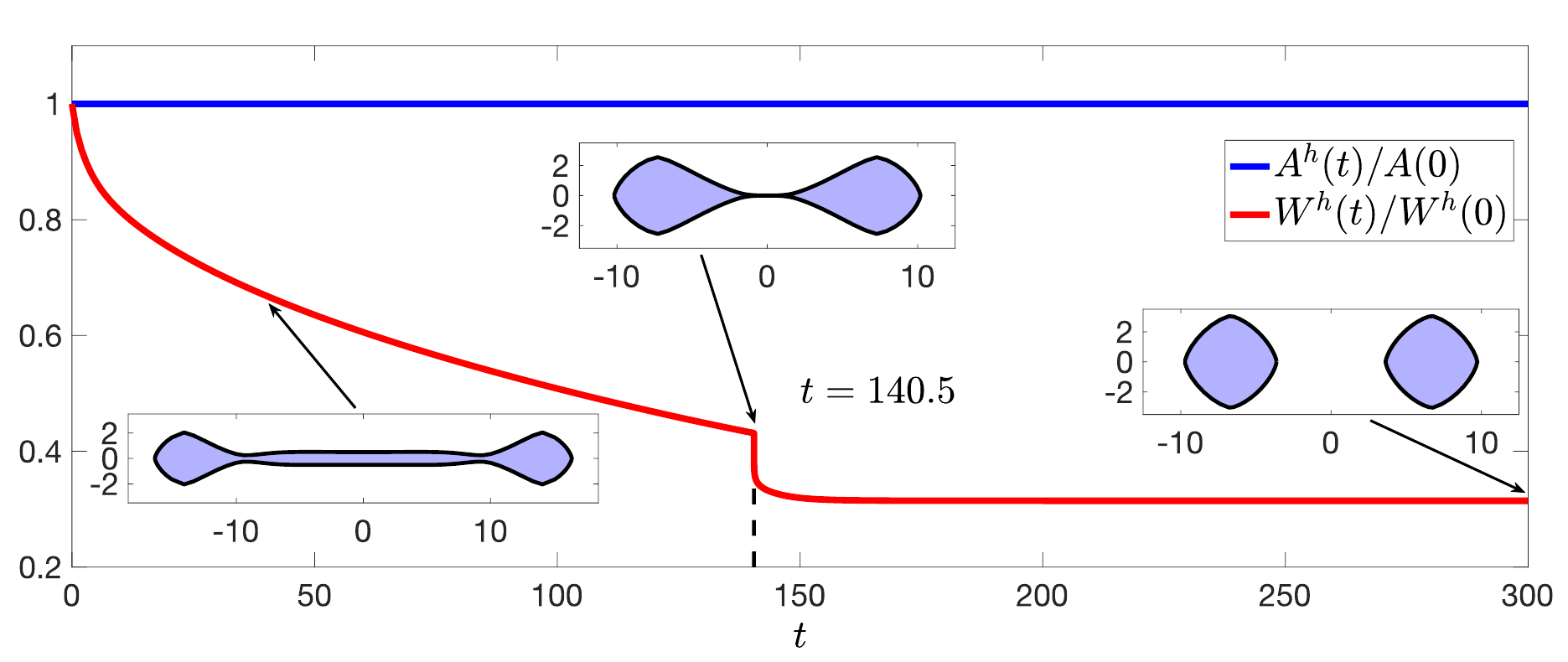}
  \caption{Normalized area and energy of a long thin film (aspect ratio of 50) governed by anisotropic surface diffusion with the 4-fold anisotropy $\hat{\gamma}(\theta)=1+\frac{1}{16}\cos 4\theta$.}
  \label{fig:EandApinchoff}
\end{figure}

First, we perform simulations of the evolution of a long thin film with an aspect ratio of 50 under 4-fold anisotropy $\hat{\gamma}(\theta)=1+\frac{1}{16}\cos 4\theta$. Figure~\ref{fig:EandApinchoff} illustrates the evolution of the normalized area and the normalized energy. In this case, the pinch-off occurs at $t=140.5$. It can be observed that,  when the pinch-off happens, the energy exhibits a sharp drop while the area reamins conserved.

\begin{figure}[htbp]
  \centering
  \includegraphics[width=1\textwidth]{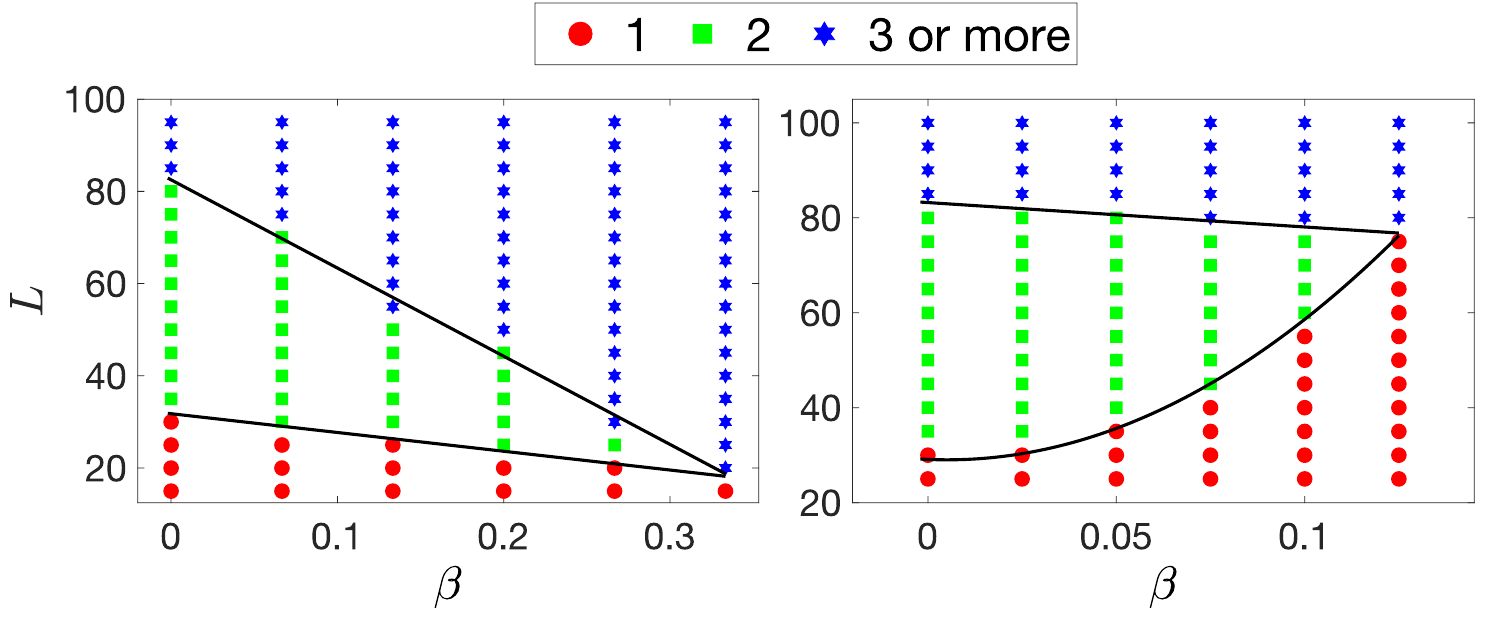}
  \caption{The number of islands formed from the pinch-off of a $2L\times 1$ long thin film as a function of $L$ and anisotropy strength $\beta$ for: (left) the 2-fold anisotropy $\hat{\gamma}(\theta)=1+\beta\cos 2\theta$; (right) the 3-fold anisotropy $\hat{\gamma}(\theta)=1+\beta\cos 4\theta$. The solid lines represent the 1-2 islands and 2-3 islands boundaries.}
  \label{fig:phasediagram1}
\end{figure}

\begin{figure}[htbp]
  \centering
  \includegraphics[width=1\textwidth]{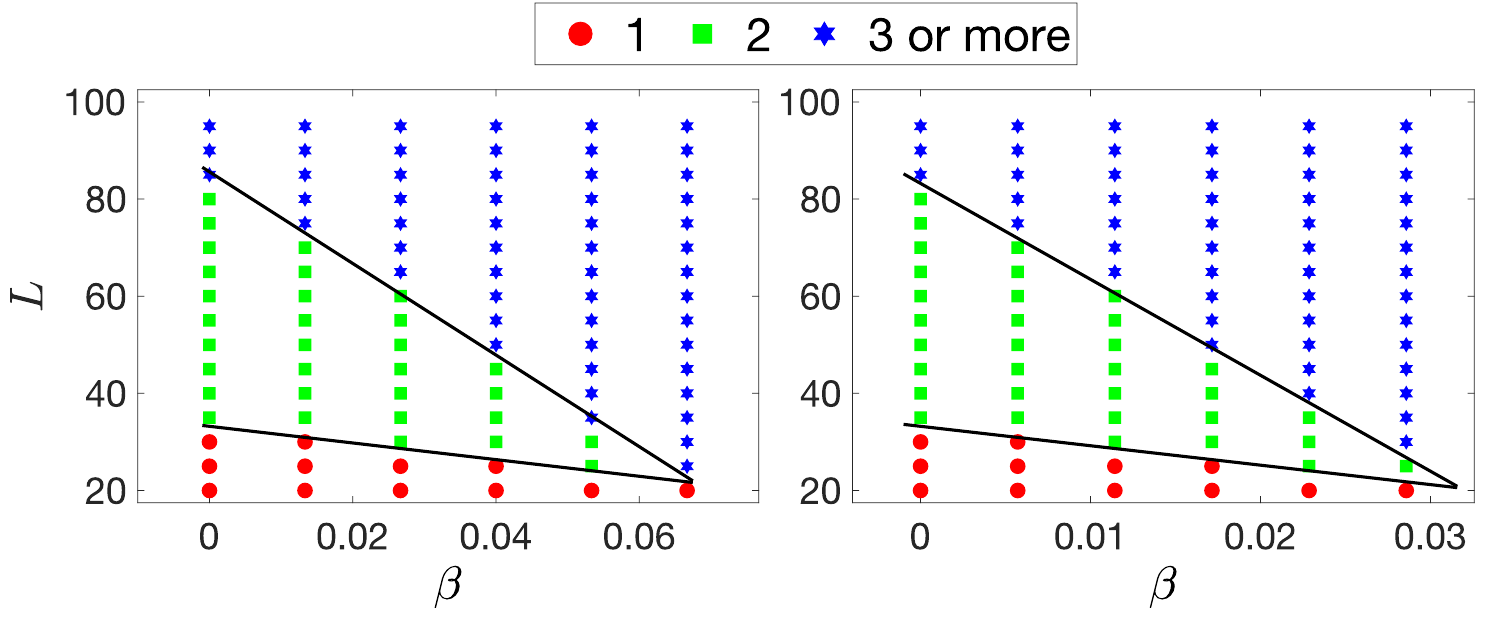}
  \caption{The number of islands formed from the pinch-off of a $2L\times 1$ long thin film as a function of $L$ and anisotropy strength $\beta$ for: (left) the 4-fold anisotropy $\hat{\gamma}(\theta)=1+\beta\cos 4\theta$; (right) the 6-fold anisotropy $\hat{\gamma}(\theta)=1+\beta\cos 6\theta$. The solid lines represent the 1-2 islands and 2-3 islands boundaries.}
  \label{fig:phasediagram2}
\end{figure}

In addition to the aspect ratio of the thin film, the anisotropy strength of the surface energy also plays a crucial role in the island formation process. We conducted experiments with varying aspect ratios and anisotropy strengths to investigate their influence on the formation of islands. The results are presented in Figure~\ref{fig:phasediagram1}-Figure~\ref{fig:phasediagram2} for commonly used $m$-fold anisotropies $\hat{\gamma}(\theta)=1+\beta \cos m\theta,\,\,m=2,3,4,6$.

For the symmetric anisotropy $\hat{\gamma}(\theta)=1+\beta\cos m\theta,\,\,m=2,4,6$, the boundaries between the 1-2 islands and 2-3 islands regions are approximated by the following linear curve fitting $L=p(m)\beta+q(m)$: \begin{itemize}
  \item 1-2 islands boundary: $p(m)=-12.23m^2+8.04m-7.86,q(m)=0.36m+31.31$;
  \item 2-3 islands boundary: $p(m)=-35.07m^2-165.40m+279.82,q(m)=0.18m+83.06$.
\end{itemize} Numerical results indicate that, for a given aspect ratio, increasing the anisotropy strength of symmetric surface energy promotes the formation of more islands. 

For the asymmetric anisotropy $\hat{\gamma}(\theta)=1+\beta\cos 3\theta$, we use a quadratic curve fitting $L=3285.70\beta^2-33.57\beta+29.11$ to determine the boundary between the 1-2 islands regions, and a linear curve fitting $L=-51.43\beta+83.21$ to identify the 2-3 islands boundary. Our numerical results show that, for smaller aspect ratios, increasing the anisotropy strength tends to produce fewer islands. This behavior is attributed to the asymmetry in surface energy. A comparison of the two sets of simulation results reveals that, for the evolution of long thin films, symmetric and asymmetric anisotropies may lead to markedly different morphological behaviors.




\section{Conclusion}\label{sec:conclusion}

We provided a detailed analysis and comparsion of the structure-preserving parametric finite element methods (SP-PFEM) for anisotropic geomertic flows. By introducing a hyperparameter $\alpha$, we are able to express all possible surface energy matrices in a unified form, thereby incorporating the SP-PFEM schemes into a single analytical framework. It is proven that $3\hat{\gamma}(\theta)-\hat{\gamma}(\theta-\pi)>0$ serves as a common energy stability condition for all surface energy matrices. In the special case where the surface energy matrix is symmetric, this condition can be improved to $3\hat{\gamma}(\theta)-\hat{\gamma}(\theta-\pi)\geq 0$, which is both necessary and sufficient for the energy stability. Apart from mesh quality, the proposed method exhibits consistent performance across different values of $\alpha$ in terms of accuracy, computational efficiency, and structure preservation, indicating a certain degree of robustness. The method presented in this paper can be easily extended to general anisotropic geometric flows, including area-conserved anisotropic curvature flow and anisotropic surface diffusion. It provides an effective framework for developing structure-preserving numerical algorithms for general anisotropic curvature-driven problems.

\section*{Acknowledgements}

We sincerely thank the reviewer for the valuable comments and suggestions.
This work was partially supported by the Ministry of Education of Singapore under its AcRF Tier 1 funding A-8003584-00-00 (W. Bao), the Alexander von Humboldt Foundation (Y. Li), the National Natural Science Foundation of China in the Division of Mathematical Sciences Project No. 12471342 (W. Ying) and Zhiyuan Honors Program for Graduate Students in Shanghai Jiao Tong University (Y. Zhang).



\printbibliography

\end{document}